\documentclass[11pt, a4paper]{amsart}
\usepackage{amsmath}
\usepackage{commath}
\usepackage{amsthm}
\usepackage{natbib}
\usepackage{amsfonts}
\usepackage[margin=1.25in]{geometry}
\usepackage{booktabs}
\usepackage{graphicx}
\usepackage{lscape}
\usepackage{setspace}
\usepackage{microtype}
\usepackage{rotating}
\newcommand{\sgn}{\mathrm{sign}}

\newtheorem{theorem}{Theorem}
\newtheorem{lemma}{Lemma}
\newtheorem{corollary}{Corollary}
\newtheorem{definition}{Definition}
\newtheorem{assumption}{Assumption}
\setlength\rotFPtop{0pt plus 1fil} 

\begin{document}

\title[Oracle Inequalities for Vector Autoregressions]{Oracle Inequalities for High Dimensional Vector Autoregressions}
\author{Anders Bredahl Kock and Laurent A.F. Callot}
\date{\today}
\thanks{We would like to thank Michael Jansson, S\o{}ren Johansen, J\o{}rgen Hoffmann-J\o{}rgensen, Marcelo Medeiros, Martin Wainwright, Timo Ter\"asvirta, participants at the workshop for statistical inference in complex/high-dimensional problems held in Vienna, the NBER-NSF 2012 time series conference and the Canadian Econometric Study Group meeting 2012, the SETA 2013, for help, comments and discussions. Financial support from the Center for Research in the Econometric Analysis of Time Series (CREATES) is gratefully acknowledged.\\
Anders Bredahl Kock is affiliated with Aarhus University and CREATES, Fuglesangs Alle 4, 8210 Aarhus V, Denmark. Laurent Callot is affiliated with VU Amsterdam, CREATES, and the Tinbergen Institute. Corresponding author: Anders Bredahl Kock, e-mail: \texttt{akock@creates.au.dk}.}
\begin{abstract}
This paper establishes non-asymptotic oracle inequalities for the prediction error and estimation accuracy of the LASSO in stationary vector autoregressive models. These inequalities are used to establish consistency of the LASSO even when the number of parameters is of a much larger order of magnitude than the sample size. We also give conditions under which no relevant variables are excluded.

Next, non-asymptotic probabilities are given for the adaptive LASSO to select the correct sparsity pattern. We then give conditions under which the adaptive LASSO reveals the correct sparsity pattern asymptotically. We establish that the estimates of the non-zero coefficients are asymptotically equivalent to the oracle assisted least squares estimator. This is used to show that the rate of convergence of the estimates of the non-zero coefficients is identical to the one of least squares only including the relevant covariates.

\vspace{.5cm}
\footnotesize
{
\noindent \textit{Key words}: VAR, LASSO, adaptive LASSO, Oracle inequality, Variable selection, Time series, Dependent data, Model selection, High-dimensional data, Dimension reduction.

\noindent \textit{JEL classifications}: C01, C02, C13, C32.
}
\end{abstract}

\maketitle

\section{Introduction}

The last 10-15 years have witnessed a surge of research in high-dimensional statistics and econometrics. This is the study of models where the the number of parameters is of a much larger order of magnitude than the sample size. However, often only a few of the parameters are non-zero, that is the model is sparse, and one wants to be able to separate these from the zero ones. In particular, a lot of attention has been devoted to penalized estimators of which the most famous is probably the LASSO of \cite{tibshirani96}. Other prominent examples are the SCAD of \cite{fanl01}, the adaptive LASSO of \cite{zou06}, the Bridge and Marginal Bridge estimators of \cite{huanghm08}, the Dantzig selector of \cite{candest07}, the Sure Independence Screening of \cite{fanl08} and the square root LASSO of \cite{bellonicw11}. These procedures have become popular since they are computationally feasible and perform variable selection and parameter estimation at the same time. For recent reviews see \cite{bühlmannvdg11} and \cite{bellonic11}. 

Much effort has been devoted to establishing the conditions under which these procedures possess the oracle property. Here the oracle property is understood as the procedure correctly detecting the sparsity pattern, i.e. setting all zero parameters \textit{exactly} equal to zero while not doing so for any of the non-zero ones. Furthermore, the non-zero parameters are estimated at the same asymptotic efficiency as if only the relevant variables had been included in the model from the outset. In other words the non-zero parameters are estimated as efficiently as if one had been assisted by an oracle that had revealed the true sparsity pattern prior to estimation.

Even though a lot of progress has been made in this direction most focus has been devoted to very simple data types such as the linear regression model with fixed covariates or sometimes (gaussian) independently distributed covariates. Some exceptions are \cite{wanglt07} and \cite{nardir11} who consider the LASSO in a stationary autoregression. \cite{canerk13} investigate the properties of the Bridge estimator in stationary and nonstationary autoregressions while \cite{kock12} does the same for the adaptive LASSO. Finally, \cite{liaop13} consider cointegrated VAR models. In particular they show how shrinkage type estimators can also be used to select the cointegration rank. However, these papers consider (vector) autoregressions of a fixed or slowly increasing length -- i.e. a low-dimensional setting.
  
In this paper we are concerned with the estimation of high-dimensional stationary vector autoregressions (VAR), i.e. models of the form
\begin{align}
y_t=\sum_{l=1}^{p_T}\Phi_ly_{t-l}+\epsilon_t,\ t=1,...,T\label{VAR1}
\end{align}
where $y_t=(y_{t,1}, y_{t,2},...,y_{t,k_T})'$ is the $k_T\times 1$ vector of variables in the model. $\Phi_1,...,\Phi_{p_T}$ are $k_T\times k_T$ parameter matrices. These may vary with $T$ though we suppress this dependence in the notation. We are analyzing a triangular array of models where the parameters may vary across the rows, $T$, but remain constant within each row, $t=1,...,T$. $\epsilon_t$ is assumed to be a sequence of $i.i.d.$ error terms with an $N_{k_T}(0,\Sigma)$ distribution. Notice that the number of variables as well as the number of lags is indexed by $T$ indicating that both of these are allowed to increase as the sample size increases -- and in particular may be larger than $T$. Equation (\ref{VAR1}) could easily be augmented by a vector of constants but here we omit this to keep the notation simple\footnote{Similarly, we conjecture that a trend could be included by writing the model in deviations from the trend. But to focus on the main idea of the results this has been omitted.}.

The VAR is without doubt one of the central pillars in macroeconometrics and is widely used for e.g. forecasting, impulse response and policy analysis. However, it suffers from the fact that many macroeconomic variables are observed at a relatively low frequency such as quarterly or annually leaving few observations for estimation. On the other hand, the number of parameters, $k_T^2p_T$, may increase very fast if many variables are included in the model which is often the case in order to ensure satisfactory modeling of the dynamics of the variables of interest. Hence, the applied researcher may find himself in a situation where the number of parameters is much larger than the number of observations. If $T<k_Tp_T$ equation by equation least squares is not even feasible since the design is singular by construction. Even if it is possible to estimate the model the number of regressions which have to be run in order to calculate the information criterion for every subset of variables increases exponentially in the number of parameters and such a procedure would become computationally infeasible. Furthermore, these subset selection criteria are known to be unstable; see e.g. \cite{breiman96}.

In a seminal paper \cite{stockw02} used factors to reduce dimensionality and obtain more precise forecasts of macro variables while \cite{bernankebe05} popularized the inclusion of factors in the VAR in order to avoid leaving out relevant information when evaluating monetary policy. For surveys on factor analysis in the context of time series see \cite{stockw06}, \cite{baing08} and \cite{stockw11}.
Our results open a different avenue of handling high-dimensionality in VAR models than augmentation by factors. 
In particular,
\begin{itemize}
\item[i)] we establish \textit{non-asymptotic} oracle inequalities for the prediction error and estimation accuracy of the LASSO. Specifically,  we show that the LASSO is almost as precise as oracle assisted least squares, i.e. least squares only including the (unknown set of) relevant variables. We also comment on some limitations of these bounds.
\item[ii)] we use the finite sample upper bounds to show that even when $k_T$ and $p_T$ increase at a subexponential rate it is possible to estimate the parameters consistently. The fact that $k_T$ may increase very fast is of particular importance for state of the art macroeconometric modeling of big systems since no variables should be left out in order to avoid omitted variable bias. 
Conditions for no relevant variables being excluded are also given.
\item[iii)] we establish \textit{non-asymptotic} lower bounds on the probability with which the adaptive LASSO unveils the correct sign pattern and use these bounds to show that the adaptive LASSO may detect the correct sign pattern with probability tending to one even when $k_T$ and $p_T$ increase at a subexponential rate\footnote{Increasing $p_T$ by one implies losing one observation for the purpose of estimation since one more initial observation is needed. Hence, we also discuss the practically relevant setting where $p_T$ increases slower than the sample size (as opposed to exponentially fast).}.
\item[iv)] we show that the adaptive LASSO is asymptotically equivalent to the oracle assisted least squares estimator. This implies that the estimates of the non-zero coefficients converge at the same rate as if least squares had been applied to a model only including the relevant covariates. Furthermore, this shows that the adaptive LASSO is asymptotically as efficient as the oracle assisted least squares estimator.
\item[v)] the appendix contains some maximal inequalities for vector autoregressions, Lemmas \ref{Bound} and \ref{BoundY}, which might be of independent interest. In addition, Lemma \ref{CovBound} gives finite sample bounds on the estimation error of high-dimensional covariance matrices in VAR models. This concentration inequality is subsequently used to show how the restricted eigenvalue condition of \cite{bickelrt09} can be verified even in the case of models with many \textit{dependent} random covariates.
\item[vi)] similar results for autoregressions follow as a special case by simply setting $k_T=1$ in our theorems. 
\item[vii)] in \cite{kockc12oracle} we show how the theory put forward in this paper can be used to analyze and forecast a large macroeconomic data set; using the 131 macroeconomic variables of \cite{ludvigson2009macro} we show that the LASSO provides forecasts that compare favourably to those obtained using alternative methods.
\end{itemize}
We believe that these results will be of much use for the applied researcher  who often faces the curse of dimensionality when building VAR models since the number of parameters increases quadratically in the number of variables included. The LASSO and the adaptive LASSO are shown to have attractive finite sample and asymptotic properties even in these situations.

\cite{Songb11} have derived bounds for the LASSO similar to the ones mentioned in i) above. However, they employ an m-dependence type assumption instead of directly utilizing the dependence structure in the VAR. Furthermore, in our understanding they are working with models of fixed dimension while we are allowing $k_T$ and $p_T$ to depend on the sample size. Also, we give a lower bound on the probability with which the restricted eigenvalue condition is satisfied. Finally, we also consider the properties of the adaptive LASSO non-asymptotically as well as asymptotically.

Note that since the LASSO and the adaptive LASSO can be estimated with fewer observations than parameters one may choose to simply include the most recent observations -- say 10-20 years -- in the model used for forecasting instead of using the whole data set. This could be useful since observations far back in time may be conjectured to be less informative about the near future than the recent past is. 

It should also be remarked that no (significance) testing is involved in the procedures but the underlying assumption is that there exists a sparse representation of the data. Finally, by iii) above, the adaptive Lasso may be used to unveil Granger (non)-causality since it can distinguish between zero and non-zero coefficients under the conditions of our theorems.

The plan of the paper is as follows. Section \ref{ModelNotation} lays out the model in more detail and gives necessary background notation. Sections \ref{SecLASSO} and \ref{SecAdaLASSO} contain the main results of the paper on the LASSO and the adaptive LASSO.
A Monte Carlo study investigating the validity of our finite sample results can be found in Section \ref{MC} while Section \ref{Conclusions} concludes. The proofs can be found in the Appendix.

\section{Model and notation}\label{ModelNotation}
We shall suppress the dependence of $k_T$ and $p_T$ on $T$ to simplify notation. Throughout the paper we shall assume
\begin{assumption}\label{Ass1}
The data $\cbr[0]{y_{t},\ t=1-p,...,T}$ is generated by (\ref{VAR1}) where $\epsilon_t$ is assumed to be a sequence of $i.i.d.$ error terms with an $N_{k}(0,\Sigma)$ distribution. Furthermore, all roots of $\envert[0]{I_k-\sum_{j=1}^p\Phi_jz^j}$ are assumed to lie outside the unit disc. 
\end{assumption}
Assumption \ref{Ass1} states the model. The gaussianity of the error terms is crucial since $y_t$ inherits this under the assumption of all roots of $\envert[0]{I_k-\sum_{j=1}^p\Phi_jz^j}$ being outside the unit disc. More precisely, the gaussianity is useful since it implies that $y_t$ and $\epsilon_t$ have slim tails\footnote{Note that \cite{belloni12sparse} derive bounds similar to ours in an IV setting without imposing gaussianity on the error terms. Instead they use moderate deviation theory. However, we have sofar been unable to apply this theory in our setting of dependent variables.}. Hence, the gaussianity of the error terms plays an important role. It is, however, a rather standard assumption. Note also that the assumption of all roots of $\envert[0]{I_k-\sum_{j=1}^p\Phi_jz^j}$ being outside the unit disc is equivalent to all eigenvalues of the companion matrix $F$ being inside the unit disc. Let $\rho$ (the dependence on $T$ is suppressed) denote the largest eigenvalue of the companion matrix. 

It is convenient to write the model in stacked form. To do so let $Z_{t}=(y_{t-1}',...,y_{t-p}')'$ be the $kp\times 1$ vector of explanatory variables at time $t$ in each equation and $X=(Z_{T},...,Z_{1})'$ the $T\times kp$ matrix of covariates for each equation. Let $y_i=(y_{T,i},...,y_{1,i})'$ be the $T\times 1$ vector of observations on the $i$th variable ($i=1,...,k$) and $\epsilon_i=(\epsilon_{T,i},...,\epsilon_{1,i})'$ the corresponding vector of error terms. Finally, $\beta_{i}^*$ is the $kp$ dimensional parameter vector of true parameters for equation $i$ which also implicitly depends on $T$. Hence, we may write (\ref{VAR1}) equivalently as
\begin{align}
y_i=X\beta_{i}^*+\epsilon_i,\ i=1,...,k.\label{VAR2}
\end{align}
Here the length of the parameter vector $\beta_{i}^*$ can potentially be of a much larger order of magnitude than the sample size $T$. A practical example occurs when building macroeconomic models based on relatively infrequently sampled time series (say quarterly or annual data). Then one will often only have 50-200 observations while for $k=100$ and $p=5$ the number of parameters per equation is as large as 500. The total number of parameters in the system is of course even larger. Traditional methods such as least squares will be inadequate in such a situation and our goal is to derive bounds on the estimation error of the LASSO in each equation as well as for the whole system.

Even though there are $k^2p$ parameters in the model only a subset of them might be non-zero. Perhaps only a few of the $kp$ variables might be relevant in each of the $k$ equations (the set may of course differ from equation to equation such that all variables are relevant in some equations). This means that $\beta_{i}^*$ is a sparse vector. 

\subsection{Further notation}
Let $J_{i}=\cbr[0]{j: \beta_{i,j}^*\neq 0}\subseteq \cbr[0]{1,...,kp}$ denote the set of non-zero parameters in equation $i$ and $s_i=|J_i|$ its cardinality. $\bar{s}=\max\cbr[0]{s_1,...,s_k}$ while $\beta_{\min,i}=\min\cbr[0]{|\beta_{i,j}^*|:j\in J_{i}}$ denotes the minimum (in absolute value) non-zero entry of $\beta_{i}^*$. $\beta_{\min}=\min\cbr[0]{\beta_{\min,i},\ i=1,...,k}$ is the smallest non-zero parameter in the whole system. 
     
For any $x\in \mathbb{R}^n$, $\enVert[0]{x}=\sqrt{\sum_{i=1}^nx_i^2}$, $\enVert[0]{x}_{\ell_1}=\sum_{i=1}^n |x_i|$ and $\enVert[0]{x}_{\ell_\infty} =\max_{1\leq i\leq n} |x_i|$ denote $\ell_2,\ \ell_1$ and $\ell_{\infty}$ norms, respectively (most often $n=kp$ or $n=s_{i}$ in the sequel). For any symmetric square matrix $M$, $\phi_{\min}(M)$ and $\phi_{\max}(M)$ denote the minimal and maximal eigenvalues of $M$.

Let $\Psi_T=\frac{1}{T}X'X$ be the $kp\times kp$ scaled Gramian of X. For $R,S\subseteq \cbr[0]{1,...,kp}$, $X_R$ and $X_S$ denote the submatrices of $X$ which consist of the columns of $X$ indexed by $R$ and $S$, respectively. Furthermore, $\Psi_{R,S}=\frac{1}{T}X_R'X_S$. For any vector $\delta$ in $\mathbb{R}^n$ and a subset $J\subseteq \left\{1,...,n\right\}$ we shall let $\delta_J$ denote the vector consisting only of those elements of $\delta$ indexed by $J$. As mentioned above, $F$ shall denote the companion matrix of the system (\ref{VAR1}) and $\rho$ its largest eigenvalue. For any natural number $j$, $F^j$ shall denote the $j$th power of $F$.

For any two real numbers $a$ and $b$, $a\vee b=\max(a,b)$ and $a\wedge b=\min(a,b)$; and for any $x\in\mathbb{R}^n$, let $\sgn(x)$ denote the sign function applied to each component of $x$.

Let $\sigma_{i,y}^2$ denote the variance of $y_{t,i}$ and $\sigma_{i,\epsilon}^2$ the variance of $\epsilon_{t,i},\ 1\leq i\leq k$. Then define $\sigma_T=\max_{1\leq i \leq k}(\sigma_{i,y}\vee \sigma_{i,\epsilon})$. 

\section{The Lasso}\label{SecLASSO}
The LASSO was proposed by \cite{tibshirani96}. Its theoretical properties have been studied extensively since then, see e.g. \cite{zhaoy06}, \cite{meinshausenb06}, \cite{bickelrt09}, and \cite{bühlmannvdg11} to mention just a few. It is known that it only selects the correct model asymptotically under rather restrictive conditions on the dependence structure of the covariates. However, we shall see that it can still serve as an effective screening device in these situations. Put differently, it can remove many irrelevant covariates while still maintaining the relevant ones and estimating the coefficients of these with high precision. We investigate the properties of the LASSO when applied to each equation $i=1,...,k$ separately\footnote{Of course it is also possible to apply the LASSO directly to the whole system and we shall also make some comments on the properties of the resulting estimator under this strategy.}. The LASSO estimates $\beta_{i}^*$ in (\ref{VAR2}) by minimizing the following objective function

\begin{align}
L(\beta_{i})=\frac{1}{T}\enVert{y_{i}-X\beta_{i}}^2+2\lambda_T\enVert{\beta_{i}}_{\ell_1}\label{LASSOobj}
\end{align}
where $\lambda_T$ is a sequence to be defined exactly below. (\ref{LASSOobj}) is the least squares objective function plus an extra term penalizing parameters that are different from zero. Let $\hat{\beta}_{i}$ denote the minimizer of (\ref{LASSOobj}) and let $J(\hat{\beta}_{i})=\cbr[0]{j:\hat{\beta}_{i,j}\neq 0}$ be the indices of the parameters for which the estimator is nonzero . 

\subsection{Results without conditions on the Gram matrix}
We begin by giving \textit{non-asymptotic} bounds on the performance of the LASSO. Notice that these bounds are valid under assumption \ref{Ass1} without any conditions on the design matrix or $k,p,s_{i}$ and $T$. 

\begin{theorem}\label{Thm1}
Let $\lambda_T=\sqrt{8\ln(1+T)^5\ln(1+k)^4\ln(1+p)^2\ln(k^2p)\sigma_T^4/T}$. Then, on a set with probability at least $1-2(k^2p)^{1-\ln(1+T)}-2(1+T)^{-1/A}$ the following inequalities hold for all $i=1,...,k$ for some positive constant $A$.\footnote{At the cost of a slightly more involved expression on the lower bound on the probability with which the expressions hold, $\lambda_T$ may be reduced to $\sqrt{8\ln(1+T)^{3+\delta}\ln(1+k)^4\ln(1+p)^2\ln(k^2p)\sigma_T^2/T}$ for any $\delta>0$. This remark is equally valid for all theorems in the sequel.\label{smalllambda}}
\begin{align}
&\frac{1}{T}\enVert[1]{X\hat{\beta_{i}}-X\beta_{i}^*}^2+\lambda_T\enVert[1]{\hat{\beta_{i}}-\beta_{i}^*}_{\ell_1}\leq 2\lambda_T\left(\enVert[1]{\hat{\beta_{i}}-\beta_{i}^*}_{\ell_1}+\enVert[1]{\beta_{i}^*}_{\ell_1}-\enVert[1]{\hat{\beta}_{i}}_{\ell_1}\right)\label{IQ1}\\
&\frac{1}{T}\enVert[1]{X\hat{\beta}_{i}-X\beta^*_i}^2+\lambda_T\enVert[1]{\hat{\beta}_{i}-\beta_{i}^*}_{\ell_1}
\leq 
4\lambda_T\left[\enVert[1]{\hat{\beta}_{i,J_{i}}-\beta^*_{i,J_{i}}}_{\ell_1}\wedge\enVert[1]{\beta^*_{i,J_{i}}}_{\ell_1}\right]\label{IQ2}\\
&\enVert[1]{\hat{\beta}_{i,{J^c_i}}-\beta^*_{i,{J^c_i}}}_{\ell_1}\leq 3\enVert[1]{\hat{\beta}_{i,J_{i}}-\beta^*_{i,J_{i}}}_{\ell_1}\label{IQ3}
\end{align}
\end{theorem}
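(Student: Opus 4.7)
The plan is to follow the standard LASSO basic inequality argument, but with the noise part controlled by the VAR-specific maximal inequalities promised in the appendix. The cornerstone is the event
\begin{align*}
\mathcal{A}_T=\left\{\max_{1\le i\le k}\,\max_{1\le j\le kp}\frac{2}{T}\left|X_j'\epsilon_i\right|\le \lambda_T\right\}.
\end{align*}
First I would show $P(\mathcal{A}_T)\ge 1-2(k^2p)^{1-\ln(1+T)}-2(1+T)^{-1/A}$. This is where Lemmas \ref{Bound} and \ref{BoundY} enter: each coordinate $X_j'\epsilon_i/T$ is a sum of products $y_{t-l,m}\epsilon_{t,i}$ which form a martingale difference sequence with sub-exponential summands (product of a marginally Gaussian covariate and an independent-of-the-past Gaussian innovation). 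The maximal inequalities, together with a union bound over the at most $k\cdot kp$ coordinates, produce the stated two-term tail, with the $(1+T)^{-1/A}$ piece absorbing the event on which the Gaussian moment bounds controlling $\sigma_T$ fail.

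On $\mathcal{A}_T$ I would derive (\ref{IQ1}) from the defining optimality of $\hat\beta_i$:
\begin{align*}
\frac{1}{T}\enVert{y_i-X\hat\beta_i}^2+2\lambda_T\enVert{\hat\beta_i}_{\ell_1}\le \frac{1}{T}\enVert{y_i-X\beta_i^*}^2+2\lambda_T\enVert{\beta_i^*}_{\ell_1}.
\end{align*}
Substituting $y_i=X\beta_i^*+\epsilon_i$, expanding the square and rearranging gives
\begin{align*}
\frac{1}{T}\enVert{X(\hat\beta_i-\beta_i^*)}^2\le \frac{2}{T}\epsilon_i'X(\hat\beta_i-\beta_i^*)+2\lambda_T\big(\enVert{\beta_i^*}_{\ell_1}-\enVert{\hat\beta_i}_{\ell_1}\big).
\end{align*}
Hölder's inequality bounds $\tfrac{2}{T}\epsilon_i'X(\hat\beta_i-\beta_i^*)\le \lambda_T\enVert{\hat\beta_i-\beta_i^*}_{\ell_1}$ on $\mathcal{A}_T$. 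Adding $\lambda_T\enVert{\hat\beta_i-\beta_i^*}_{\ell_1}$ to both sides yields (\ref{IQ1}) exactly.

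For (\ref{IQ2}) and (\ref{IQ3}) I would exploit that $\beta_i^*$ is supported on $J_i$, so $\enVert{\beta_i^*}_{\ell_1}=\enVert{\beta_{i,J_i}^*}_{\ell_1}$ and $\enVert{\hat\beta_i-\beta_i^*}_{\ell_1}=\enVert{\hat\beta_{i,J_i}-\beta_{i,J_i}^*}_{\ell_1}+\enVert{\hat\beta_{i,J_i^c}}_{\ell_1}$. Splitting $\enVert{\hat\beta_i}_{\ell_1}=\enVert{\hat\beta_{i,J_i}}_{\ell_1}+\enVert{\hat\beta_{i,J_i^c}}_{\ell_1}$ and applying the reverse triangle inequality shows the parenthesized term on the right of (\ref{IQ1}) is bounded by $2\enVert{\hat\beta_{i,J_i}-\beta_{i,J_i}^*}_{\ell_1}$, producing the first branch of (\ref{IQ2}). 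The second branch follows from the alternative bound $\enVert{\hat\beta_{i,J_i}-\beta_{i,J_i}^*}_{\ell_1}+\enVert{\beta_{i,J_i}^*}_{\ell_1}-\enVert{\hat\beta_{i,J_i}}_{\ell_1}\le 2\enVert{\beta_{i,J_i}^*}_{\ell_1}$, which is another triangle-inequality manipulation; then take the minimum of the two upper bounds. Finally, subtracting $\lambda_T\enVert{\hat\beta_{i,J_i}-\beta_{i,J_i}^*}_{\ell_1}$ from both sides of the just-established inequality $\tfrac{1}{T}\enVert{X(\hat\beta_i-\beta_i^*)}^2+\lambda_T\enVert{\hat\beta_i-\beta_i^*}_{\ell_1}\le 4\lambda_T\enVert{\hat\beta_{i,J_i}-\beta_{i,J_i}^*}_{\ell_1}$, dropping the non-negative quadratic term, and using $\hat\beta_{i,J_i^c}-\beta_{i,J_i^c}^*=\hat\beta_{i,J_i^c}$ delivers the cone condition (\ref{IQ3}).

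The bulk of the work, and the only genuinely hard step, is the tail bound for $\mathcal{A}_T$. The difficulty is that the summands $y_{t-l,m}\epsilon_{t,i}$ are heteroscedastic in $t$, only sub-exponential, and depend on the whole history of innovations through the moving-average representation guaranteed by the eigenvalue condition in Assumption \ref{Ass1}; consequently the variance and the Orlicz norms depend on $\rho$ and $\sigma_T$ in a way that must be tracked carefully to produce the explicit $\ln(1+T)^5$ and $\ln(1+k)^4$ factors in $\lambda_T$. Once that probabilistic input is granted, (\ref{IQ1})--(\ref{IQ3}) are deterministic consequences of the LASSO subgradient condition and the sparsity of $\beta_i^*$.
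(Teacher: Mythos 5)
Your proposal is correct and follows essentially the same route as the paper: the same basic-inequality/H\"older argument on the event $\cbr[0]{\max_i\enVert[0]{X'\epsilon_i/T}_{\ell_\infty}\leq\lambda_T/2}$ for the three deterministic inequalities, and the same probabilistic strategy for that event (truncated martingale-difference concentration plus an Orlicz-norm maximal inequality to control the truncation failure, yielding the two-term tail). The only minor slip is invoking Lemma \ref{BoundY}, which concerns the $y_{t-l,i}y_{t-\tilde l,j}$ cross-products and is not needed here; Theorem \ref{Thm1} rests solely on Lemmas \ref{Orlicz} and \ref{Bound}.
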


The lower bound on the probability with which inequalities (\ref{IQ1})-(\ref{IQ3}) hold can be increased by choosing a larger value of $\lambda_T$. However, we shall see in Theorem \ref{Thm3} below that smaller values of $\lambda_T$ yield faster rates of convergence\footnote{In general, there is a tradeoff between $\lambda_T$ being small and the lower bound on the probability with which inequalities (\ref{IQ1})-(\ref{IQ3}) hold being large.}. 

It is important to notice that (\ref{IQ1})-(\ref{IQ3}) hold for all equations $i=1,...,k$ on one and the same set. This will turn out to be useful when deriving bounds on the estimation error for the whole system. Theorem \ref{Thm1} holds \textit{without any assumptions} on the Gram matrix. Furthermore, the lower bound on the probability with which inequalities (\ref{IQ1})-(\ref{IQ3}) hold is \textit{nonasymptotic} -- it holds for every $T$ -- and the above inequalities hold for \textit{any} configuration of $k,p,s_{i}$ and $T$. Note that the lower bound on the probability with which the estimates hold tends to one as $T\rightarrow\infty$.
In the course of the proof of Theorem \ref{Thm1} we derive a maximal inequality, Lemma \ref{Bound} in the appendix, for vector autoregressions which might be of independent interest. This inequality is rather sharp in the sense that it can be used to derive a rate of convergence of $\hat{\beta}_i,\ i=1,...,k$ which is within a logarithmic factor of the optimal $\sqrt{T}$ convergence rate. 


Inequalities (\ref{IQ1}) and (\ref{IQ2}) give immediate upper bounds on the prediction error, $\frac{1}{T}\enVert[1]{X\hat{\beta}_{i}-X\beta_{i}^*}^2$, as well as the estimation accuracy, $\enVert[0]{\hat{\beta}_{i}-\beta_{i}^*}_{\ell_1}$ of the LASSO. In particular, we shall use (\ref{IQ2}) to derive oracle inequalities for these two quantities in Theorem \ref{Thm3} below. Equation (\ref{IQ3}) is also of interest in its own right since it shows that an upper bound on the estimation error of the non-zero parameters in an equation will result in an upper bound on the estimation error of the zero parameters of that equation. This is remarkable since there may be many more zero parameters than non-zero ones in a sparsity scenario and since the bound does not depend on the relative size of the two groups of parameters.

The main contribution of theorem \ref{Thm1} is proving a lower bound on the probability of the event $\mathcal{B}_T=\cbr[0]{\max_{1\leq i\leq k}\enVert[0]{\frac{1}{T}X'\epsilon_i}_{\ell_\infty}\leq \lambda_T/2}$ for the choice of $\lambda_T$ made in Theorem \ref{Thm1}. In fact $\mathcal{B}_T$ is exactly the set referred to in this theorem and the lower bound on $P(\mathcal{B}_T)$ is $1-2(k^2p)^{1-\ln(1+T)}-2(1+T)^{-1/A}$ which is a lower bound on the probability with which the inequalities in Theorem \ref{Thm1} valid. Inequalities of the type (\ref{IQ1})-(\ref{IQ3}) are not new, see e.g. \cite{bickelrt09} or \cite{rigollett11}. What is novel is that they can be valid with high probability in a time series setting, i.e. the set $\mathcal{B}_T$ can have a high probability even in the presence of dependence.



\subsection{Restricted eigenvalue condition}
Theorem \ref{Thm1} did not pose any conditions on the (scaled) Gram matrix $\Psi_T$.  
If $kp>T$ the Gram matrix $\Psi_T$ is singular, or equivalently,
\begin{align}
\min_{\delta\in \mathbb{R}^{kp}\setminus\left\{0\right\}} \frac{\delta'\Psi_T\delta}{\enVert{\delta}^2}
=0.\label{singular}
\end{align}
In that case ordinary least squares is infeasible. However, for the LASSO \cite{bickelrt09} observed that the minimum in (\ref{singular}) can be replaced by a minimum over a much smaller set. The same is the case for the LASSO in the VAR since we have written the VAR as a regression model. In particular, we shall make use of the following restricted eigenvalue condition.
 
\begin{definition}[Restricted eigenvalue condition]
The restricted eigenvalue condition RE($r$) is said to be satisfied for some $1\leq r\leq kp$ if  
\begin{align}
\kappa_{\Psi_T}^2(r)=\min_{\substack{R\subseteq\cbr[0]{1,...,kp}\\ |R|\leq r}}\min_{\substack{ \delta\in \mathbb{R}^{kp}\setminus\left\{0\right\}\\ \enVert{\delta_{{R}^c}}_{\ell_1}\leq 3\enVert{\delta_{R}}_{\ell_1}}}\frac{\delta'\Psi_T\delta}{\enVert{\delta_{R}}^2}>0\label{REs}
\end{align}
where $R\subseteq \left\{1,...,kp\right\}$ and $|R|$ is its cardinality. 
\end{definition}
Instead of minimizing over all of $\mathbb{R}^{kp}$, the minimum in (\ref{REs}) is restricted to those vectors which satisfy $\enVert{\delta_{{R}^c}}_{\ell_1}\leq 3\enVert{\delta_{R}}_{\ell_1}$ and where $R$ has cardinality at most $r$. This implies that $\kappa_{\Psi_T}^2(r)$ in (\ref{REs}) can be larger than the minimized Rayleigh-Ritz ratio in (\ref{singular}) even when the latter is zero.

Notice that the restricted eigenvalue condition is trivially satisfied if $\Psi_T$ has full rank since $\delta_{R}'\delta_{R}\leq \delta'\delta$ for every $\delta\in \mathbb{R}^{kp}$ and so,
\begin{align*}
\frac{\delta'\Psi_T\delta}{\enVert{\delta_{R}}^2}
\geq 
\frac{\delta'\Psi_T\delta}{\enVert{\delta}^2}
\geq
\min_{\delta\in \mathbb{R}^{kp}\setminus\left\{0\right\}} \frac{\delta'\Psi_T\delta}{\enVert{\delta}^2}>0.
\end{align*}
This means that in the traditional setting of fewer variables per equation than observations the restricted eigenvalue condition is satisfied if $X'X$ is nonsingular. Hence, the results are applicable in this setting but also in many others. 
We shall be using the restricted eigenvalue condition with $r=s_{i}$ and denote $\kappa_{\Psi_T}^2(s_i)$ by $\kappa_{\Psi_{T,i}}^2$.

Let $\Gamma=E(\Psi_T)=E(Z_tZ_t')$ be the population covariance matrix of the data. We will assume that the corresponding restricted eigenvalue (defined like $\kappa_{\Psi_T}^2(s_i)$ in (\ref{REs})) $\kappa_{i}=\kappa_{\Gamma}(s_i)$ is strictly positive for all $i=1,...,k$. Note that this is satisfied in particular under the standard assumption that $\Gamma$ has full rank\footnote{Note that the full rank of the population covariance matrix $\Gamma$ is independent of the fact that one might have more variables than observations -- a fact which implies $\phi_{\min}(\Psi_T)=0$.}. To get useful bounds on the estimation error of the LASSO it turns out to be important that $\kappa_{\Psi_{T,i}}^2$ is not too small (in particular strictly positive). We show that this is the case (Lemma \ref{vdGB} in the appendix) as long as the random matrix $\Psi_T$ is sufficiently close to its expectation $\Gamma$. Hence, verifying the restricted eigenvalue condition is a question of showing that $\Psi_T$ is close to $\Gamma$ with high probability. To this end, Lemma \ref{CovBound} in the appendix gives finite sample bounds on the maximum entrywise distance between $\Psi_T$ and $\Gamma$ that hold with high probability. This result might be of independent interest in the theory of high-dimensional covariance estimation for dependent gaussian processes.
Lemma \ref{REbound} in the appendix uses this result to show that for any $0<q<1$ one has $P(\kappa_{\Psi_T,i}^2>q\kappa^2_{i})\geq 1-\pi_q(s_i)$ where $\pi_q(s_i)=4k^2p^2\exp\del[2]{\frac{-\zeta T}{s_i^2\log(T)(\log(k^2p^2)+1)}}+2(k^2p^2)^{1-\log(T)}$ for $\zeta=\frac{(1-q)^2\kappa_{i}^4}{4\cdot 16^3(\enVert[0]{\Gamma}\sum_{i=0}^T\enVert[0]{F^{i}})^2}$. The exponential decay of the first term of $\pi_q(s)$ hints at the fact that the restricted eigenvalue condition can be valid asymptotically even in high-dimensional systems. This will be explored in more detail in the next subsection. On the other hand, the probability of $\kappa_{\Psi_T,i}^2>q\kappa^2_{i}$ might be low in finite samples if, for example, $k$ or $p$ are very large. However, this underscores the conventional wisdom that one has to be careful with putting too much emphasis on the asymptotic results since these can be very misleading in finite samples.

The LASSO satisfies the following oracle inequalities in VAR models.

\begin{theorem}\label{Thm3}
Let $\lambda_T$ be as in Theorem \ref{Thm1} and $0<q<1$. Then with probability at least $1-2(k^2p)^{1-\ln(1+T)}-2(1+T)^{-1/A}-\pi_q(s_i)$ the following inequalities hold for all $i=1,...,k$ for some positive constant $A$.
\begin{align}
\frac{1}{T}\enVert[1]{X\hat{\beta}_{i}-X\beta_{i}^*}^2
&\leq 
\frac{16}{q\kappa_{i}^2}s_{i}\lambda_T^2\label{REIQ1}\\
\enVert[1]{\hat{\beta}_{i}-\beta_{i}^*}_{\ell_1}
&\leq
\frac{16}{q\kappa_{i}^2}s_{i}\lambda_T\label{REIQ2}
\end{align}
Furthermore, with at least the same probability as above, no relevant variables will be excluded from equation $i$ if $\beta_{\min,i}>\frac{16}{q\kappa_{i}^2}s_{i}\lambda_T$. Finally, all the above statements hold on one and the same set which has probability at least $1-2(k^2p)^{1-\ln(1+T)}-2(1+T)^{-1/A}-\pi_q(\bar{s})$. 
\end{theorem}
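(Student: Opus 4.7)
The plan is to fuse the deterministic inequalities (\ref{IQ1})--(\ref{IQ3}) from Theorem~\ref{Thm1}, which hold on the event $\mathcal B_T$, with the high-probability control on the restricted eigenvalue given by Lemma~\ref{REbound}. On the intersection of those two events the argument is entirely deterministic, and a short convex-analysis calculation converts the basic inequality (\ref{IQ2}) into the oracle bounds (\ref{REIQ1})--(\ref{REIQ2}); the probabilities are then handled by a union bound.

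Fix $i$ and set $\delta=\hat\beta_i-\beta_i^*$. The cone condition (\ref{IQ3}) places $\delta$ in the set over which the RE condition is imposed with $R=J_i$, so on $\{\kappa_{\Psi_T,i}^2>q\kappa_i^2\}$ one has
\begin{align*}
\tfrac{1}{T}\enVert{X\delta}^2=\delta'\Psi_T\delta\geq\kappa_{\Psi_T,i}^2\enVert{\delta_{J_i}}^2>q\kappa_i^2\enVert{\delta_{J_i}}^2.
\end{align*}
I would then combine this with (\ref{IQ2}) and Cauchy--Schwarz, $\enVert{\delta_{J_i}}_{\ell_1}\leq\sqrt{s_i}\enVert{\delta_{J_i}}$, to obtain
\begin{align*}
\tfrac{1}{T}\enVert{X\delta}^2+\lambda_T\enVert{\delta}_{\ell_1}\;\leq\;4\lambda_T\sqrt{s_i}\,\enVert{\delta_{J_i}}\;\leq\;\tfrac{4\lambda_T\sqrt{s_i}}{\kappa_{\Psi_T,i}}\cdot\tfrac{\enVert{X\delta}}{\sqrt T},
\end{align*}
and split the right-hand side via $2ab\leq \tfrac12 a^2+2b^2$ to absorb $\tfrac{1}{2T}\enVert{X\delta}^2$ back into the left. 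This leaves $\tfrac{1}{2T}\enVert{X\delta}^2+\lambda_T\enVert{\delta}_{\ell_1}\leq 8\lambda_T^2 s_i/\kappa_{\Psi_T,i}^2<8\lambda_T^2 s_i/(q\kappa_i^2)$, and dropping either summand on the left yields (\ref{REIQ1}) and (\ref{REIQ2}) with the stated constant $16/(q\kappa_i^2)$.

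The no-false-exclusion statement is then immediate: if some $j\in J_i$ were set to zero by the LASSO, then $|\beta_{i,j}^*|=|\hat\beta_{i,j}-\beta_{i,j}^*|\leq\enVert{\delta}_{\ell_1}\leq 16s_i\lambda_T/(q\kappa_i^2)<\beta_{\min,i}$, contradicting the definition of $\beta_{\min,i}$. For the ``single set'' statement at the end of the theorem, the key point is that the RE event is not really equation-specific: by Lemma~\ref{vdGB}, a bound on $\enVert{\Psi_T-\Gamma}_\infty$ that is tight enough for the largest sparsity $\bar s$ simultaneously guarantees $\kappa_{\Psi_T,i}^2>q\kappa_i^2$ for every $i$, since the tolerance required on $\enVert{\Psi_T-\Gamma}_\infty$ is monotone in the sparsity level and $\pi_q(s)$ is increasing in $s$. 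Hence a single event of probability at least $1-\pi_q(\bar s)$ controls all $i$ at once, and intersecting with $\mathcal B_T$ produces the stated joint probability.

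The main obstacle I foresee is purely technical: tracking the AM--GM constants carefully enough to recover the factor $16$, and verifying that the equation-wise RE events in Lemma~\ref{REbound} can indeed be replaced by the single sparsity-$\bar s$ event -- a step that relies on the monotonicity of both $\kappa_{\Psi_T}^2(\cdot)$ and $\kappa_\Gamma^2(\cdot)$ together with the fact that the sufficient entrywise-perturbation threshold coming out of Lemma~\ref{CovBound} only tightens as $s$ grows.
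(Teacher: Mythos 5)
Your proposal is correct and follows essentially the same route as the paper's proof: you combine the inequalities of Theorem \ref{Thm1} on $\mathcal{B}_T$ with the restricted-eigenvalue control of Lemma \ref{REbound} on $\mathcal{C}_T$, use the Cauchy--Schwarz step $\enVert{\delta_{J_i}}_{\ell_1}\leq\sqrt{s_i}\enVert{\delta_{J_i}}$ together with the cone condition (\ref{IQ3}), argue the no-exclusion claim by the same contradiction, and justify the single-set claim via the inclusion $\mathcal{C}_T(\bar s)\subseteq\mathcal{C}_T(s_i)$ exactly as in (\ref{incl}). The only (harmless) deviation is the final algebra: the paper drops the $\ell_1$ term, divides through by $\enVert{X\delta}/\sqrt{T}$ and squares, then derives (\ref{REIQ2}) from (\ref{REIQ1}), whereas you absorb the prediction term by AM--GM; your route in fact yields the slightly sharper constant $8/(q\kappa_i^2)$ for (\ref{REIQ2}), which implies the stated bound a fortiori.
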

Notice that as in Theorem \ref{Thm1} the bounds are non-asymptotic. Inequality (\ref{REIQ1}) gives an upper bound on the prediction error compared to the hypothetical situation with knowledge of the true parameter vector. The more the restricted eigenvalue $\kappa_{i}$ is bounded away from zero, the smaller the upper bound on the prediction error. On the other hand, the prediction error is increasing in the number of non-zero parameters $s_{i}$. Finally, the prediction error is increasing in $\lambda_T$ but recall that $\lambda_T=\sqrt{8\ln(1+T)^5\ln(1+k)^4\ln(1+p)^2\ln(k^2p)\sigma_T^4/T}$ which implies $\lambda_T$ will be small for $\sigma_T$, $k$ and $p$ small and $T$ large. In the classical setting where $k,\ p$ and $s_i$ are fixed while $\kappa_i$ and $\sigma_T$ are bounded from below and above, respectively it is seen that the upper bound in (\ref{REIQ1}) is of order $O(\log(T)^5/T)$ which tends to zero almost as fast as $1/T$. A more detailed discussion of the role of $\sigma_T$, $k$ and $p$ can be found in the discussion following Corollary \ref{LASSOAsym} below.

Note that inequalities of the type in Theorem \ref{Thm3} are similar in spirit to the ones in Theorem 7.2 in \cite{bickelrt09}. Our main contribution here is to show that the restricted eigenvalue condition can be satisfied with high probability (and, as mentioned in connection with Theorem \ref{Thm1}, that $\mathcal{B}_T$ has high probability as well) in a time series context. This is not entirely trivial since $\kappa^2_{\Psi_T,i}$ is a rather involved function of many dependent variables. 

Inequality (\ref{REIQ2}) gives an upper bound on the estimation error of the LASSO. In the classical setting of a fixed model discussed above the right hand side is of order $O\del[1]{[\log(T)^{5}/T]^{1/2}}$ which tends to zero at a rate of almost $1/\sqrt{T}$. To shed further light on (\ref{REIQ2}) Lemma \ref{OLSO} below gives a corresponding result for the least squares estimator \textit{only including the relevant variables} -- i.e. least squares after the true sparsity pattern has been revealed by an oracle. To this end let $\beta_{OLS,i}$ denote the least squares estimator of $\beta_{i}^*$ only including the relevant variables.

\begin{lemma}\label{OLSO}
Let $\tilde{\lambda}_{T,i}=\sqrt{8\ln(1+T)^5\ln(1+s_i)^2\ln(s_i)\sigma_T^4/T}$ and $0<q<1$. If the true sparsity pattern is known and only the relevant variables are included in the model with their coefficients estimated by least squares equation by equation,
\begin{align}
\enVert[1]{\hat{\beta}_{OLS,i}-\beta^*_{i,J_{i}}}_{\ell_1}\leq \frac{\tilde{\lambda}_{T,i}}{2q\phi_{\min}(\Gamma_{J_{i},J_{i}})}s_{i}\label{OLSIN}
\end{align}
for all $i=1,...,k$ on a set with probability at least $1-2s_i^{1-\ln(1+T)}-2(1+T)^{-1/A}-\pi_q(s_i)$. 
\end{lemma}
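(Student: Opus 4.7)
The plan is to exploit the closed-form expression for the restricted least squares estimator. Since only the $s_i$ relevant columns of $X$ enter,
\begin{equation*}
\hat{\beta}_{OLS,i}-\beta^*_{i,J_{i}}=(X_{J_i}'X_{J_i}/T)^{-1}(X_{J_i}'\epsilon_i/T),
\end{equation*}
so the proof reduces to controlling two quantities: the smallest eigenvalue of the $s_i\times s_i$ empirical Gram matrix $\Psi_{T,J_i,J_i}$ and the $\ell_\infty$ norm of the cross product $X_{J_i}'\epsilon_i/T$. To convert the target $\ell_1$ bound into these objects I would apply Cauchy--Schwarz twice,
\begin{align*}
\enVert[1]{\hat{\beta}_{OLS,i}-\beta^*_{i,J_{i}}}_{\ell_1}
&\leq \sqrt{s_i}\,\enVert[1]{\hat{\beta}_{OLS,i}-\beta^*_{i,J_{i}}} \\
&\leq \frac{\sqrt{s_i}}{\phi_{\min}(\Psi_{T,J_i,J_i})}\,\enVert[1]{X_{J_i}'\epsilon_i/T} \\
&\leq \frac{s_i}{\phi_{\min}(\Psi_{T,J_i,J_i})}\,\enVert[1]{X_{J_i}'\epsilon_i/T}_{\ell_\infty}.
\end{align*}

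Next I would bound $\enVert[1]{X_{J_i}'\epsilon_i/T}_{\ell_\infty}$ by rerunning the maximal inequality of Lemma \ref{Bound} with the union bound taken only over the $s_i$ columns of $X$ indexed by $J_i$, rather than over all $k^2p$ coordinates. The same argument that produced the logarithmic factor $\ln(1+k)^4\ln(1+p)^2\ln(k^2p)$ in $\lambda_T$ now produces $\ln(1+s_i)^2\ln(s_i)$, and it yields
\begin{equation*}
\enVert[1]{X_{J_i}'\epsilon_i/T}_{\ell_\infty}\leq \tilde{\lambda}_{T,i}/2
\end{equation*}
on a set of probability at least $1-2s_i^{1-\ln(1+T)}-2(1+T)^{-1/A}$.

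Then I would bound $\phi_{\min}(\Psi_{T,J_i,J_i})$ from below by $q\,\phi_{\min}(\Gamma_{J_i,J_i})$ using the covariance--concentration machinery behind Lemma \ref{REbound}. By Weyl's inequality, $|\phi_{\min}(\Psi_{T,J_i,J_i})-\phi_{\min}(\Gamma_{J_i,J_i})|$ is controlled by the operator norm $\enVert[0]{\Psi_{T,J_i,J_i}-\Gamma_{J_i,J_i}}_{op}\leq s_i\enVert[0]{\Psi_T-\Gamma}_{\max}$, and the entrywise concentration inequality of Lemma \ref{CovBound} forces the latter to be small with probability at least $1-\pi_q(s_i)$. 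Substituting the resulting lower bound for $\phi_{\min}(\Psi_{T,J_i,J_i})$ and the upper bound for $\enVert[1]{X_{J_i}'\epsilon_i/T}_{\ell_\infty}$ into the display above yields (\ref{OLSIN}), and a union bound over the two events gives the stated probability $1-2s_i^{1-\ln(1+T)}-2(1+T)^{-1/A}-\pi_q(s_i)$.

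The main obstacle is not algebraic but bookkeeping: one must verify that both concentration arguments specialize cleanly from the full $kp$-dimensional problem to the $s_i$-dimensional subproblem indexed by $J_i$. The maximal inequality must be rerun with a union bound of size $s_i$ to produce the smaller constant $\tilde{\lambda}_{T,i}$, and the perturbation bound must be stated for the ordinary minimum eigenvalue of the principal submatrix rather than for the restricted eigenvalue $\kappa_{\Psi_T,i}^2$. Neither adjustment is substantive, because both rely on the same union-bound-plus-concentration machinery applied to the reduced index set $J_i$ in place of $\{1,\dots,kp\}$.
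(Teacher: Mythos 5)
Your proposal is correct and follows essentially the same route as the paper's proof: the closed form of the restricted OLS estimator, the chain $\ell_1\le\sqrt{s_i}\,\ell_2$ plus the operator-norm bound $1/\phi_{\min}(\Psi_{J_i,J_i})$ and $\ell_2\le\sqrt{s_i}\,\ell_\infty$, the maximal inequality of Lemma \ref{Bound} rerun over only the $s_i$ relevant cross-products to get $\tilde{\lambda}_{T,i}/2$, and the covariance-concentration event $\mathcal{C}_T$ to replace $\phi_{\min}(\Psi_{J_i,J_i})$ by $q\phi_{\min}(\Gamma_{J_i,J_i})$. Your use of Weyl's inequality for the eigenvalue perturbation is just a repackaging of the argument behind part ii) of Lemma \ref{REbound}, so there is no substantive difference.
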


Comparing (\ref{REIQ2}) to (\ref{OLSIN}) one notices that the upper bounds are very similar. Both expressions consist of $s_{i}$ multiplied by some term. Clearly this term is smaller for oracle assisted least squares, $\frac{\tilde{\lambda}_{T,i}}{2q\phi_{\min}(\Gamma_{J_{i},J_{i}})}$, than for the LASSO, $\frac{16\lambda_T}{q\kappa_{i}^2}$, since $\tilde{\lambda}_{T,i}\leq\lambda_T$ and $\kappa_{i}^2\leq \phi_{\min}(\Psi_{J_{i},J_{i}})$. However, $\lambda_T$ need not be much larger than $\tilde{\lambda}_{T,i}$ even if $kp$ is a lot larger than $s_i$ since the logarithmic function increases very slowly. Hence, it is reasonable to call (\ref{REIQ2}) an oracle inequality since it shows, in a non-asymptotic manner, that the LASSO performs almost as well as if one had known the true sparsity pattern and estimated the non-zero parameters by least squares. 

Also notice that the upper bounds on the $\ell_1$-estimation error in (\ref{REIQ2}) trivially yield upper bounds on the $\ell_p$-estimation error for any $p\geq 1$ since $\enVert[0]{\cdot}_{\ell_p}\leq \enVert[0]{\cdot}_{\ell_1}$ for any $1\leq p\leq \infty$. This observation is equally valid for all $\ell_1$-bounds in the sequel.


The last statement of Theorem \ref{Thm3} says that under the "beta-min" condition $\beta_{\min,i}>\frac{16}{q\kappa_{i}^2}s_{i}\lambda_T$ no relevant variables will be left out of the model. It is sensible that the beta-min condition is needed in order to be able to distinguish zero from non-zero parameters since the condition basically requires the two groups to be sufficiently separated -- the non-zero coefficients cannot be too close to zero. In particular, they must be bounded away from zero by a little more than the upper bound on the $\ell_1$ estimation error of the LASSO estimator. 

The following corollary to Theorem \ref{Thm3} gives upper bounds on the performance of the LASSO for the whole system.
\begin{corollary}\label{Corthm3}
Let $\lambda_T$ be as in Theorem \ref{Thm1} and $0<q<1$. Then, for some positive constant $A$
\begin{align}
\sum_{i=1}^k\enVert[1]{\hat{\beta}_{i}-\beta_{i}^*}_{\ell_1}
&\leq
\sum_{i=1}^{k}\frac{16}{q\kappa_i^2}s_{i}\lambda_T\label{REIQcor2}
\end{align}
with probability at least $1-2(k^2p)^{1-\ln(1+T)}-2(1+T)^{-1/A}-\pi_q(\bar{s})$ .
\end{corollary}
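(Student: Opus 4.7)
The plan is to recognize that this corollary is essentially a direct consequence of the joint version of Theorem \ref{Thm3}. The last sentence of Theorem \ref{Thm3} already asserts that the individual bound \eqref{REIQ2} holds \emph{simultaneously} for all $i=1,\ldots,k$ on one and the same event $\mathcal{E}_T$ whose probability is at least $1-2(k^2p)^{1-\ln(1+T)}-2(1+T)^{-1/A}-\pi_q(\bar{s})$. So once we work on $\mathcal{E}_T$, there is nothing left to do probabilistically: summing the $k$ equation-by-equation $\ell_1$ bounds is deterministic.

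Concretely, the steps I would carry out are as follows. First, invoke Theorem \ref{Thm3} in its joint form to obtain, on $\mathcal{E}_T$, the bound
\begin{align*}
\enVert[1]{\hat{\beta}_i-\beta_i^*}_{\ell_1}\leq \frac{16}{q\kappa_i^2}s_i\lambda_T,\qquad i=1,\ldots,k.
\end{align*}
Second, sum these inequalities over $i=1,\ldots,k$ to get
\begin{align*}
\sum_{i=1}^k\enVert[1]{\hat{\beta}_i-\beta_i^*}_{\ell_1}\leq \sum_{i=1}^k\frac{16}{q\kappa_i^2}s_i\lambda_T\qquad\text{on } \mathcal{E}_T,
\end{align*}
which is precisely \eqref{REIQcor2}. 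The probability statement transfers automatically because the event $\mathcal{E}_T$ does not depend on $i$.

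The only subtlety worth noting — and it is really a bookkeeping point, not an obstacle — is that the tail term in the probability bound is $\pi_q(\bar{s})$ rather than a sum $\sum_i\pi_q(s_i)$ or a single $\pi_q(s_i)$. This is already built into Theorem \ref{Thm3}, where the joint restricted eigenvalue event is controlled uniformly by $\pi_q(\bar{s})$ via monotonicity of $\pi_q$ in its argument (since $s_i\leq \bar{s}$ implies $\pi_q(s_i)\leq \pi_q(\bar{s})$, the worst equation governs the union-bounded failure probability). Hence no additional union bound over $i$ is required and the stated probability level is preserved. There is no real main obstacle here; the corollary is essentially a one-line consequence of the joint statement in Theorem \ref{Thm3}.
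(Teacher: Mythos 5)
Your proposal is correct and matches the paper's proof, which is exactly the one-liner you describe: sum (\ref{REIQ2}) over $i=1,\ldots,k$, noting that by the last assertion of Theorem \ref{Thm3} all $k$ inequalities hold simultaneously on a single event of probability at least $1-2(k^2p)^{1-\ln(1+T)}-2(1+T)^{-1/A}-\pi_q(\bar{s})$. Your bookkeeping remark about why $\pi_q(\bar{s})$ suffices is also consistent with how the paper handles it (via the inclusion $\mathcal{C}_T(\bar{s})\subseteq\mathcal{C}_T(s_i)$ established at the end of the proof of Theorem \ref{Thm3}).
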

Corollary \ref{Corthm3} only gives an upper bound on the estimation error for the whole system since the systemwise counterparts of the other bounds in Theorem \ref{Thm3} are obtained in the same way. As can be expected the upper bound on the estimation error of the whole system is increasing in the number of variables. In the next section we shall investigate exactly how fast the size of the model can grow if one still wants systemwise consistency. 



\textbf{Remark 1:} Even though $kp$ can be a lot larger than $T$ the parameter $\beta_{i}^*$ in Theorem \ref{Thm3} is still uniquely defined since RE($s_{i}$) is assumed valid. This follows from an observation similar to observation 2 page 1721 in \cite{bickelrt09}.   

\textbf{Remark 2:} The above bounds also yield corresponding results for univariate autoregressions, i.e. for $k=1$. These follow trivially by setting $k=1$ in all the above bounds. 

\textbf{Remark 3:}\footnote{We wish to thank an anonymous referee for suggesting this remark.} The above models do not contain any exogenous variables. However, we conjecture that the results generalize to models with exogenous variables as long as these are stationary, sub-gaussian\footnote{A random variable $Z$ is said to be sub-gaussian if there exist positive constants $C$ and $K$ such that $P(|Z|\geq z)\leq K\exp(-Cz^2)$ for all $z>0$.} and are independent of the error terms since then these variables have the properties of the $y_t$ that are essential for the above analysis. To the extent that factors satisfy these assumptions these can also be included.

\subsection{Asymptotic properties of the Lasso}
All preceding results are for finite samples. In this section we utilize these results to describe the asymptotic properties of the LASSO as $T\rightarrow\infty$ to get a feeling for what size of models can be handled by our theory. 

\begin{theorem}\label{LASSOAsym}
Let $k,p\in O(e^{T^a})$ and $s_i\in O(T^b)$ for some $a,b\geq 0$. Assume that $7a+2b<1$ and that there exists a constant $c>0$ such that $\kappa^2_{i}\geq c$. Then, if $\sup_T\sigma_T,\ \sup_T\enVert[0]{\Gamma}\sum_{i=0}^T\enVert[0]{F^{i}}<\infty$, one has for $i=1,...,k$ as $T\rightarrow\infty$

\begin{itemize}
\item[i)]  $\frac{1}{T}\enVert[1]{X\hat{\beta}_{i}-X\beta_{i}^*}^2\rightarrow 0$ in probability
\item[ii)] $\enVert[0]{\hat{\beta}_{i}-\beta_{i}^*}_{\ell_1}\rightarrow 0$ in probability
\item[iii)] With probability tending to one no relevant variables will be excluded from the model if there exists a $T_0\geq 1$ such that $\beta_{\min,i}>\frac{16}{qc^2}s_{i}\lambda_T$ for all $T\geq T_0$. 
\end{itemize} 
\end{theorem}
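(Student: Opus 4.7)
The plan is to invoke Theorem~\ref{Thm3} directly: on the event of probability at least $1-2(k^2p)^{1-\ln(1+T)}-2(1+T)^{-1/A}-\pi_q(s_i)$, the two deterministic bounds (\ref{REIQ1}) and (\ref{REIQ2}) hold, together with the beta-min statement. So the whole theorem reduces to (a) showing that the right-hand sides of (\ref{REIQ1}) and (\ref{REIQ2}) tend to zero under the stated growth rates, and (b) showing that the probability bound tends to one. The uniform control $\sup_T\sigma_T<\infty$, $\sup_T\|\Gamma\|\sum_{j=0}^T\|F^j\|<\infty$, and $\kappa_i^2\geq c$ are precisely what make these deterministic orders and the constant $\zeta$ appearing in $\pi_q(s_i)$ behave uniformly in $T$.

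First I would compute the order of $\lambda_T$. With $k,p\in O(e^{T^a})$ one has $\ln(1+k)^4\ln(1+p)^2\ln(k^2p)\in O(T^{7a})$, so $\lambda_T^2\in O(T^{7a-1}\ln(1+T)^5)$ and $\lambda_T\in O(T^{(7a-1)/2}\ln(1+T)^{5/2})$. Using $\kappa_i^2\geq c$ and $s_i\in O(T^b)$, the bound in (\ref{REIQ1}) is of order $T^{7a+b-1}\ln(1+T)^5$ and the bound in (\ref{REIQ2}) is of order $T^{(7a+2b-1)/2}\ln(1+T)^{5/2}$. The condition $7a+2b<1$ makes the $\ell_1$ bound vanish, and since $b\geq 0$ it also forces $7a+b<1$, hence the prediction error bound vanishes too. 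This gives (i) and (ii) modulo the probability statement. For (iii), since $\kappa_i^2\geq c$ the beta-min condition of Theorem~\ref{Thm3} is implied by the assumed lower bound on $\beta_{\min,i}$, so the conclusion is inherited verbatim on the same event.

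The main obstacle, and the only step that requires real care, is showing that $\pi_q(s_i)\to 0$; the other two probability terms are easy. For $2(k^2p)^{1-\ln(1+T)}=2\exp(\ln(k^2p)(1-\ln(1+T)))$, one uses $\ln(k^2p)\in O(T^a)$ together with $\ln(1+T)\to\infty$ so the factor $1-\ln(1+T)$ dominates and the expression vanishes; the term $2(1+T)^{-1/A}$ is immediate. For the second summand in $\pi_q(s_i)$, namely $2(k^2p^2)^{1-\log T}$, the same argument applies.

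The delicate summand is $4k^2p^2\exp\bigl(-\zeta T/[s_i^2\log(T)(\log(k^2p^2)+1)]\bigr)$. The assumptions $\kappa_i^2\geq c$ and $\sup_T\|\Gamma\|\sum_j\|F^j\|<\infty$ yield a uniform lower bound $\zeta\geq\zeta_0>0$. The exponent is then at least of order $T^{1-a-2b}/\ln(1+T)$, while $\log(4k^2p^2)\in O(T^a)$. So the summand behaves like $\exp\bigl(O(T^a)-\zeta_0' T^{1-a-2b}/\ln(1+T)\bigr)$, which tends to zero provided $1-a-2b>a$, i.e.\ $2a+2b<1$. Since $a\geq 0$, the hypothesis $7a+2b<1$ is strictly stronger than $2a+2b<1$, so this holds. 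Combining the three estimates gives $1-2(k^2p)^{1-\ln(1+T)}-2(1+T)^{-1/A}-\pi_q(s_i)\to 1$, and the convergence-in-probability conclusions in (i), (ii), and the "no relevant variables excluded" statement in (iii) follow at once from Theorem~\ref{Thm3}.
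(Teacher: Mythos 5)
Your proposal is correct and follows essentially the same route as the paper: it reduces everything to Theorem \ref{Thm3}, checks that $s_i\lambda_T\to 0$ (hence both right-hand sides vanish) under $7a+2b<1$, and verifies that the probability bound tends to one, with the only nontrivial step being $\pi_q(s_i)\to 0$ under $2a+2b<1$ — which is exactly what the paper delegates to Lemma \ref{CTasym}. The sole cosmetic difference is that you inline that lemma's computation rather than citing it.
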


Theorem \ref{LASSOAsym} shows that the parameters of each equation can be estimated consistently even when the number of variables is very large.
If one is only interested in the average prediction error tending to zero in probability, it suffices that $7a+b<1$. In either case $p$ and $k$ can increase at a sub-exponential rate -- and at the same time the number of relevant variables can arrive at a polynomial rate. The setting where the total number of parameters increases sub-exponentially in the sample size is often referred to as ultra-high or non-polynomial dimensionality. By choosing $a$ sufficiently close to 0, it is clear that any $b<1/2$ can be accommodated (the number of non-zero coefficients per equation cannot increase faster than the square root of the sample size) while still having the estimation error tending to zero in probability.

It may seem unreasonable to assume that the number of lags increases at an exponential rate in the sample size since this leads to having (asymptotically) more initial observations than there are observations in the sample. In the perhaps more realistic setting where $k$ increases at a sub-exponential rate while $p$ stays fixed, it can be shown that it suffices that $5a+2b<1$ to estimate $\beta^*_i$ consistently. When the sample is split into $p$ initial observations and $T-p$ observations for estimation the proof of Theorem \ref{LASSOAsym} reveals that for $p\in O(T^d)$ for some $0<d<1$ Theorem \ref{LASSOAsym} remains valid if $5a+2b<1$. It actually makes no difference whether we assume $p$ fixed or $p$ to increase at a rate slower than the sample size\footnote{Of course $p$ can not increase faster than the sample size in the setting where we set aside $p$ observations as initial observations and have $T-p$ observations left for estimation.}. The intuition behind this result is that $T-p$ is of the same order as $T$ in this setting so asymptotically it makes no difference whether $T-p$ or $T$ observations are available. Furthermore, it relies on the observation that $p$ only enters as a logarithm in the important places.


iii) gives a sufficient condition for avoiding excluding relevant variables asymptotically. 
The "beta-min" condition is necessary in the sense that one can not expect to be able to distinguish the non-zero coefficients from the zero ones if these are too close to each other. However, it should be noted that such an assumption rules out that the non-zero coefficients tend to zero too fast. On the other hand, it should also be said that the "beta-min" condition is not used in parts i) and ii) of Theorem \ref{LASSOAsym} -- it is only used to ensure that no relevant variables are excluded.

At this stage it is worth mentioning that the conditions in Theorem \ref{LASSOAsym} are merely sufficient. For example one can loosen the assumption of the boundedness of the suprema $\sup_T\sigma_T$ and $\sup_T\enVert[0]{\Gamma}\sum_{i=0}^T\enVert[0]{F^{i}}$ by tightening $7a+2b<1$. We have only discussed the cases of $p$ increasing exponentially fast or $p$ constant above. Other settings in between those two extremes are of course also relevant. Also notice that the beta-min condition in iv) is satisfied in particular if there exists a constant $\hat{c}>0$ such that $\beta_{\min,i}\geq \hat{c}$ since $s_i\lambda_T\rightarrow 0$ by (\ref{slambdaorder}) in the appendix. 

Furthermore, it is of interest to investigate the behavior of the LASSO estimator as the largest root of the companion matrix $F$, denoted $\rho$, tends to the boundary of the unit disc. The next theorem deals with the estimation error in a non-triangular array model (the model stays constant over time $T$).

\begin{theorem}\label{Lassoloc}
Let $k,p$ and $s$ be fixed and assume that there exists a constant $c>0$ such that $\kappa_i^2>c$ and that $F$ has distinct eigenvalues. Then, letting $\rho=1-\frac{1}{T^\alpha}$ with $\alpha<1/4$, one has for all $i=1,...,k$ 
\begin{align*}
\enVert[0]{\hat{\beta}_{i}-\beta_{i}^*}_{\ell_1}\rightarrow 0 \text{ in probability}
\end{align*}
\end{theorem}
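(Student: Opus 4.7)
The strategy is to apply the finite-sample $\ell_1$ oracle inequality (\ref{REIQ2}) from Theorem \ref{Thm3} and then carefully track how every quantity in both the bound and its controlling probability scales with $\rho=1-T^{-\alpha}$. Since $k,p,s$ are fixed and $\kappa_i^2\geq c>0$, Theorem \ref{Thm3} collapses to
\begin{equation*}
\enVert[1]{\hat{\beta}_{i}-\beta_{i}^*}_{\ell_1}\leq \frac{16 s}{qc}\lambda_{T},
\end{equation*}
valid with probability at least $1-2(k^{2}p)^{1-\ln(1+T)}-2(1+T)^{-1/A}-\pi_{q}(s)$. The first two terms of this probability bound vanish trivially as $T\to\infty$, so the work reduces to showing (a) $s\lambda_T\to 0$ and (b) $\pi_q(s)\to 0$ under the assumption $\alpha<1/4$.

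First I would control the scaling of $\sigma_T$, $\enVert[0]{\Gamma}$, and $\sum_{j=0}^T \enVert[0]{F^j}$ in $\rho$. The distinct-eigenvalue assumption yields a diagonalization $F=PDP^{-1}$, whence $\enVert[0]{F^j}\leq \enVert[0]{P}\enVert[0]{P^{-1}}\rho^j$. Geometric summation gives $\sum_{j=0}^T \enVert[0]{F^j}=O(1/(1-\rho))=O(T^\alpha)$. Via the MA$(\infty)$ representation $y_t=\sum_{j\geq 0} \Psi_j \epsilon_{t-j}$, the variance of $y_{t,i}$ is $O(1/(1-\rho^2))=O(T^\alpha)$, so $\sigma_T^2=O(T^\alpha)$ and the largest entry of $\Gamma$, hence $\enVert[0]{\Gamma}$, is also $O(T^\alpha)$. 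Consequently $\enVert[0]{\Gamma}\sum_{j=0}^T \enVert[0]{F^j}=O(T^{2\alpha})$.

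Plugging $\sigma_T^4=O(T^{2\alpha})$ into $\lambda_T=\sqrt{8\ln(1+T)^5\ln(1+k)^4\ln(1+p)^2\ln(k^2p)\sigma_T^4/T}$ gives $\lambda_T=O(T^{\alpha-1/2}(\log T)^{5/2})$, so $s\lambda_T\to 0$ whenever $\alpha<1/2$; in particular for $\alpha<1/4$, establishing (a). For (b), the second term $2(k^2p^2)^{1-\log T}$ of $\pi_q(s)$ is negligible. The first term equals $4k^2p^2\exp(-\zeta T/[s^2\log(T)(\log(k^2p^2)+1)])$ with $\zeta=\Omega((\enVert[0]{\Gamma}\sum_{j=0}^T\enVert[0]{F^j})^{-2})=\Omega(T^{-4\alpha})$, so the exponent is $-\Omega(T^{1-4\alpha}/\log T)$, which diverges to $-\infty$ precisely when $\alpha<1/4$. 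This is exactly where the threshold $\alpha<1/4$ enters. Combining (a) and (b) delivers the claim.

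\textbf{Main obstacle.} The delicate ingredient is the bound $\enVert[0]{F^j}\leq C\rho^j$ with a constant $C$ that does not itself blow up with $T$, since the eigenvalues of $F$ implicitly vary with $T$ to force $\rho=1-T^{-\alpha}$. One needs to argue that $P$ can be chosen so that $\enVert[0]{P}\enVert[0]{P^{-1}}$ either stays bounded or grows at most at some polynomial rate in $T$ that can be absorbed into the $O(T^{\alpha})$ estimates without breaking the margin $\alpha<1/4$. Once this bookkeeping is in place, the conclusion is immediate from combining the two convergences above on the described high-probability event.
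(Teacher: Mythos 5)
Your proposal is correct and follows essentially the same route as the paper's proof: invoke (\ref{REIQ2}), bound $\sigma_T^4\in O(T^{2\alpha})$ and $(\enVert[0]{\Gamma}\sum_{j=0}^T\enVert[0]{F^j})^2\in O(T^{4\alpha})$ via the diagonalization $F=V^{-1}DV$ and geometric summation, and observe that $\alpha<1/4$ is exactly what makes $\pi_q(s)\to 0$. The ``main obstacle'' you flag is resolved in the paper simply by declaring the model (hence $F$, $V$, and $C_F$) fixed rather than a triangular array, so $\enVert[0]{V}\enVert[0]{V^{-1}}$ is a constant by fiat; the authors explicitly concede that the varying-$F$ case is not covered by their techniques.
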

Theorem \ref{Lassoloc} shows that as long as the rate with which $\rho\to 1$ is slower than $T^{-1/4}$, consistent estimation of $\beta^*_i$ is possible for all $1\leq i\leq  k$ even in the local to unity setting. $\rho$ can not tend to one too fast for the following reasons. Firstly, $\max_{1\leq i\leq k}\sigma_{y,i}$ increases without bound as $\rho$ tends to the boundary of the unit disc. This increases $\sigma_T$ and hence $\lambda_T$ in (\ref{REIQ2}). Secondly, and as can be seen from the proof of the theorem, the probability with which (\ref{REIQ2}) is valid is also decreasing in $\rho$.  The case where the model is allowed to change with the sample size ($k,p$ and $s$ not fixed) is much more subtle since the proof of Theorem \ref{Lassoloc} uses the diagonalization $F=V^{-1}DV$, where $V$ is a matrix whose columns are the linearly independent eigenvectors of $F$ and $D$ is a diagonal matrix containing the eigenvalues of $F$, to conclude that for a \textit{fixed} $F$ there exists a $C_F>1$ such $\enVert[0]{F^j}\leq C_F\rho^j$.  In general $C_F$ depends on $F$ and hence the case where the model, and hence $F$, is allowed to vary with the sample size is not covered by our techniques. Of course, the fixed $F$ case still gives a useful upper bound on the speed with which $\rho$ can tend to one since it can not be faster in the general case of varying $F$.\footnote{The assumption of $F$ having distinct eigenvalues is sufficient, though not necessary, for the diagonalization $F=V^{-1}DV$ to exist. This diagonalization is convenient since it yields $\enVert[0]{F^j}\leq C_F\rho^j$. Alternatively, a Jordan decomposition could be applied but this would not give a diagonal $D$ and hence $\enVert[0]{F^j}$ can not as easily be bounded in terms of $\rho^j$.}

Regarding systemwise consistency, we have the following result which is a consequence of Corollary \ref{Corthm3}.

\begin{theorem}\label{systemlassoasym}
Let $p\in O(e^{T^a})$, $k\in O(T^b)$ and $\bar{s}\in O(T^c)$ for some $a,b,c\geq 0$. Assume that $3a+2b+2c<1$ and that there exists a constant $d>0$ such that $\kappa_{i}\geq d$. If $\sup_T\sigma_T,\ \sup_T\enVert[0]{\Gamma}\sum_{i=0}^T\enVert[0]{F^{i}}<\infty$ one has that as $T\rightarrow \infty$
$\sum_{i=1}^k\enVert[0]{\hat{\beta}_{i}-\beta_{i}^*}_{\ell_1}\rightarrow 0$ in probability.
\end{theorem}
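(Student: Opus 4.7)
The plan is to apply Corollary \ref{Corthm3} directly and then verify that (a) the deterministic upper bound on the right-hand side tends to zero, and (b) the lower bound on the probability tends to one, both under the rate condition $3a+2b+2c<1$ together with $\kappa_i\ge d$ and $\sup_T\sigma_T,\ \sup_T\enVert[0]{\Gamma}\sum_{i=0}^T\enVert[0]{F^i}<\infty$.

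For the deterministic bound, I would first use $\kappa_i\ge d$ to obtain
\begin{equation*}
\sum_{i=1}^k\frac{16}{q\kappa_i^2}s_i\lambda_T \le \frac{16}{qd^2}\,k\bar s\,\lambda_T.
\end{equation*}
Inserting $p\in O(e^{T^a})$ and $k\in O(T^b)$ into the definition of $\lambda_T$ from Theorem \ref{Thm1}, we have $\ln(1+p)^2\in O(T^{2a})$, $\ln(1+k)^4\in O((\ln T)^4)$, $\ln(k^2p)\in O(T^a\vee \ln T)$, and $\sigma_T^4$ is bounded. Consequently $\lambda_T^2\in O(T^{3a-1}(\ln T)^9)$, so $\lambda_T\in O(T^{(3a-1)/2}(\ln T)^{9/2})$, and therefore
\begin{equation*}
k\bar s\,\lambda_T\in O\!\bigl(T^{(3a+2b+2c-1)/2}(\ln T)^{9/2}\bigr),
\end{equation*}
which tends to zero exactly under $3a+2b+2c<1$.

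For the probability, I would verify each of the three subtracted terms in Corollary \ref{Corthm3} tends to zero separately. The term $2(k^2p)^{1-\ln(1+T)}$ equals $2\exp\{(1-\ln(1+T))\ln(k^2p)\}$, and since $\ln(k^2p)\ge 1$ eventually while $1-\ln(1+T)\to-\infty$, the exponent diverges to $-\infty$. The term $2(1+T)^{-1/A}$ trivially vanishes. The remaining term $\pi_q(\bar s)$ will be the main obstacle because $4k^2p^2$ grows like $e^{2T^a}T^{2b}$, so the first summand of $\pi_q(\bar s)$ is a product of a potentially exploding prefactor and a shrinking exponential, and we must argue that the shrinking factor wins.

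Concretely, under our assumptions $\zeta$ is bounded away from zero (using $\kappa_i\ge d$ and the uniform bound on $\enVert[0]{\Gamma}\sum_{i=0}^T\enVert[0]{F^i}$). The denominator inside the exponential of $\pi_q(\bar s)$ is of order $\bar s^2\ln(T)\ln(k^2p^2)\in O(T^{a+2c}\ln T)$, so the exponent becomes $-\zeta T^{1-a-2c}/\ln T$. Comparing with the prefactor, the first summand of $\pi_q(\bar s)$ is of order $\exp\bigl(2T^a+2b\ln T-\zeta T^{1-a-2c}/\ln T\bigr)$, which tends to zero provided $1-a-2c>a$, i.e.\ $2a+2c<1$. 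The hypothesis $3a+2b+2c<1$ implies $2a+2c\le 3a+2b+2c<1$, so this holds. The second summand of $\pi_q(\bar s)$ is handled exactly as $2(k^2p)^{1-\ln(1+T)}$ above. Combining these verifications with Corollary \ref{Corthm3} and noting that convergence of the deterministic bound together with convergence of the probability to one yields convergence in probability, the conclusion follows.
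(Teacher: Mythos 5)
Your proposal is correct and follows essentially the same route as the paper: invoke Corollary \ref{Corthm3}, show $k\bar{s}\lambda_T\to 0$ by inserting the growth rates of $k$, $p$, $\bar{s}$ into $\lambda_T$ (which is exactly how the condition $3a+2b+2c<1$ arises), and verify that the probability bound tends to one. Your explicit treatment of $\pi_q(\bar s)$ — showing the exponent wins over the prefactor $4k^2p^2$ because $2a+2c<1$ — simply fills in the step the paper delegates to ``an argument similar to the one in Lemma \ref{CTasym}'', so there is no substantive difference.
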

Theorem \ref{systemlassoasym} reveals that the requirements for systemwise consistency are a lot stricter than the equationwise ones. In particular, $k$ can now only increase at a polynomial rate as opposed to the sub-exponential rate in Theorem \ref{LASSOAsym}. However, it is sensible that the number of equations cannot increase too fast if one wishes the sum of the estimation errors to vanish asymptotically. 

If $k,p$ and $s$ are fixed numbers (not depending on $T$) as in the classical setting then $\sum_{i=1}^k\enVert[0]{\hat{\beta}_{i}-\beta_{i}^*}_{\ell_1}\in O_p(\lambda_T)=O_p(\sqrt{\ln(1+T)^5/T})$ revealing that the rate of convergence is almost $\sqrt{T}$. While the logarithmic factor can be lowered (see also footnote \ref{smalllambda}) we don't think it is possible to remove it altogether (using the techniques in this paper). In the case where $p$ is fixed, corresponding to $a=0$, one only needs $2b+2c<1$ in order to obtain systemwise consistency.

Bounds on the systemwise prediction error and total number of variables selected can be obtained in a similar fashion as the bounds on the estimation error for the whole system. Again the case $k=1$ gives results corresponding to univariate autoregressions.

At this point we should also mention the possibility of using the Group Lasso of \cite{yuanl06}\footnote{We would like to thank an anonymous referee for pointing this out.}. The idea here is that variables can be grouped and that entire groups of variables could be irrelevant and hence the concept of group sparsity arises. Natural groups in the VAR in (\ref{VAR1}) could be the parameter matrices $\cbr[0]{\Phi_l}_{l=1}^p$ corresponding to a certain lag length being irrelevant. Or, alternatively the groups could be the parameters of lags of a certain variable corresponding to this variable being irrelevant in case all coefficients are zero. The adaptive group Lasso for VAR models has been studied in an asymptotic framework in \cite{kockc12oracle}.

\section{The adaptive Lasso}\label{SecAdaLASSO}
The LASSO penalizes all parameters equally. If it were possible to penalize the truly zero parameters more than the non-zero ones, one would expect a better performance. \cite{zou06} used this idea to propose the adaptive LASSO in the standard linear regression model with a fixed number of non-random regressors. He established that the adaptive LASSO is asymptotically oracle efficient in this setting -- with probability tending to one it selects the correct sparsity pattern. We now apply the adaptive LASSO to our vector autoregressive model. We shall give lower bounds on the finite sample probabilities of selecting the correct model. Then, these bounds are used to establish that with probability tending to one the correct sparsity pattern (and a little bit more) is unveiled. 

The adaptive LASSO estimates $\beta_{i}^*$ by minimizing the following objective function
\begin{align}
\tilde{L}(\beta_{i})=\frac{1}{T}\enVert{y_{i}-X_{J(\hat{\beta}_{i})}\beta_{i,J(\hat{\beta}_{i})}}^2+2\lambda_T\sum_{j\in J(\hat{\beta}_{i})}\frac{|\beta_{i,j}|}{|\hat{\beta}_{i,j}|},\ i=1,...,k\label{ALASSOobj}
\end{align}
where $\hat{\beta}_{i,j}$ denotes the LASSO estimator of $\beta^*_{i,j}$ from the previous section. Let $\tilde{\beta}_{i}$ denote the minimizer of (\ref{ALASSOobj}). Note that if $\hat{\beta}_{i,j}=0$ the $j$'th variable is excluded from the $i$th equation. If the first stage LASSO estimator classifies a parameter as zero it is not included in the second step resulting in a problem of a much smaller size. If $\beta_{i,j}^*=0$ then $\hat{\beta}_{i,j}$ is likely to be small by (\ref{REIQ2}) and consistency of the LASSO. Hence, $1/\hat{\beta}_{i,j}$ is large and the penalty on $\beta_{i,j}$ is large. If  $\beta_{i,j}^*\neq 0$, $\hat{\beta}_{i,j}$ is not too close to zero and the penalty is small. In short, the adaptive LASSO is a two step estimator with which greater penalties are applied to the truly zero parameters. These more intelligent weights allow us to derive conditions under which the adaptive LASSO is sign consistent, i.e. it selects the correct sign pattern. This in particular implies that the correct sparsity pattern is chosen.

Even though we use the LASSO as our initial estimator, this is not necessary. All we shall make use of is the upper bound on its $\ell_1$-estimation error. Hence, the results in Theorem \ref{AdaLasso} below can be improved if an estimator with tighter bounds is used. Furthermore, we have chosen the same value of $\lambda_T$ in (\ref{ALASSOobj}) as in (\ref{LASSOobj}). This is not necessary but it turns out that this is a convenient choice since in particular (\ref{AdaLasso1}) below becomes simpler. 

The first Theorem gives lower bounds on the \textit{finite sample probability} of the adaptive LASSO being sign-consistent. 

\begin{theorem}\label{AdaLasso}
Let $\lambda_T$ be as above and assume that\footnote{It suffices that $\beta_{\min,i}>\enVert[0]{\hat{\beta}_{i}-\beta_{i}^*}_{\ell_1}$ such that $\beta_{\min,i}\geq q\enVert[0]{\hat{\beta}_{i}-\beta_{i}^*}_{\ell_1}$ for some $q>1$.} $\beta_{\min,i}\geq 2\enVert[0]{\hat{\beta}_{i}-\beta_{i}^*}_{\ell_1}$ and
\begin{align}
&\frac{s_{i}K_T}{q\phi_{\min}(\Gamma_{J_{i},J_{i}})}\left(\frac{1}{2}+\frac{2}{\beta_{\min,i}}\right)\enVert[0]{\hat{\beta}_{i}-\beta_{i}^*}_{\ell_1}+\frac{\enVert[0]{\hat{\beta}_{i}-\beta_{i}^*}_{\ell_1}}{2}
\leq 
1 \label{AdaLasso1}\\
&\frac{\sqrt{s_{i}}}{q\phi_{\min}(\Gamma_{J_{i},J_{i}})}\left(\frac{\lambda_T}{2}+\frac{2\lambda_T}{\beta_{\min,i}}\right)
\leq
\beta_{\min,i}\label{AdaLasso2}
\end{align}
where $K_T=\ln(1+k)^2\ln(1+p)^2\ln(T)\sigma_T^2$. Then, on a set with probability at least $1-2(k^2p)^{1-\ln(1+T)}-2(1+T)^{-1/A}-2T^{-1/A}-\pi_q(s_i)$ it holds that $\sgn(\tilde{\beta}_{i})=\sgn(\beta_{i}^*)$ for all $i=1,...,k$.
\end{theorem}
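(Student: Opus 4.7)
The plan is a primal–dual witness (oracle candidate) argument. Because $|\hat{\beta}_{i,j}-\beta^*_{i,j}|\leq \|\hat{\beta}_i-\beta_i^*\|_{\ell_1}$, the hypothesis $\beta_{\min,i}\geq 2\|\hat{\beta}_i-\beta_i^*\|_{\ell_1}$ forces $|\hat{\beta}_{i,j}|\geq \beta_{\min,i}/2$ for every $j\in J_i$; hence $J_i\subseteq J(\hat{\beta}_i)$ and the adaptive weight satisfies $1/|\hat{\beta}_{i,j}|\leq 2/\beta_{\min,i}$ on $J_i$. Sign agreement is automatic on $J_i^c\setminus J(\hat{\beta}_i)$ (both $\beta^*_{i,j}$ and $\tilde{\beta}_{i,j}$ vanish), so only $J(\hat{\beta}_i)$ needs attention.

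Next, build a candidate $\tilde{\beta}_i$ supported on $J_i$: set $\tilde{\beta}_{i,j}=0$ for $j\notin J_i$ and take $\tilde{\beta}_{i,J_i}$ as the minimizer of (\ref{ALASSOobj}) restricted to vectors supported on $J_i$. On the event $\{\phi_{\min}(\Psi_{J_i,J_i})\geq q\phi_{\min}(\Gamma_{J_i,J_i})\}$, which has probability at least $1-\pi_q(s_i)$ by the concentration argument behind Lemma \ref{REbound}, the stationarity condition yields
\begin{equation*}
\tilde{\beta}_{i,J_i}-\beta_{i,J_i}^* \;=\; \Psi_{J_i,J_i}^{-1}\!\left[\tfrac{1}{T}X_{J_i}'\epsilon_i \;-\; \lambda_T\,\sgn(\tilde{\beta}_{i,J_i})/|\hat{\beta}_{i,J_i}|\right].
\end{equation*}
Intersecting with $\mathcal{B}_T=\{\max_i\|\tfrac{1}{T}X'\epsilon_i\|_{\ell_\infty}\leq \lambda_T/2\}$ from Theorem \ref{Thm1} and using $\||\hat{\beta}_{i,J_i}|^{-1}\|_{\ell_2}\leq 2\sqrt{s_i}/\beta_{\min,i}$ gives
\begin{equation*}
\|\tilde{\beta}_{i,J_i}-\beta_{i,J_i}^*\|_{\ell_\infty} \;\leq\; \frac{\sqrt{s_i}}{q\phi_{\min}(\Gamma_{J_i,J_i})}\!\left(\frac{\lambda_T}{2}+\frac{2\lambda_T}{\beta_{\min,i}}\right),
\end{equation*}
which is at most $\beta_{\min,i}$ by (\ref{AdaLasso2}). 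Hence $\sgn(\tilde{\beta}_{i,j})=\sgn(\beta^*_{i,j})$ for every $j\in J_i$, so the sign pattern used on the right-hand side of the stationarity equation is self-consistent.

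The final and most delicate step is verifying the strict KKT subgradient inequality $|\tfrac{1}{T}X_j'(y_i-X_{J_i}\tilde{\beta}_{i,J_i})|<\lambda_T/|\hat{\beta}_{i,j}|$ for each $j\in J(\hat{\beta}_i)\setminus J_i$; combined with strict convexity of $\tilde L$ on the active support this forces $\tilde{\beta}_{i,j}=0$ on those coordinates and identifies the candidate as the unique minimizer of (\ref{ALASSOobj}). Writing $y_i-X_{J_i}\tilde{\beta}_{i,J_i}=\epsilon_i+X_{J_i}(\beta^*_{i,J_i}-\tilde{\beta}_{i,J_i})$, the first piece contributes at most $\lambda_T/2$ on $\mathcal{B}_T$. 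For the second, the plan is to introduce a uniform entrywise bound $K_T\geq \max_{j,l}|(\Psi_T)_{j,l}|$ via the Gaussian maximal inequality behind Lemma \ref{Bound} (this contributes the $2T^{-1/A}$ term to the probability), giving $|\tfrac{1}{T}X_j'X_{J_i}(\beta^*_{i,J_i}-\tilde{\beta}_{i,J_i})|\leq K_T\|\tilde{\beta}_{i,J_i}-\beta^*_{i,J_i}\|_{\ell_1}$. Plugging in the $\ell_1$ analogue of the previous display and using $|\hat{\beta}_{i,j}|\leq \|\hat{\beta}_i-\beta_i^*\|_{\ell_1}$ (which holds because $\beta^*_{i,j}=0$ for $j\in J_i^c$), division by $\lambda_T$ reduces the required inequality precisely to (\ref{AdaLasso1}).

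A union bound over $i=1,\ldots,k$ and over the three events — $\mathcal{B}_T$, $\{\phi_{\min}(\Psi_{J_i,J_i})\geq q\phi_{\min}(\Gamma_{J_i,J_i})\}$ and $\{\max_{j,l}|(\Psi_T)_{j,l}|\leq K_T\}$ — yields the stated lower bound on the probability. The hard part is the last of these: securing the uniform entrywise bound $K_T$ over all $(kp)^2$ entries of $\Psi_T$ with the advertised probability requires reusing the Gaussian concentration machinery of Lemma \ref{Bound} uniformly, and then tracking the $\log$-factors so that the algebra telescopes cleanly into the statement (\ref{AdaLasso1}).
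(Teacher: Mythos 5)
Your proposal is correct and follows essentially the same route as the paper's proof: a primal--dual witness / KKT argument (the paper invokes the sign-recovery characterization of Zhou (2009) rather than constructing the oracle candidate explicitly), with the same three events $\mathcal{B}_T$, $\mathcal{C}_T$ (restricted eigenvalue transfer) and $\mathcal{D}_T$ (the entrywise bound $K_T$ on $\Psi_T$ from Lemma \ref{BoundY}, which is indeed the source of the $2T^{-1/A}$ term), and the same norm bounds reducing the on- and off-support KKT conditions to (\ref{AdaLasso2}) and (\ref{AdaLasso1}) respectively. The only cosmetic differences are that the paper bounds the off-support term via $\enVert[0]{\Psi_{j,J}(\Psi_{J,J})^{-1}}_{\ell_1}$ rather than via $K_T\enVert[0]{\tilde{\beta}_{J}-\beta^*_{J}}_{\ell_1}$ (both telescope to the same expression), and that the characterization it uses is stated with non-strict inequalities, so no strictness of the subgradient condition is needed.
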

Here we have chosen to keep the expressions at a high level instead of inserting the upper bound on $\enVert[0]{\hat{\beta}_{i}-\beta_{i}^*}_{\ell_1}$ from Theorem \ref{Thm3} since this facilitates the interpretation. We also note, that the probabilities of detecting the correct sign pattern may be very small in small samples but the above result will be very useful in establishing the asymptotic sign consistency below.

As in Theorem \ref{Thm3}, $\sgn(\tilde{\beta}_{i})=\sgn(\beta_{i}^*)$ can be constructed to hold on the same set for all $i=1,...,k$ by choosing $s_i=\bar{s}$ in Theorem \ref{AdaLasso}. Clearly, the more precise the initial estimator, the smaller the left hand side in (\ref{AdaLasso1}). On the other hand a small $\beta_{\min,i}$ makes the inequality harder to satisfy. This is sensible since the correct sign pattern is harder to detect if the non-zero parameters are close to zero. $K_T$ is increasing in the dimension of the model and so large $k$ and $p$ make it harder to detect the correct sign pattern. The interpretation of (\ref{AdaLasso2}) is similar since $\lambda_T$ is increasing in the dimension of the model. Notice again that the assumption $\beta_{\min,i}\geq 2\enVert[0]{\hat{\beta}_{i}-\beta_{i}^*}_{\ell_1}$ is a reasonable one: one cannot expect to detect the correct sign pattern if the precision of the initial estimator is smaller than the distance the smallest non-zero coefficient is bounded away from zero by since otherwise the initial LASSO estimator may falsely classify non-zero parameters as zero.

Also notice that by the last assertion of Theorem \ref{Thm3}, $\beta_{\min,i}\geq 2\enVert[0]{\hat{\beta}_{i}-\beta_{i}^*}_{\ell_1}$ ensures that the initial LASSO estimator will not exclude any relevant variables. This is of course a necessary condition for the second stage adaptive LASSO to select the correct sign pattern.  

\subsection{Asymptotic properties of the adaptive Lasso}
The results in Theorem \ref{AdaLasso} are non-asymptotic but can be used to obtain the following sufficient conditions for asymptotic sign consistency of the adaptive LASSO.  

\begin{theorem}\label{AdaLASSOAsym} 
Assume that there exists a $\tilde{c}_{i}>0$ such that $\kappa_{i}\geq \tilde{c}_{i}$ and that $\sup_{T}\sigma_T\leq \infty$ as well as $\sup_T\enVert[0]{\Gamma}\sum_{i=0}^T\enVert[0]{F^{i}}<\infty$. If furthermore $k,p\in O(e^{T^a})$ as well as $s_{i}\in O(T^b)$ for some $a,b\geq 0$ satisfying $15a+4b<1$ and $\beta_{\min,i}\in\Omega(\ln(T)[a_T\vee b_T])$ for $a_T=T^{2b}T^{(15/2)a-1/2}\ln(T)^{1+5/2}$ and $b_T= T^{b/4}T^{(7/4)a-1/4}\ln(T)^{5/4}$, then $P(\sgn(\tilde{\beta}_{i})=\sgn(\beta_{i}^*))\rightarrow 1$.\footnote{Here $f(T)\in\Omega(g(T))$ means that there exists a constant $c$ such that $f(T)\geq cg(T)$ from a certain $T_0$ and onwards. Thus there exists a constant $c$ such that $\beta_{\min}\geq c\ln(T)[a_T\vee b_T]$ from a $T_0$ and onwards.}
\end{theorem}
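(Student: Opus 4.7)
The plan is to deduce the asymptotic sign-consistency from the finite-sample statement in Theorem \ref{AdaLasso} by (a) showing the probability there tends to one under the stated growth rates, and (b) verifying, for $T$ eventually large, the three deterministic hypotheses of Theorem \ref{AdaLasso}: the separation $\beta_{\min,i}\geq 2\enVert[0]{\hat\beta_i-\beta_i^*}_{\ell_1}$ together with inequalities (\ref{AdaLasso1}) and (\ref{AdaLasso2}), where $\enVert[0]{\hat\beta_i-\beta_i^*}_{\ell_1}$ is controlled by the oracle inequality from Theorem \ref{Thm3}.

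The first step is to work out the orders of the relevant quantities under $k,p\in O(e^{T^a})$, $s_i\in O(T^b)$, $\sup_T\sigma_T<\infty$ and $\sup_T\enVert[0]{\Gamma}\sum_i\enVert[0]{F^i}<\infty$. Since $\ln(1+k),\ln(1+p)=O(T^a)$, one obtains $\lambda_T=O(T^{(7a-1)/2}\ln(T)^{5/2})$ and $K_T=O(T^{4a}\ln T)$. Substituting into Theorem \ref{Thm3}, whose restricted-eigenvalue hypothesis is in force via $\kappa_i\geq\tilde c_i$, gives $\enVert[0]{\hat\beta_i-\beta_i^*}_{\ell_1}=O(s_i\lambda_T/\kappa_i^2)=O(T^{b+(7a-1)/2}\ln(T)^{5/2})$.

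Next I would dispose of the probability bound $1-2(k^2p)^{1-\ln(1+T)}-2(1+T)^{-1/A}-2T^{-1/A}-\pi_q(s_i)$. The middle two terms are trivially $o(1)$; for the first, $\ln(k^2p)=O(T^a)$ together with $1-\ln(1+T)\to-\infty$ yields $(k^2p)^{1-\ln(1+T)}=\exp(O(T^a)(1-\ln(1+T)))\to 0$. In $\pi_q(s_i)$ the dominant factor is $k^2p^2\exp(-\zeta T/[s_i^2\ln(T)(\ln(k^2p^2)+1)])$, and vanishing of this exponential reduces to $T/\{s_i^2\ln(T)[\ln(k^2p^2)]^2\}\to\infty$, i.e.\ $2a+2b<1$ up to logarithmic factors, which is implied by $15a+4b<1$.

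To verify the deterministic conditions, note that $2b+(15/2)a-1/2\geq b+(7a-1)/2$ for all $a,b\geq 0$, so $a_T$ already dominates the Theorem \ref{Thm3} bound on $\enVert[0]{\hat\beta_i-\beta_i^*}_{\ell_1}$ up to constants, and the hypothesis $\beta_{\min,i}\in\Omega(\ln(T)a_T)$ gives $\beta_{\min,i}\geq 2\enVert[0]{\hat\beta_i-\beta_i^*}_{\ell_1}$ eventually. In (\ref{AdaLasso1}) the binding $1/\beta_{\min,i}$-driven term is of order $s_iK_T\enVert[0]{\hat\beta_i-\beta_i^*}_{\ell_1}/\beta_{\min,i}=O(a_T/\beta_{\min,i})=O(1/\ln T)\to 0$; the remaining pieces are smaller and vanish once $a_T\to 0$, which is precisely where $15a+4b<1$ is used. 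For (\ref{AdaLasso2}), the $1/\beta_{\min,i}$ term dominates and yields $\beta_{\min,i}^2\gtrsim \sqrt{s_i}\lambda_T=b_T^2$; the extra $\ln T$ factor in $\beta_{\min,i}\in\Omega(\ln(T) b_T)$ gives strict inequality for large $T$.

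The main obstacle is bookkeeping: the conditions (\ref{AdaLasso1})-(\ref{AdaLasso2}) branch according to whether the $1/\beta_{\min,i}$ term or the constant term dominates, and one must identify the binding term in each regime. The $a_T\vee b_T$ shape of the beta-min requirement arises exactly because (\ref{AdaLasso1}) produces $a_T$ while (\ref{AdaLasso2}) produces $b_T$, and the exponent $15a+4b<1$ emerges as the sharpest rate restriction needed to drive $a_T$ to zero in the leading term of (\ref{AdaLasso1}).
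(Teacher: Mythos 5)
Your proposal is correct and follows essentially the same route as the paper's proof: both reduce the claim to verifying, asymptotically, the three hypotheses of Theorem \ref{AdaLasso} (the separation condition and inequalities (\ref{AdaLasso1})--(\ref{AdaLasso2})) using the bound $\enVert[0]{\hat\beta_i-\beta_i^*}_{\ell_1}=O(s_i\lambda_T)$ from Theorem \ref{Thm3}, and to showing that the probability of the underlying event $\mathcal{B}_T\cap\mathcal{C}_T\cap\mathcal{D}_T$ tends to one. Your identification of $a_T$ as the order of $s_i^2K_T\lambda_T$ (the binding term in (\ref{AdaLasso1})) and of $b_T^2$ as the order of $\sqrt{s_i}\lambda_T$ (the binding term in (\ref{AdaLasso2})), with $15a+4b<1$ arising exactly from $a_T\to 0$, matches the paper's computation.
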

Note that the requirements on $a$ and $b$ are stronger than in Theorem \ref{LASSOAsym} but the number of included variables may still be much larger than the sample size. The number of relevant variables must now be $o(T^{1/4})$. How small can $\beta_{\min,i}$ be? To answer this, consider a model with fixed $k$ and $p$ corresponding to $a=b=0$. This implies that $\beta_{\min,i}\in \Omega(\ln(T)^{9/4}T^{-1/4})$. Of course this is the case in particular if there exists a $d>0$ such that $\beta_{\min,i}\geq d$. As mentioned previously, the non-zero coefficients cannot be too small if one wishes to recover the correct sparsity pattern. If the non-zero coefficients tend to zero too fast it is well known that consistent model selection techniques will classify them as zero, see e.g. \cite{leebp05} or \cite{kock12} for a time series context. The beta-min condition exactly guards against non-zero coefficients shrinking to zero too fast.

The above conditions are merely sufficient. For example it is possible to relax $\sup_{T}\sigma_T<\infty$ or $\sup_T\enVert[0]{\Gamma}\sum_{i=0}^T\enVert[0]{F^{i}}<\infty$ at the price of slower growth rates for $s_{i},k$ and $p$. In the perhaps more realistic setting where $p$ is fixed and only $k\in O(e^{T^a})$ the conditions of Theorem \ref{AdaLASSOAsym} can be relaxed to $9a+4b<1$ while one may choose $a_T=T^{2b}T^{(9/2)a-1/2}\ln(T)^{1+5/2}$ and $b_T=T^{b/4}T^{(5/4)a}T^{-1/4}\ln(T)^{5/4}$. Note however, that for $a=b=0$ we still require  $\beta_{\min,i}\in \Omega(\ln(T)^{9/4}T^{-1/4})$. Hence the speed at which the smallest non-zero coefficient tends to zero does not increase.

It is also worth mentioning that the conclusion of Theorem \ref{AdaLASSOAsym} can be strengthened to $P(\cap_{i=1}^k\cbr[0]{\sgn(\tilde{\beta}_{i})=\sgn(\beta_{i}^*)})\rightarrow 1$ if the conditions are made uniform in $i=1,...,k$\footnote{We don't state the full theorem here since it basically entails deleting subscript $i$ and replacing $s_i$ by $\bar{s}=\max\cbr[0]{s_1,...,s_k}$ and  $\phi_{\min}(\Psi_{J_{i},J_{i}})$ and $\kappa_{i}$ by the corresponding versions minimized over $i$.}.
 
Furthermore, we remark that Theorems \ref{AdaLasso} and \ref{AdaLASSOAsym} show that the adaptive LASSO can be used to investigate whether a certain variable Granger causes another or not since these two theorems give conditions under which the adaptive LASSO detects the correct sparsity pattern\footnote{Of course model selection mistakes may occur and hence the adaptive Lasso is no panacea.}.
 
Finally, we show that the estimates of the non-zero parameters of the adaptive LASSO are asymptotically equivalent to the least squares ones only including the relevant variables. Hence, the limiting distribution of the non-zero coefficients is identical to the oracle assisted least squares estimator.
\begin{theorem}\label{asymdist}
Let the assumptions of Theorem \ref{AdaLASSOAsym} be satisfied and $\alpha_i$ be an $s_i\times 1$ vector with unit norm. Then,
\begin{align*}
\envert[1]{\sqrt{T}\alpha_{i}'(\tilde{\beta}_{J_{i}}-\beta^*_{J_{i}})-\sqrt{T}\alpha_{i}'(\hat{\beta}_{OLS,i}-\beta^*_{J_i})}\in o_p(1)
\end{align*}
where $o_p(1)$ is a term that converges to zero in probability uniformly in $\alpha_i$.
\end{theorem}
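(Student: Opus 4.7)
The plan is to exploit the sign consistency established in Theorem \ref{AdaLASSOAsym} to reduce the claim to an explicit algebraic identity between $\tilde{\beta}_{J_i}$ and $\hat{\beta}_{OLS,i}$, and then bound the resulting expression using the tools already developed in the paper.

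First, I would work on the event $\mathcal{E}_T=\cbr[0]{\sgn(\tilde{\beta}_i)=\sgn(\beta_{i}^*)}$, which by Theorem \ref{AdaLASSOAsym} has probability tending to one. On $\mathcal{E}_T$ the support of $\tilde{\beta}_i$ is exactly $J_i$, so $\tilde{\beta}_{J_i}$ is an interior minimizer of the smooth part of the restricted adaptive LASSO objective. The first-order condition then reads
\[
-\frac{2}{T}X_{J_i}'\bigl(y_i-X_{J_i}\tilde{\beta}_{J_i}\bigr)+2\lambda_T\tilde{w}_{J_i}=0,
\]
where $\tilde{w}_{J_i}$ is the vector with entries $\sgn(\tilde{\beta}_{i,j})/|\hat{\beta}_{i,j}|$ for $j\in J_i$. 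Solving for $\tilde{\beta}_{J_i}$ and recognizing that $\hat{\beta}_{OLS,i}=\Psi_{J_i,J_i}^{-1}(X_{J_i}'y_i/T)$, one gets the key identity
\[
\tilde{\beta}_{J_i}-\hat{\beta}_{OLS,i}=-\lambda_T\,\Psi_{J_i,J_i}^{-1}\tilde{w}_{J_i}.
\]

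Second, multiplying by $\sqrt{T}\alpha_i'$ and applying Cauchy--Schwarz,
\[
\sqrt{T}\bigl|\alpha_i'(\tilde{\beta}_{J_i}-\hat{\beta}_{OLS,i})\bigr|\leq \sqrt{T}\,\lambda_T\,\bigl\|\Psi_{J_i,J_i}^{-1}\bigr\|_{\mathrm{op}}\,\bigl\|\tilde{w}_{J_i}\bigr\|,
\]
which is automatically uniform over unit-norm $\alpha_i$. To control the operator norm, I would use the concentration of $\Psi_T$ around $\Gamma$ (Lemma \ref{CovBound}) through the restricted-eigenvalue argument of Lemma \ref{REbound}, which on a high-probability set gives $\phi_{\min}(\Psi_{J_i,J_i})\geq q\phi_{\min}(\Gamma_{J_i,J_i})$; under the assumption $\kappa_i\geq \tilde{c}_i>0$, this in turn is bounded away from zero. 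For the weight vector, the beta-min hypothesis $\beta_{\min,i}\geq 2\enVert[0]{\hat{\beta}_i-\beta_i^*}_{\ell_1}$ combined with the $\ell_1$-oracle inequality (\ref{REIQ2}) for the initial LASSO yields $|\hat{\beta}_{i,j}|\geq \beta_{\min,i}/2$ for every $j\in J_i$, so $\enVert[0]{\tilde{w}_{J_i}}\leq 2\sqrt{s_i}/\beta_{\min,i}$.

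Combining the three factors gives a deterministic bound of the form $\sqrt{T}\,\lambda_T\,\sqrt{s_i}/\beta_{\min,i}$ (up to a constant depending on $q$ and $\phi_{\min}(\Gamma_{J_i,J_i})$). The main obstacle is now one of careful rate bookkeeping: substituting $\lambda_T=\sqrt{8\log(1+T)^5\log(1+k)^4\log(1+p)^2\log(k^2p)\sigma_T^4/T}$ together with $k,p\in O(e^{T^a})$, $s_i\in O(T^b)$, and the lower bound $\beta_{\min,i}\in\Omega(\log(T)[a_T\vee b_T])$, and verifying that under $15a+4b<1$ the bound tends to zero. The somewhat intricate definitions of $a_T$ and $b_T$ in Theorem \ref{AdaLASSOAsym} are calibrated precisely so that this step goes through; in particular, the $b_T$ term is what forces $\sqrt{T}\lambda_T\sqrt{s_i}/\beta_{\min,i}$ to vanish, while the $a_T$ term was what secured the sign-consistency event. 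Taking the intersection of $\mathcal{E}_T$, the restricted-eigenvalue event, and the event on which (\ref{REIQ2}) holds (each of probability tending to one) and letting $T\to\infty$ then delivers the stated $o_p(1)$ conclusion uniformly in $\alpha_i$.
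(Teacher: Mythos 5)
Your strategy coincides with the paper's: restrict attention to the sign-consistency event from Theorem \ref{AdaLASSOAsym}, differentiate the objective (\ref{ALASSOobj}) on the support $J_i$, recognize $\hat{\beta}_{OLS,i}=\Psi_{J_i,J_i}^{-1}X_{J_i}'y_i/T$, and bound the penalty-induced remainder. Your identity $\tilde{\beta}_{J_i}-\hat{\beta}_{OLS,i}=-\lambda_T\Psi_{J_i,J_i}^{-1}\tilde{w}_{J_i}$ is the correct consequence of the first-order condition for the objective as actually defined (quadratic part scaled by $1/T$, penalty gradient $2\lambda_T\tilde{w}_{J_i}$), and your control of $\phi_{\min}(\Psi_{J_i,J_i})$ via Lemmas \ref{CovBound}--\ref{REbound} and of $\enVert[0]{\tilde{w}_{J_i}}\leq 2\sqrt{s_i}/\beta_{\min,i}$ via the beta-min condition and (\ref{REIQ2}) is fine.

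The gap is the one step you defer to ``careful rate bookkeeping'': the quantity you must send to zero is $\sqrt{T}\lambda_T\sqrt{s_i}/\beta_{\min,i}$, and this does \emph{not} vanish under the assumptions of Theorem \ref{AdaLASSOAsym}. Since $\lambda_T=\sqrt{8\ln(1+T)^5\ln(1+k)^4\ln(1+p)^2\ln(k^2p)\sigma_T^4/T}$, the factor $\sqrt{T}\lambda_T$ alone is of order $\ln(1+T)^{5/2}$ or larger and diverges; already in the most favourable case $a=b=0$ with $\beta_{\min,i}$ bounded away from zero your bound grows like $\ln(T)^{5/2}$, and for general $a,b$ it grows polynomially (the exponent works out to $T^{(7/4)a+b/4+1/4}$ against $\beta_{\min,i}\in\Omega(\ln(T)b_T)$). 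The scales $a_T$ and $b_T$ are calibrated so that $s_i^2K_T\lambda_T/\beta_{\min,i}$ and $\sqrt{s_i}\lambda_T/\beta_{\min,i}^2$ vanish in (\ref{Aux1})--(\ref{Aux2}), not so that $\sqrt{T}\lambda_T\sqrt{s_i}/\beta_{\min,i}$ does. Be aware that the paper's own proof reaches the much smaller term $\frac{\lambda_T}{\sqrt{T}}\alpha_i'(\Psi_{J_i,J_i})^{-1}g$ only because its displayed first-order condition $-2X_{J_i}'(y_i-X\tilde{\beta}_i)+2\lambda_Tg=0$ omits the $1/T$ in front of the quadratic part of (\ref{ALASSOobj}); clearing that normalization correctly, as you do, multiplies the penalty term by $T$. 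Moreover the divergence is not an artefact of a loose upper bound: taking $\alpha_i$ proportional to $\Psi_{J_i,J_i}^{-1}\tilde{w}_{J_i}$ shows the discrepancy is of exact order $\sqrt{T}\lambda_T$ up to bounded factors, so asymptotic equivalence by this route would require $\sqrt{T}\lambda_T\rightarrow 0$, which the stated choice of $\lambda_T$ rules out. Your deferred step therefore cannot be completed as written.
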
 
Theorem \ref{asymdist} reveals that $\sqrt{T}\alpha_{i}'(\tilde{\beta}_{J_{i}}-\beta^*_{J_i})$ is asymptotically equivalent to $\sqrt{T}\alpha_{i}'(\hat{\beta}_{OLS,i}-\beta^*_{J_{i}})$. Thus inference is asymptotically as efficient as oracle assisted least squares. As seen from the discussion following Theorem \ref{AdaLASSOAsym} this is the case in even very high-dimensional models. For the case of $p$ fixed one may adopt the milder assumptions discussed for that case after Theorem \ref{AdaLASSOAsym}. The adaptive LASSO remains asymptotically equivalent to the oracle assisted least squares procedure.  

By combining Theorem \ref{asymdist} and Lemma \ref{OLSO} one obtains the following upper bound on the rate of convergence of $\tilde{\beta}_{i}$ to $\beta_{i}^*$. 

\begin{corollary}\label{corrate}
Let the assumptions of part ii) of Theorem \ref{AdaLASSOAsym} be satisfied. Then,
\begin{align*}
\enVert[0]{\tilde{\beta}_{J_i}-\beta^*_{J_i}}_{\ell_1}\in O_p\left(\tilde{\lambda}_{T,i}s_{i}\right)
\end{align*}
where as in Lemma \ref{OLSO} $\tilde{\lambda}_{T,i}=\sqrt{8\ln(1+T)^5\ln(1+s_{i})^2\ln(s_{i})/T}$.
\end{corollary}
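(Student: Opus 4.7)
My plan is to derive the corollary directly from the triangle inequality
\begin{equation*}
\enVert[0]{\tilde{\beta}_{J_i}-\beta^*_{J_i}}_{\ell_1} \leq \enVert[0]{\tilde{\beta}_{J_i}-\hat{\beta}_{OLS,i}}_{\ell_1} + \enVert[0]{\hat{\beta}_{OLS,i}-\beta^*_{J_i}}_{\ell_1},
\end{equation*}
bounding each summand with one of the cited results. The second summand is immediate: Lemma \ref{OLSO} gives $\enVert[0]{\hat{\beta}_{OLS,i}-\beta^*_{J_i}}_{\ell_1} \leq \tilde{\lambda}_{T,i} s_i / (2q\,\phi_{\min}(\Gamma_{J_i,J_i}))$ on a set whose probability tends to one under the Theorem \ref{AdaLASSOAsym} assumptions. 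Since those assumptions impose $\kappa_i \geq \tilde c_i$, and $\kappa_i^2 \leq \phi_{\min}(\Gamma_{J_i,J_i})$ (the analogue of the observation made after Lemma \ref{OLSO}), the denominator is bounded away from zero, so this summand is $O_p(\tilde{\lambda}_{T,i} s_i)$.

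The slightly less routine step is turning Theorem \ref{asymdist} into an $\ell_1$ bound on $\tilde{\beta}_{J_i}-\hat{\beta}_{OLS,i}$. That theorem says $\envert[0]{\sqrt{T}\alpha_i'(\tilde{\beta}_{J_i}-\hat{\beta}_{OLS,i})}\in o_p(1)$ \emph{uniformly} over unit-norm $\alpha_i\in\mathbb{R}^{s_i}$. Taking the supremum over such $\alpha_i$ and using self-duality of the Euclidean norm yields $\enVert[0]{\tilde{\beta}_{J_i}-\hat{\beta}_{OLS,i}}\in o_p(T^{-1/2})$. Converting to $\ell_1$ via $\enVert[0]{v}_{\ell_1}\leq \sqrt{s_i}\enVert[0]{v}$ for $v\in\mathbb{R}^{s_i}$ gives
\begin{equation*}
\enVert[0]{\tilde{\beta}_{J_i}-\hat{\beta}_{OLS,i}}_{\ell_1} \in o_p\del[1]{\sqrt{s_i/T}}.
\end{equation*}

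To combine, I only have to check that $\sqrt{s_i/T}$ is dominated by $\tilde{\lambda}_{T,i} s_i$. From $\tilde{\lambda}_{T,i}=\sqrt{8\ln(1+T)^5\ln(1+s_i)^2\ln(s_i)/T}$ one computes $(\tilde{\lambda}_{T,i}s_i)^2/(s_i/T)=8\ln(1+T)^5\ln(1+s_i)^2\ln(s_i)\,s_i$, which is bounded below by a positive constant for all relevant $T$ and $s_i$, so $o_p(\sqrt{s_i/T})\subseteq o_p(\tilde{\lambda}_{T,i}s_i)$. Plugging the two summand bounds into the triangle inequality then gives $\enVert[0]{\tilde{\beta}_{J_i}-\beta^*_{J_i}}_{\ell_1}\in O_p(\tilde{\lambda}_{T,i}s_i)$.

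The only genuine point of care is the "uniformly in $\alpha_i$" clause in Theorem \ref{asymdist}: without uniformity, passing from the pointwise bound on $\alpha_i'(\tilde{\beta}_{J_i}-\hat{\beta}_{OLS,i})$ to the $\ell_2$-norm via the supremum would not be valid. Since Theorem \ref{asymdist} already asserts uniformity, this step is legitimate and no additional probabilistic work is required; the rest is just triangle inequality and a log-factor comparison.
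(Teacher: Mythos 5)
Your proof is correct and follows essentially the same route as the paper's: the identical triangle-inequality decomposition through $\hat{\beta}_{OLS,i}$, with Lemma \ref{OLSO} controlling the oracle least squares term and the uniformity in Theorem \ref{asymdist} controlling the difference term. The only (immaterial) difference is that you extract the $\ell_1$ bound on $\tilde{\beta}_{J_i}-\hat{\beta}_{OLS,i}$ via $\ell_2$ self-duality followed by $\enVert[0]{v}_{\ell_1}\leq\sqrt{s_i}\enVert[0]{v}$, obtaining $o_p(\sqrt{s_i/T})$, whereas the paper evaluates the uniform bound at coordinate vectors and sums over $j\in J_i$, obtaining the weaker but equally sufficient $o_p(s_i/\sqrt{T})$; both are then absorbed into $O_p(\tilde{\lambda}_{T,i}s_i)$ by the same logarithmic-factor comparison.
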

Notice that the rate of convergence is as fast as the one for the oracle assisted least squares estimator obtained by Lemma \ref{OLSO}. Hence, the adaptive LASSO improves further on the LASSO by selecting the correct sparsity pattern and estimating the non-zero coefficients at same rate as the least squares oracle. It is not difficult to show that in the case of fixed covariates the oracle assisted least squares estimator satisfies $\enVert[0]{\tilde{\beta}_{OLS,i}-\beta^*_{J_{i}}}\in O_p(s_{i}/\sqrt{T})$. Hence, we conjecture that it may be possible to decrease $\tilde{\lambda}_{T,i}$ in Corollary \ref{corrate} to $1/\sqrt{T}$ but in any case the current additional factors are merely logarithmic.

\section{Monte Carlo}\label{MC}
This section explores the finite sample properties of the LASSO and the adaptive LASSO. We compare the performance of these procedures to oracle assisted least squares which is least squares including only the relevant variables. This estimator is of course unfeasible in practice but is nevertheless a useful benchmark. Whenever the sample size permits it we also implement least squares including all variables, i.e. without any variable selection whatsoever. This is at the other extreme of Oracle OLS. We also implement (when feasible) the post Lasso estimator, that is, estimating least squares using the variables selected by the LASSO. Finally, the adaptive LASSO is also implemented using ridge regression as the first step estimator\footnote{We would like to thank two anonymous referees for suggesting using post-LASSO and ridge regression as initial estimator.}.  All the computations are implemented using the \texttt{lassovar} package (based on the \texttt{glmnet} algorithm) for {\tt R} and are fully replicable using the supplementary material provided. 

To select the value of the tuning parameter $\lambda_T$ we use BIC defined as $BIC_{\lambda_T}=\log(RSS)+\frac{\log(T)}{T}df\left(\lambda_T \right)$ where $RSS$ is the sum of squared residuals (dependence on $\lambda_T$ suppressed) and $df(\lambda_T)$ the degrees of freedom of the model for a fixed $\lambda_T$. Following \cite{bühlmannvdg11} we use $df\left( \lambda_T \right)=|\hat{J}\left( \lambda_T \right)|$ as an unbiased estimator of the degrees of freedom of the LASSO and $df\left( \lambda_T\right)=trace(X\left( X'X + \lambda_T I \right)^{-1}X')$ for the ridge regression. We also experimented with cross validation but this did not improve the results while being considerably slower. 

All procedures are implemented equation by equation and their performance is measured along the following dimensions which are reported for the whole system. 

\begin{enumerate}
\item Correct sparsity pattern: How often does a procedure select the correct sparsity pattern for all equations, i.e. how often does it include all the relevant variables while discarding all irrelevant variables.
\item True model included: How often does a procedure retain the relevant variables in all equations. This is a relevant measure in practice since even if a procedure does not detect the correct sparsity pattern it may still be able to retain all relevant variables while hopefully leaving many irrelevant variables out and hence reducing the dimension of the model.  
\item Fraction of relevant variables included. Even if a procedure wrongly discards a relevant variable it is still relevant to know how big a fraction of vari 
\item Number of variables included: How many variables does each procedure include on average. This measures how well a procedure reduces the dimension of the problem.
\item RMSE: The root mean square error of the parameter estimates calculated as $\sqrt{\frac{1}{MC}\sum_{i=1}^{MC}\enVert[1]{\hat{\beta}^{(i)}-\beta^*}^2}$ where $MC$ denotes the number of Monte Carlo replication and $\hat{\beta}^{(i)}$ is the estimated parameter vector in the $i$th Monte Carlo replication by any of the above mentioned procedures.
\item 1-step ahead RMSFE: For every Monte Carlo replication the estimated parameters are used to make a one step ahead forecast of the whole vector $y^{(i)}_{T+1}$ denoted $\hat{y}^{(i)}_{T+1,T}$. The root mean square forecast error (RMSFE) is calculated as 
$\sqrt{\frac{1}{k}\frac{1}{MC}\sum_{i=1}^{MC}\enVert[1]{\hat{y}^{(i)}_{T+1,T}-y^{(i)}_{T+1}}^2}$.
\end{enumerate} 
The following three Experiments are considered where the covariance matrix of the error terms is diagonal with $.01$ on the diagonal in all settings. The sample sizes are $T=50,100$ and $500$.

\begin{itemize}
\item Experiment A: The data is generated from a VAR(1) model with $\Phi_1=diag(0.5,...,0.5)$ and with $k=10, 20, 50$ and $100$. This is a truly sparse model where the behavior of each variable only depends on its own past. The case $k=100$ illustrates a high dimensional setting where each equation has 99 redundant variables.
\item Experiment B: The data is generated from a VAR(4) model where $\Phi_1$ and $\Phi_4$ have a block diagonal structure. In particular, the blocks are $5\times 5$ matrices with all entries of the blocks of $\Phi_1$ equal to $.15$ and all elements of the blocks of $\Phi_4$ equal to $-.1$. $\Phi_2=\Phi_3=0$. The largest root of the companion matrix of the system is $.98$ indicating a very persistent behavior of the system. This structure could be motivated by a model build on quarterly data as is often the case in macroeconometrics. $k=10,20$ and $50$.
\item Experiment C: The data is generated from a VAR(5) model where $\Phi_1=diag(.95,...,.95)$ and $\Phi_j=(-.95)^{(j-1)}\Phi_1,\ j=2,...,5$. This results in a system with a companion matrix that has a maximal eigenvalue of $.92$. The coefficients get smaller on distant lags reflecting the conventional wisdom that recent lags are more important than distant ones. $k=10, 20$ and $50$.
\item Experiment D: The data is generated from a VAR(1) with the $(i,j)$th entry given by $(-1)^{|i-j|}\rho^{|i-j|+1}$ with $\rho=0.4$. Hence, the entries decrease exponentially fast in the distance from the diagonal. This setting illustrates a violation of the sparsity assumption since no parameters are zero. Furthermore, there are many small but non-zero parameters.
\end{itemize}

	\begin{sidewaystable}
\resizebox{\linewidth}{!}{%
	\begin{tabular}{l rrr rrr rrr rrr rrr rrr}
	
\toprule
& \multicolumn{ 3 }{c}{ LASSO }& \multicolumn{ 3 }{c}{ post-LASSO }& \multicolumn{ 3 }{c}{ adaptive LASSO }& \multicolumn{ 3 }{c}{ adaptive LASSO }& \multicolumn{ 3 }{c}{ Oracle OLS }& \multicolumn{ 3 }{c}{Full  OLS }\\
&\multicolumn{ 6 }{c}{ } &\multicolumn{ 3 }{c}{ $1^{st}$ step: LASSO} &\multicolumn{ 3 }{c}{ $1^{st}$ step: Ridge } &\multicolumn{ 6 }{c}{  } \\
\cmidrule(r){ 2 - 4 }\cmidrule(r){ 5 - 7 }\cmidrule(r){ 8 - 10 }\cmidrule(r){ 11 - 13 }\cmidrule(r){ 14 - 16 }\cmidrule(r){ 17 - 19 }T &50 & 100 & 500 & 50 & 100 & 500 & 50 & 100 & 500 & 50 & 100 & 500 & 50 & 100 & 500 & 50 & 100 & 500\\
\midrule
k&\multicolumn{ 18 }{c}{ True model uncovered. }\\
\midrule
 10 & 0.00 & 0.01 & 0.10 & 0.00 & 0.01 & 0.10 & 0.00 & 0.03 & 0.35 & 0.00 & 0.02 & 0.49 & 1.00 & 1.00 & 1.00 & 0.00 & 0.00 & 0.00 \\ 
  20 & 0.00 & 0.00 & 0.01 & 0.00 & 0.00 & 0.01 & 0.00 & 0.00 & 0.04 & 0.00 & 0.00 & 0.12 & 1.00 & 1.00 & 1.00 & 0.00 & 0.00 & 0.00 \\ 
  50 & 0.00 & 0.00 & 0.00 &  & 0.00 & 0.00 & 0.00 & 0.00 & 0.00 & 0.00 & 0.00 & 0.00 & 1.00 & 1.00 & 1.00 &  & 0.00 & 0.00 \\ 
  100 & 0.00 & 0.00 & 0.00 &  & 0.00 & 0.00 & 0.00 & 0.00 & 0.00 & 0.00 & 0.00 & 0.00 & 1.00 & 1.00 & 1.00 &  &  & 0.00 \\ 
  \midrule
&\multicolumn{ 18 }{c}{ True model included. }\\
\midrule
 10 & 0.06 & 0.80 & 1.00 & 0.06 & 0.80 & 1.00 & 0.06 & 0.79 & 1.00 & 0.14 & 0.91 & 1.00 & 1.00 & 1.00 & 1.00 & 1.00 & 1.00 & 1.00 \\ 
  20 & 0.00 & 0.43 & 1.00 & 0.00 & 0.43 & 1.00 & 0.00 & 0.44 & 1.00 & 0.00 & 0.71 & 1.00 & 1.00 & 1.00 & 1.00 & 1.00 & 1.00 & 1.00 \\ 
  50 & 0.00 & 0.03 & 1.00 &  & 0.03 & 1.00 & 0.00 & 0.01 & 1.00 & 0.00 & 0.27 & 1.00 & 1.00 & 1.00 & 1.00 &  & 1.00 & 1.00 \\ 
  100 & 0.00 & 0.00 & 1.00 &  & 0.00 & 1.00 & 0.00 & 0.00 & 1.00 & 0.00 & 0.01 & 1.00 & 1.00 & 1.00 & 1.00 &  &  & 1.00 \\ 
  \midrule
&\multicolumn{ 18 }{c}{ Share of relevant variables selected. }\\
\midrule
 10 & 0.75 & 0.98 & 1.00 & 0.75 & 0.98 & 1.00 & 0.74 & 0.98 & 1.00 & 0.83 & 0.99 & 1.00 & 1.00 & 1.00 & 1.00 & 1.00 & 1.00 & 1.00 \\ 
  20 & 0.67 & 0.96 & 1.00 & 0.67 & 0.96 & 1.00 & 0.66 & 0.96 & 1.00 & 0.78 & 0.98 & 1.00 & 1.00 & 1.00 & 1.00 & 1.00 & 1.00 & 1.00 \\ 
  50 & 0.69 & 0.93 & 1.00 &  & 0.93 & 1.00 & 0.57 & 0.93 & 1.00 & 0.71 & 0.97 & 1.00 & 1.00 & 1.00 & 1.00 &  & 1.00 & 1.00 \\ 
  100 & 0.49 & 0.91 & 1.00 &  & 0.91 & 1.00 & 0.30 & 0.90 & 1.00 & 0.73 & 0.96 & 1.00 & 1.00 & 1.00 & 1.00 &  &  & 1.00 \\ 
  \midrule
&\multicolumn{ 18 }{c}{ Number of variables selected. }\\
\midrule
 10 &  15 &  15 &  12 &  15 &  15 &  12 &  13 &  13 &  11 &  15 &  13 &  10 &  10 &  10 &  10 & 100 & 100 & 100 \\ 
  20 &  37 &  34 &  25 &  37 &  34 &  25 &  32 &  31 &  23 &  42 &  32 &  22 &  20 &  20 &  20 & 400 & 400 & 400 \\ 
  50 & 930 &  92 &  66 &  &  92 &  66 & 635 &  88 &  65 & 212 & 104 &  59 &  50 &  50 &  50 &  & 2500 & 2500 \\ 
  100 & 4769 & 214 & 135 &  & 214 & 135 & 2994 & 199 & 134 & 2080 & 259 & 124 & 100 & 100 & 100 &  &  & 10000 \\ 
  \midrule
&\multicolumn{ 18 }{c}{ Root mean square estimation error. }\\
\midrule
 10 & 1.13 & 0.71 & 0.28 & 1.18 & 0.62 & 0.20 & 1.12 & 0.56 & 0.17 & 1.04 & 0.56 & 0.16 & 0.44 & 0.30 & 0.12 & 1.69 & 1.03 & 0.40 \\ 
  20 & 1.79 & 1.15 & 0.45 & 1.98 & 1.03 & 0.31 & 1.91 & 0.97 & 0.28 & 1.72 & 0.91 & 0.24 & 0.63 & 0.41 & 0.18 & 4.13 & 2.30 & 0.82 \\ 
  50 & 8.37 & 2.08 & 0.82 &  & 1.95 & 0.55 & 7.70 & 1.91 & 0.53 & 3.53 & 1.71 & 0.43 & 0.97 & 0.65 & 0.28 &  & 7.84 & 2.22 \\ 
  100 & 12.09 & 3.26 & 1.26 &  & 3.21 & 0.83 & 12.34 & 2.97 & 0.81 & 8.97 & 2.73 & 0.64 & 1.39 & 0.93 & 0.39 &  &  & 4.98 \\ 
  \midrule
&\multicolumn{ 18 }{c}{ Root mean square 1-step ahead forecast error. }\\
\midrule
 10 & 0.107 & 0.104 & 0.099 & 0.108 & 0.103 & 0.099 & 0.108 & 0.102 & 0.101 & 0.108 & 0.102 & 0.100 & 0.101 & 0.101 & 0.100 & 0.116 & 0.108 & 0.101 \\ 
  20 & 0.109 & 0.104 & 0.101 & 0.111 & 0.104 & 0.100 & 0.110 & 0.104 & 0.101 & 0.108 & 0.102 & 0.101 & 0.103 & 0.102 & 0.100 & 0.138 & 0.115 & 0.103 \\ 
  50 & 0.154 & 0.105 & 0.101 &  & 0.104 & 0.100 & 0.146 & 0.105 & 0.100 & 0.112 & 0.104 & 0.100 & 0.102 & 0.101 & 0.100 &  & 0.149 & 0.106 \\ 
  100 & 0.151 & 0.107 & 0.101 &  & 0.106 & 0.100 & 0.153 & 0.105 & 0.101 & 0.129 & 0.104 & 0.100 & 0.102 & 0.102 & 0.100 &  &  & 0.113 \\ 
  \bottomrule

	\end{tabular}
}
\caption{\small The results for Experiment A measured along the dimensions discussed in the main text.}
\label{Table1}
	\end{sidewaystable}

Table \ref{Table1} contains the results for Experiment A. Blank entries indicate settings where a procedure was not feasible. 

Neither the LASSO nor the adaptive LASSO unveil the correct sparsity pattern very often. However, in accordance with Theorem \ref{AdaLASSOAsym}, the adaptive LASSO shows a clear improvement along this dimension as the sample size increases when $k=10$. On the other hand, detecting exactly the correct model might be asking for too much. This is illustrated by the fact that the LASSO as well as the adaptive LASSO very often include all relevant variables. Table \ref{Table1} also shows that even in the cases where the true model is not included in the set chosen by the LASSO or the adaptive LASSO, the share of relevant variables included is still relatively high. The worst performance may be found for $k=100$ and $T=50$ where the share of relevant variables included by the adaptive LASSO is 30 percent. Also notice that when the LASSO is used as the initial estimator for the adaptive LASSO the latter can perform no better along this dimension than the former (variables excluded in the first step are also excluded in the second step). In this light it is encouraging that the adaptive LASSO actually performs almost as well as the LASSO -- it rarely discards any relevant variables in the second step. But how many variables are included in total, or put differently, how well do the procedures reduce the dimension of the model? For this measure the results are quite encouraging. Even when $k=100$ only 134 variables out $10,000$ possible are included by the adaptive LASSO when $T=500$. Since the relevant variables are always included this means that only 34 redundant variables, an average of $.34$ per equation, are included.

Table \ref{Table1} also reveals that the adaptive LASSO using ridge often outperforms the adaptive LASSO using the LASSO as first step estimator, albeit by a narrow margin. The post LASSO estimator results in a decrease of the estimation and forecast errors for $T=100$ and $T=500$, whereas for $T=50$ the post LASSO worsens slightly both errors.

This dimension reduction can result in a large reduction in RMSE compared to the least squares estimator including all variables. The LASSO and the adaptive LASSO are always more precise than this alternative. The adaptive LASSO tends to be more precise than the LASSO due to its more intelligent weights in the second step. However, it is still a little less precise than the oracle estimator -- a result which stems from the occasional inclusion of irrelevant variables\footnote{We also experimented with using least squares including all variables as initial estimator. However, it did not uniformly dominate the LASSO while being infeasible in settings with fewer observations than variables. Detailed computation results are available in the supplementary material.}. The two shrinkage procedures forecast as precisely as the oracle estimator except for the most difficult settings. As a consequence, they are more precise than least squares including all variables.

\begin{figure}
\begin{center}
\includegraphics[scale=.75]{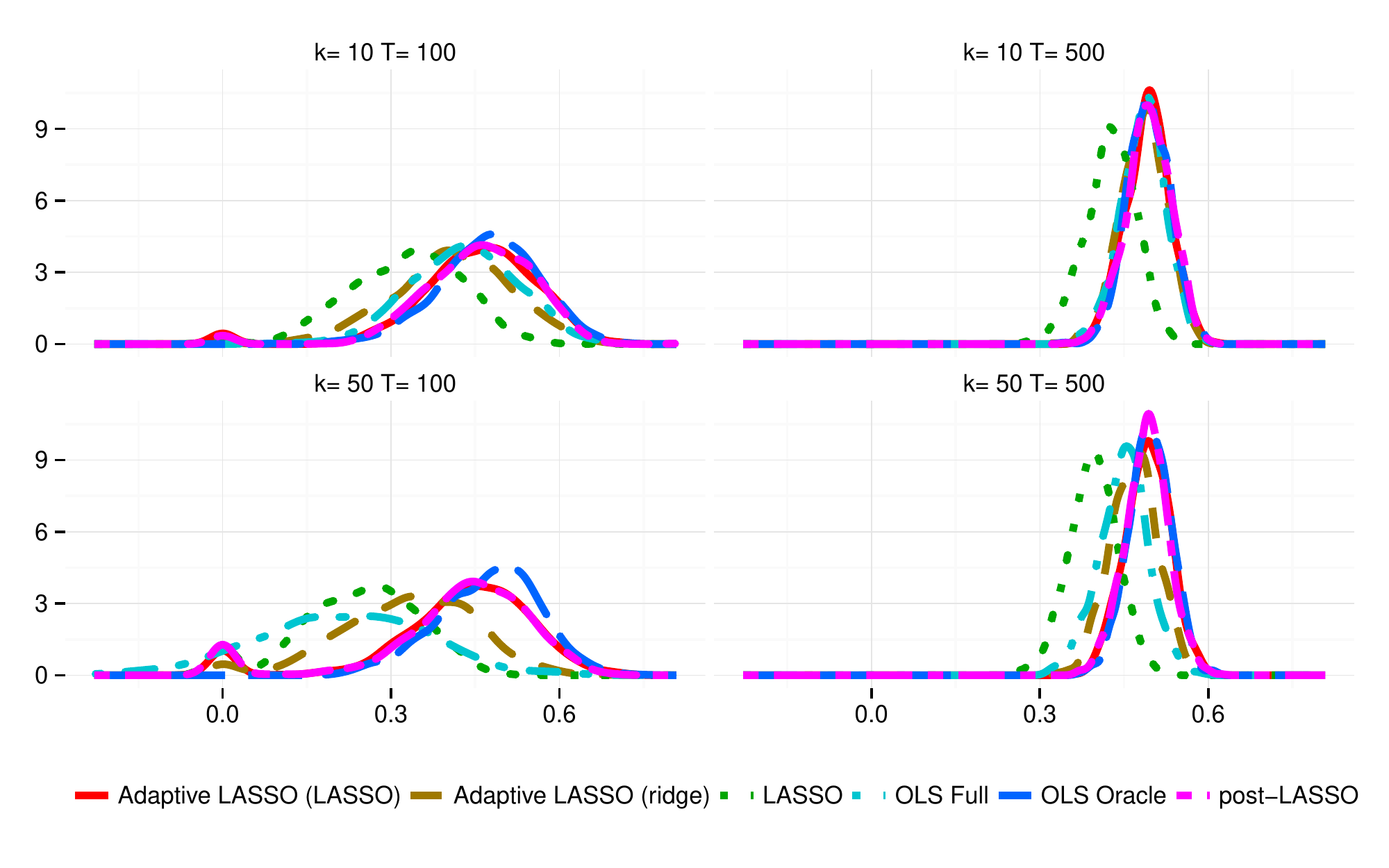}\vspace{-0.5cm}
\caption{\small Density of the estimates of the first parameter in the first equation. The true value of the parameter is $.5$.}
\label{Dens_beta}
\end{center}
\end{figure}

Figure \ref{Dens_beta} contains the densities of the estimates over the 1000 Monte Carlo replications of the first parameter in the first equation. The true value of this parameter is $.5$. The upper two plots are for $k=10$ and reveal that all procedures except for the LASSO (and to a lesser extend the adaptive LASSO with ridge as initial estimator) are centered at the right place. The LASSO is centered too far to the left due to its shrinkage, the post LASSO estimator corrects this bias.  

The bottom two plots are concerned with a high dimensional setting where $k=50$. Results for $k=100$ are not reported since least squares including all variables is only applicable for $T=500$ here. Two things are observed for $T=100$ when $k=50$. First, the least squares estimator including all variables has a very big variance and is not even centered the correct place. The adaptive LASSO does not suffer from this problem and is only slightly downwards biased compared to the least squares oracle. However, (and secondly) the LASSO and the adaptive LASSO have bimodal densities due to the occasional wrong exclusion of the non-zero parameter. Increasing the sample size to $500$ eliminates this problem and now the density of the adaptive LASSO sits almost on top of the one of the least squares oracle while the LASSO and full least squares procedures are still biased to the left.   

Table \ref{Table2} contains the results for Experiment B. This setting is more difficult than the one in in Experiment A since the model is less sparse and the system possesses a root close to the unit circle.

Notice that neither the LASSO nor the adaptive LASSO ever find exactly the true model. Both procedures leave out relevant variables even for $T=500$. However, the fraction of relevant variables included tends to be increasing in the sample size. The adaptive LASSO, irrespective of which initial estimator is used, discards relevant variables in the second step. This results in a situation (for $T=500$) where the number of variables included by the LASSO tends to be slightly larger than the ideal one while the opposite is the case for the adaptive LASSO.

As in Experiment A the LASSO as well as the adaptive LASSO have much lower RMSE than OLS including all covariates. Furthermore, the LASSO is now slightly more precise than the adaptive LASSO (using the LASSO as initial estimator). This finding is due to the fact that the LASSO tends to discard slightly fewer relevant variables than the adaptive LASSO. The LASSO is actually almost as precise as Oracle OLS for $T=500$. This shows that the adaptive LASSO need not always be more precise than the plain LASSO since in the second step estimation there is not only an opportunity to reduce the bias of the non-zero coefficients but also the possibility of wrongly excluding relevant variables.

	\begin{sidewaystable}
\resizebox{\linewidth}{!}{%
	\begin{tabular}{l rrr rrrrrr rrr rrr rrr}
\toprule
& \multicolumn{ 3 }{c}{ LASSO }& \multicolumn{ 3 }{c}{ post-LASSO }& \multicolumn{ 3 }{c}{ adaptive LASSO }& \multicolumn{ 3 }{c}{ adaptive LASSO }& \multicolumn{ 3 }{c}{ Oracle OLS }& \multicolumn{ 3 }{c}{Full  OLS }\\
&\multicolumn{ 6 }{c}{ } &\multicolumn{ 3 }{c}{ $1^{st}$ step: LASSO} &\multicolumn{ 3 }{c}{ $1^{st}$ step: Ridge } &\multicolumn{ 6 }{c}{  } \\
\cmidrule(r){ 2 - 4 }\cmidrule(r){ 5 - 7 }\cmidrule(r){ 8 - 10 }\cmidrule(r){ 11 - 13 }\cmidrule(r){ 14 - 16 }\cmidrule(r){ 17 - 19 }T &50 & 100 & 500 & 50 & 100 & 500 & 50 & 100 & 500 & 50 & 100 & 500 & 50 & 100 & 500 & 50 & 100 & 500\\
\midrule
k&\multicolumn{ 18 }{c}{ True model uncovered. }\\
\midrule
 10 & 0.00 & 0.00 & 0.00 & 0.00 & 0.00 & 0.00 & 0.00 & 0.00 & 0.00 & 0.00 & 0.00 & 0.00 & 1.00 & 1.00 & 1.00 & 0.00 & 0.00 & 0.00 \\ 
  20 & 0.00 & 0.00 & 0.00 &  & 0.00 & 0.00 & 0.00 & 0.00 & 0.00 & 0.00 & 0.00 & 0.00 & 1.00 & 1.00 & 1.00 &  & 0.00 & 0.00 \\ 
  50 & 0.00 & 0.00 & 0.00 &  &  & 0.00 & 0.00 & 0.00 & 0.00 & 0.00 & 0.00 & 0.00 & 1.00 & 1.00 & 1.00 &  &  & 0.00 \\ 
  \midrule
&\multicolumn{ 18 }{c}{ True model included. }\\
\midrule
 10 & 0.00 & 0.00 & 0.22 & 0.00 & 0.00 & 0.22 & 0.00 & 0.00 & 0.00 & 0.00 & 0.00 & 0.00 & 1.00 & 1.00 & 1.00 & 1.00 & 1.00 & 1.00 \\ 
  20 & 0.00 & 0.00 & 0.02 &  & 0.00 & 0.02 & 0.00 & 0.00 & 0.00 & 0.00 & 0.00 & 0.00 & 1.00 & 1.00 & 1.00 &  & 1.00 & 1.00 \\ 
  50 & 0.00 & 0.00 & 0.00 &  &  & 0.00 & 0.00 & 0.00 & 0.00 & 0.00 & 0.00 & 0.00 & 1.00 & 1.00 & 1.00 &  &  & 1.00 \\ 
  \midrule
&\multicolumn{ 18 }{c}{ Share of relevant variables selected. }\\
\midrule
 10 & 0.46 & 0.61 & 0.98 & 0.46 & 0.61 & 0.98 & 0.36 & 0.43 & 0.88 & 0.42 & 0.56 & 0.86 & 1.00 & 1.00 & 1.00 & 1.00 & 1.00 & 1.00 \\ 
  20 & 0.59 & 0.52 & 0.98 &  & 0.52 & 0.98 & 0.40 & 0.37 & 0.87 & 0.37 & 0.50 & 0.87 & 1.00 & 1.00 & 1.00 &  & 1.00 & 1.00 \\ 
  50 & 0.33 & 0.55 & 0.98 &  &  & 0.98 & 0.21 & 0.38 & 0.87 & 0.37 & 0.39 & 0.96 & 1.00 & 1.00 & 1.00 &  &  & 1.00 \\ 
  \midrule
&\multicolumn{ 18 }{c}{ Number of variables selected. }\\
\midrule
 10 & 129 &  81 & 109 & 129 &  81 & 109 & 102 &  51 &  90 & 103 &  69 &  94 & 100 & 100 & 100 & 400 & 400 & 400 \\ 
  20 & 841 & 174 & 231 &  & 174 & 231 & 505 & 109 & 184 & 293 & 152 & 195 & 200 & 200 & 200 &  & 1600 & 1600 \\ 
  50 & 2307 & 3011 & 623 &  &  & 623 & 1244 & 1604 & 476 & 1837 & 432 & 538 & 500 & 500 & 500 &  &  & 10000 \\ 
  \midrule
&\multicolumn{ 18 }{c}{ Root mean square estimation error. }\\
\midrule
 10 & 4.38 & 1.01 & 0.45 & 4.91 & 1.24 & 0.48 & 4.26 & 1.34 & 0.59 & 3.63 & 1.08 & 0.66 & 1.58 & 1.00 & 0.41 & 9.89 & 2.74 & 0.94 \\ 
  20 & 5.23 & 1.57 & 0.67 &  & 1.95 & 0.71 & 5.16 & 2.03 & 0.85 & 3.27 & 1.60 & 0.94 & 2.23 & 1.41 & 0.58 &  & 10.82 & 1.98 \\ 
  50 & 6.20 & 6.03 & 1.12 &  &  & 1.21 & 6.64 & 6.09 & 1.39 & 6.29 & 2.79 & 1.10 & 3.52 & 2.23 & 0.92 &  &  & 5.87 \\ 
  \midrule
&\multicolumn{ 18 }{c}{ Root mean square 1-step ahead forecast error. }\\
\midrule
 10 & 0.183 & 0.114 & 0.101 & 0.195 & 0.113 & 0.100 & 0.176 & 0.114 & 0.102 & 0.161 & 0.114 & 0.103 & 0.114 & 0.108 & 0.102 & 0.338 & 0.132 & 0.104 \\ 
  20 & 0.163 & 0.117 & 0.103 &  & 0.116 & 0.102 & 0.164 & 0.118 & 0.103 & 0.135 & 0.115 & 0.103 & 0.116 & 0.107 & 0.101 &  & 0.269 & 0.108 \\ 
  50 & 0.143 & 0.141 & 0.104 &  &  & 0.103 & 0.147 & 0.139 & 0.102 & 0.144 & 0.120 & 0.103 & 0.116 & 0.106 & 0.101 &  &  & 0.131 \\ 
  \bottomrule

	\end{tabular}
}
\caption{\small The results for Experiment B measured along the dimensions discussed in the main text.}
\label{Table2}
	\end{sidewaystable}

The adaptive LASSO using Ridge as a first step outperforms the adaptive LASSO using the LASSO as a first step in small samples ($T=50,100$). However in the instances where the LASSO performs almost as well as Oracle OLS, it provides more accurate weights for the adaptive LASSO than Ridge does leading to better performances for the adaptive LASSO using LASSO in the first step. The post-LASSO estimator does not improve on the LASSO in this experiment. This could be due to the fact that the first step LASSO has excluded relevant variables such that the second stage least squares estimation takes place in a misspecified model.

Table \ref{Table3} contains the results for Experiment C. As was the case in Experiment B, neither the LASSO nor the adaptive LASSO unveil the true model. However, they tend to at least retain the relevant variables as the sample size increases and the share of relevant variables is also always above 90 percent when $T=500$. As in Experiment A, the adaptive LASSO does not discard many relevant variables in the second estimation step. In fact, turning to the number of variables selected, this second step is very useful since it often greatly reduces the number of irrelevant variables included by the LASSO in the first step. Put differently, the LASSO carries out the rough initial screening in the first step while the adaptive LASSO fine tunes this in the second step.

The adaptive LASSO always estimates the parameters more precisely than full OLS (and is also more precise than the LASSO for $T=500$). As in the previous experiments this results in forecasts that are as precise as the OLS oracle for $T=500$. Note that in this experiment the adaptive LASSO using the LASSO as a first step estimator is more precise for $T=500$ than its counterpart using ridge regression. The evidence is more mixed for smaller sample sizes.

	\begin{sidewaystable}
\resizebox{\linewidth}{!}{%
	\begin{tabular}{l rrr rrr rrr rrr rrr rrr}
	
\toprule
& \multicolumn{ 3 }{c}{ LASSO }& \multicolumn{ 3 }{c}{ post-LASSO }& \multicolumn{ 3 }{c}{ adaptive LASSO  }& \multicolumn{ 3 }{c}{ adaptive LASSO  }& \multicolumn{ 3 }{c}{ Oracle OLS }& \multicolumn{ 3 }{c}{Full  OLS }\\
&\multicolumn{ 6 }{c}{ } &\multicolumn{ 3 }{c}{ $1^{st}$ step: LASSO} &\multicolumn{ 3 }{c}{ $1^{st}$ step: Ridge } &\multicolumn{ 6 }{c}{  } \\
\cmidrule(r){ 2 - 4 }\cmidrule(r){ 5 - 7 }\cmidrule(r){ 8 - 10 }\cmidrule(r){ 11 - 13 }\cmidrule(r){ 14 - 16 }\cmidrule(r){ 17 - 19 }T &50 & 100 & 500 & 50 & 100 & 500 & 50 & 100 & 500 & 50 & 100 & 500 & 50 & 100 & 500 & 50 & 100 & 500\\
\midrule
k&\multicolumn{ 18 }{c}{ True model uncovered. }\\
\midrule
 10 & 0.00 & 0.00 & 0.00 & 0.00 & 0.00 & 0.00 & 0.00 & 0.00 & 0.01 & 0.00 & 0.00 & 0.00 & 1.00 & 1.00 & 1.00 &  & 0.00 & 0.00 \\ 
  20 & 0.00 & 0.00 & 0.00 &  & 0.00 & 0.00 & 0.00 & 0.00 & 0.00 & 0.00 & 0.00 & 0.00 & 1.00 & 1.00 & 1.00 &  &  & 0.00 \\ 
  50 & 0.00 & 0.00 & 0.00 &  &  & 0.00 & 0.00 & 0.00 & 0.00 & 0.00 & 0.00 & 0.00 & 1.00 & 1.00 & 1.00 &  &  & 0.00 \\ 
  \midrule
&\multicolumn{ 18 }{c}{ True model included. }\\
\midrule
 10 & 0.00 & 0.00 & 1.00 & 0.00 & 0.00 & 1.00 & 0.00 & 0.00 & 1.00 & 0.00 & 0.00 & 1.00 & 1.00 & 1.00 & 1.00 &  & 1.00 & 1.00 \\ 
  20 & 0.00 & 0.00 & 0.96 &  & 0.00 & 0.96 & 0.00 & 0.00 & 0.96 & 0.00 & 0.00 & 0.94 & 1.00 & 1.00 & 1.00 &  &  & 1.00 \\ 
  50 & 0.00 & 0.00 & 0.00 &  &  & 0.00 & 0.00 & 0.00 & 0.00 & 0.00 & 0.00 & 0.00 & 1.00 & 1.00 & 1.00 &  &  & 1.00 \\ 
  \midrule
&\multicolumn{ 18 }{c}{ Share of relevant variables selected. }\\
\midrule
 10 & 0.69 & 0.62 & 1.00 & 0.69 & 0.62 & 1.00 & 0.51 & 0.59 & 1.00 & 0.45 & 0.59 & 1.00 & 1.00 & 1.00 & 1.00 &  & 1.00 & 1.00 \\ 
  20 & 0.43 & 0.57 & 1.00 &  & 0.57 & 1.00 & 0.28 & 0.49 & 1.00 & 0.40 & 0.44 & 1.00 & 1.00 & 1.00 & 1.00 &  &  & 1.00 \\ 
  50 & 0.19 & 0.46 & 0.93 &  &  & 0.93 & 0.11 & 0.31 & 0.93 & 0.22 & 0.39 & 0.78 & 1.00 & 1.00 & 1.00 &  &  & 1.00 \\ 
  \midrule
&\multicolumn{ 18 }{c}{ Number of variables selected. }\\
\midrule
 10 & 307 & 144 & 199 & 307 & 144 & 199 & 199 &  95 &  55 & 142 & 118 &  59 &  50 &  50 &  50 &  & 500 & 500 \\ 
  20 & 741 & 660 & 561 &  & 660 & 561 & 434 & 406 & 122 & 509 & 288 & 140 & 100 & 100 & 100 &  &  & 2000 \\ 
  50 & 1958 & 3823 & 1762 &  &  & 1762 & 1041 & 2039 & 407 & 1546 & 1577 & 1362 & 250 & 250 & 250 &  &  & 12500 \\ 
  \midrule
&\multicolumn{ 18 }{c}{ Root mean square estimation error. }\\
\midrule
 10 & 6.26 & 4.81 & 1.66 & 7.18 & 4.57 & 0.71 & 6.20 & 4.47 & 0.48 & 5.59 & 4.70 & 0.67 & 1.24 & 0.76 & 0.30 &  & 4.67 & 1.16 \\ 
  20 & 8.85 & 7.80 & 3.25 &  & 8.26 & 1.45 & 8.97 & 7.58 & 0.84 & 8.50 & 7.45 & 1.32 & 1.75 & 1.07 & 0.42 &  &  & 2.78 \\ 
  50 & 13.75 & 13.60 & 7.83 &  &  & 5.30 & 14.00 & 13.72 & 4.42 & 13.64 & 12.66 & 8.32 & 2.77 & 1.70 & 0.66 &  &  & 10.03 \\ 
  \midrule
&\multicolumn{ 18 }{c}{ Root mean square 1-step ahead forecast error. }\\
\midrule
 10 & 0.173 & 0.142 & 0.107 & 0.204 & 0.145 & 0.106 & 0.171 & 0.140 & 0.102 & 0.153 & 0.140 & 0.103 & 0.109 & 0.103 & 0.101 &  & 0.160 & 0.107 \\ 
  20 & 0.143 & 0.149 & 0.111 &  & 0.163 & 0.108 & 0.150 & 0.151 & 0.102 & 0.148 & 0.144 & 0.102 & 0.108 & 0.103 & 0.102 &  &  & 0.117 \\ 
  50 & 0.128 & 0.137 & 0.121 &  &  & 0.116 & 0.135 & 0.144 & 0.109 & 0.134 & 0.142 & 0.126 & 0.108 & 0.104 & 0.100 &  &  & 0.153 \\ 
  \bottomrule
					
	\end{tabular}
}
\caption{\small The results for Experiment C measured along the dimensions discussed in the main text.}
\label{Table3}
	\end{sidewaystable}

Finally, Table \ref{Table4} contains the results for Experiment D. In this setting no parameters are zero so it is sensible that the shrinkage procedures do not fare well in unveiling or including the true model. Also, they always include less than half of the relevant variables and considerably less for large $k$. However, in terms of estimation error \textit{all} shrinkage estimators are more precise than the oracle irrespective of sample size or number of variables in the model. This can be explained by the many very small but non-zero parameters in the model. Shrinking these parameters turns out to be more precise than estimating them unrestricted as done by the least squares oracle\footnote{As suggested by an anonymous referee we also investigated the properties of an "oracle" which knows the true paramters and calculates a linear combination of the covariates accordingly. The only thing which this oracle estimates is the coefficient to this linear combination (ideally this coeffient is one). This oracle yielded superior parameter estimates to the shrinkage estimators but did not forecast much more precisely.}. This gain in precision manifests itself in the shrinkage estimators delivering uniformly more precise forecasts. These findings are encouraging since the LASSO-type estimators perform well even when some of the assumptions underpinning the theoretical results are violated.

	\begin{sidewaystable}

\resizebox{\linewidth}{!}{%
	\begin{tabular}{l rrr rrr rrr rrr rrr rrr}
	
\toprule
& \multicolumn{ 3 }{c}{ LASSO }& \multicolumn{ 3 }{c}{ post-LASSO }& \multicolumn{ 3 }{c}{ adaptive LASSO }& \multicolumn{ 3 }{c}{ adaptive LASSO }& \multicolumn{ 3 }{c}{ Oracle OLS }& \multicolumn{ 3 }{c}{Full  OLS }\\
&\multicolumn{ 6 }{c}{ } &\multicolumn{ 3 }{c}{ $1^{st}$ step: LASSO} &\multicolumn{ 3 }{c}{ $1^{st}$ step: Ridge } &\multicolumn{ 6 }{c}{  } \\
\cmidrule(r){ 2 - 4 }\cmidrule(r){ 5 - 7 }\cmidrule(r){ 8 - 10 }\cmidrule(r){ 11 - 13 }\cmidrule(r){ 14 - 16 }\cmidrule(r){ 17 - 19 }T &50 & 100 & 500 & 50 & 100 & 500 & 50 & 100 & 500 & 50 & 100 & 500 & 50 & 100 & 500 & 50 & 100 & 500\\
\midrule
k&\multicolumn{ 18 }{c}{ True model uncovered. }\\
\midrule
 10 & 0.00 & 0.00 & 0.00 & 0.00 & 0.00 & 0.00 & 0.00 & 0.00 & 0.00 & 0.00 & 0.00 & 0.00 & 1.00 & 1.00 & 1.00 & 1.00 & 1.00 & 1.00 \\ 
  20 & 0.00 & 0.00 & 0.00 & 0.00 & 0.00 & 0.00 & 0.00 & 0.00 & 0.00 & 0.00 & 0.00 & 0.00 & 1.00 & 1.00 & 1.00 & 1.00 & 1.00 & 1.00 \\ 
  50 & 0.00 & 0.00 & 0.00 &  & 0.00 & 0.00 & 0.00 & 0.00 & 0.00 & 0.00 & 0.00 & 0.00 &  & 1.00 & 1.00 &  & 1.00 & 1.00 \\ 
  \midrule
&\multicolumn{ 18 }{c}{ True model included. }\\
\midrule
 10 & 0.00 & 0.00 & 0.00 & 0.00 & 0.00 & 0.00 & 0.00 & 0.00 & 0.00 & 0.00 & 0.00 & 0.00 & 1.00 & 1.00 & 1.00 & 1.00 & 1.00 & 1.00 \\ 
  20 & 0.00 & 0.00 & 0.00 & 0.00 & 0.00 & 0.00 & 0.00 & 0.00 & 0.00 & 0.00 & 0.00 & 0.00 & 1.00 & 1.00 & 1.00 & 1.00 & 1.00 & 1.00 \\ 
  50 & 0.00 & 0.00 & 0.00 &  & 0.00 & 0.00 & 0.00 & 0.00 & 0.00 & 0.00 & 0.00 & 0.00 &  & 1.00 & 1.00 &  & 1.00 & 1.00 \\ 
  \midrule
&\multicolumn{ 18 }{c}{ Share of relevant variables selected. }\\
\midrule
 10 & 0.22 & 0.32 & 0.48 & 0.22 & 0.32 & 0.48 & 0.17 & 0.24 & 0.36 & 0.21 & 0.26 & 0.36 & 1.00 & 1.00 & 1.00 & 1.00 & 1.00 & 1.00 \\ 
  20 & 0.13 & 0.17 & 0.27 & 0.13 & 0.17 & 0.27 & 0.10 & 0.13 & 0.20 & 0.14 & 0.15 & 0.20 & 1.00 & 1.00 & 1.00 & 1.00 & 1.00 & 1.00 \\ 
  50 & 0.36 & 0.07 & 0.11 &  & 0.07 & 0.11 & 0.25 & 0.06 & 0.08 & 0.09 & 0.07 & 0.09 &  & 1.00 & 1.00 &  & 1.00 & 1.00 \\ 
  \midrule
&\multicolumn{ 18 }{c}{ Number of variables selected. }\\
\midrule
 10 &  22 &  31 &  48 &  22 &  31 &  48 &  17 &  23 &  35 &  20 &  25 &  36 & 100 & 100 & 100 & 100 & 100 & 100 \\ 
  20 &  52 &  67 & 106 &  52 &  67 & 106 &  41 &  53 &  78 &  54 &  60 &  79 & 400 & 400 & 400 & 400 & 400 & 400 \\ 
  50 & 903 & 180 & 273 &  & 180 & 273 & 622 & 149 & 211 & 226 & 175 & 213 &  & 2500 & 2500 &  & 2500 & 2500 \\ 
  \midrule
&\multicolumn{ 18 }{c}{ Root mean square estimation error. }\\
\midrule
 10 & 1.18 & 0.82 & 0.36 & 1.30 & 0.86 & 0.35 & 1.29 & 0.86 & 0.37 & 1.18 & 0.82 & 0.37 & 1.63 & 1.02 & 0.41 & 1.63 & 1.02 & 0.41 \\ 
  20 & 1.82 & 1.27 & 0.54 & 2.14 & 1.39 & 0.56 & 2.08 & 1.35 & 0.56 & 1.82 & 1.21 & 0.56 & 3.93 & 2.22 & 0.83 & 3.93 & 2.22 & 0.83 \\ 
  50 & 7.82 & 2.19 & 0.91 &  & 2.52 & 0.97 & 7.28 & 2.45 & 0.95 & 3.47 & 2.06 & 0.92 &  & 7.40 & 2.19 &  & 7.40 & 2.19 \\ 
  \midrule
&\multicolumn{ 18 }{c}{ Root mean square 1-step ahead forecast error. }\\
\midrule
 10 & 0.112 & 0.105 & 0.101 & 0.113 & 0.105 & 0.101 & 0.113 & 0.106 & 0.101 & 0.111 & 0.107 & 0.101 & 0.115 & 0.107 & 0.101 & 0.115 & 0.107 & 0.101 \\ 
  20 & 0.114 & 0.108 & 0.102 & 0.116 & 0.106 & 0.101 & 0.117 & 0.106 & 0.101 & 0.113 & 0.106 & 0.101 & 0.136 & 0.114 & 0.103 & 0.136 & 0.114 & 0.103 \\ 
  50 & 0.154 & 0.111 & 0.102 &  & 0.109 & 0.101 & 0.150 & 0.110 & 0.101 & 0.116 & 0.108 & 0.102 &  & 0.148 & 0.106 &  & 0.148 & 0.106 \\ 
  \bottomrule
	\end{tabular}
}
\caption{\small The results for Experiment D measured along the dimensions discussed in the main text.}
	\label{Table4}
	\end{sidewaystable}

\section{Conclusions}\label{Conclusions}
This paper is concerned with estimation of high-dimensional stationary vector autoregressions. In particular, the focus is on the LASSO and the adaptive LASSO. We establish upper bounds for the prediction and estimation error of the LASSO. The novelty in these upper bounds is that they are non-asymptotic. Under further conditions it is shown that all relevant variables are retained with high probability. A comparison to oracle assisted least squares is made and it is seen that the LASSO does not perform much worse than this infeasible procedure. The finite sample results are then used to establish equivalent asymptotic results. It is seen that the LASSO is consistent even when the number of parameters grows sub-exponentially with the sample size.

Next, lower bounds on the probability with which the adaptive LASSO unveils the correct sign pattern are given. Again these results are non-asymptotic but they can be used to establish asymptotic sign consistency of the adaptive LASSO. As for the LASSO the number of parameters is allowed to grow sub-exponentially fast with the sample size. Finally, we show that the estimates of the non-zero coefficients are asymptotically equivalent to those obtained by least squares applied to the model only including the relevant covariates.  

The main technical novelty in the above results is the handling of the restricted eigenvalue condition in high-dimensional systems with dependent covariates. In particular, a finite sample bound on the estimation error of the empirical covariance matrix is established to this end.

We believe that these results may be useful for the applied researcher who often faces the curse of dimensionality when building VAR models since the number of parameters increases quadratically with the number of variables included. However, the LASSO and the adaptive LASSO are applicable even in these situations. 

In future research it may be interesting to further investigate the possibilities of handling empirically relevant models like the invertible $MA(1)$ which is currently not covered by our framework.

Furthermore, it is of interest to derive a theoretically justified data-driven method for choosing $\lambda_T$. However, we defer this to future work at this stage.

Finally, this paper has been concerned with stationary vector autoregressions and it is of interest to investigate whether similar oracle inequalities may hold for non-stationary VARs.




\section{Appendix}
We start by stating a couple of preparatory lemmas. The first lemma bounds the probability of the maximum of all possible cross terms between explanatory variables and error terms becoming large. This bound will be used in the proof of Lemma \ref{Bound} below.

\begin{lemma}\label{Orlicz}
Let Assumption \ref{Ass1} be satisfied. Then, for any $L_T>0$,
\begin{align*}
P\left(\max_{1\leq t\leq T}\max_{1\leq i\leq k}\max_{1\leq l\leq p}\max_{1\leq j\leq k}|y_{t-l,i}\epsilon_{t,j}|\geq L_T\right)
\leq 2\exp\left(\frac{-L_T}{A\ln(1+T)\ln(1+k)^2\ln(1+p)\sigma_T^2}\right)
\end{align*}
for some positive constant $A$.
\end{lemma}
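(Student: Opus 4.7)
\textbf{Proof plan for Lemma \ref{Orlicz}.} The approach is to control the $\psi_1$-Orlicz norm of a typical product $y_{t-l,i}\epsilon_{t,j}$, combine this with a maximal inequality, and convert the resulting Orlicz bound into a tail bound via Markov's inequality. First I would invoke Assumption~\ref{Ass1} to write $y_t=\sum_{m=0}^{\infty}C_m\epsilon_{t-m}$ via the Wold representation with geometrically decaying $C_m$ (guaranteed by all roots of $|I_k-\sum\Phi_jz^j|$ lying outside the unit disc). This shows that each $y_{t-l,i}$ is centered Gaussian with variance $\sigma_{i,y}^{2}\leq\sigma_T^{2}$, so $\|y_{t-l,i}\|_{\psi_2}\leq c\sigma_T$; similarly $\|\epsilon_{t,j}\|_{\psi_2}\leq c\sigma_T$.

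Next, using the standard product inequality $\|XY\|_{\psi_1}\leq 2\|X\|_{\psi_2}\|Y\|_{\psi_2}$ (a consequence of Young's inequality for the $\psi_\alpha$ functions; see e.g.\ van der Vaart--Wellner, Lemma~2.7.7), one obtains $\|y_{t-l,i}\epsilon_{t,j}\|_{\psi_1}\leq C\sigma_T^{2}$ uniformly in $t,i,l,j$. Crucially this bound does not require independence between $y_{t-l,i}$ and $\epsilon_{t,j}$, so no separate argument is needed for the case $l=0$ vs.\ $l\geq 1$.

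The third step is a standard Orlicz maximal inequality: for any collection of random variables $Z_1,\dots,Z_N$ with $\|Z_m\|_{\psi_1}\leq K$ one has
\[
\Bigl\|\max_{1\leq m\leq N}Z_m\Bigr\|_{\psi_1}\leq C\,K\,\ln(1+N).
\]
Applying this to the $N=Tk^2p$ products, and using the crude estimate $\ln(1+Tk^2p)\leq A'\ln(1+T)\ln(1+k)^2\ln(1+p)$ (valid since each factor is $\geq 1$), yields
\[
\Bigl\|\max_{t,i,l,j}\,y_{t-l,i}\epsilon_{t,j}\Bigr\|_{\psi_1}\leq A\,\sigma_T^{2}\,\ln(1+T)\ln(1+k)^2\ln(1+p).
\]
Finally, the definition of the $\psi_1$-norm together with Markov's inequality delivers, for any $L_T>0$, the tail bound
\[
P\!\left(\max_{t,i,l,j}|y_{t-l,i}\epsilon_{t,j}|\geq L_T\right)\leq 2\exp\!\left(\frac{-L_T}{A\ln(1+T)\ln(1+k)^2\ln(1+p)\sigma_T^{2}}\right),
\]
which is the claim.

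The main obstacle I expect is the bookkeeping on the Orlicz-norm constants and, more substantively, the uniform-in-$t$ variance bound on $y_{t,i}$: one must use the geometric decay of the moving-average coefficients $C_m$ to conclude $\sup_{t,i}\mathrm{Var}(y_{t,i})\leq\sigma_T^2$ in the triangular-array setting where the $\Phi_l$ (and hence the $C_m$) may vary with $T$. Everything else (the product-Orlicz inequality, the maximal inequality, and the conversion via Markov) is standard.
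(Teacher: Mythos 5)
Your proposal is correct and follows essentially the same route as the paper's proof: bound the exponential Orlicz norm of each product $y_{t-l,i}\epsilon_{t,j}$ by a constant multiple of $\sigma_T^2$, apply the Orlicz maximal inequality (Lemma~2.2.2 of van der Vaart--Wellner) to pick up the logarithmic factors, and convert to a tail bound via Markov's inequality and the definition of the Orlicz norm. The only immaterial differences are that the paper gets the sub-exponential bound on the product by splitting $P(|y_{t-l,i}\epsilon_{t,j}|>x)\leq P(|y_{t-l,i}|>\sqrt{x})+P(|\epsilon_{t,j}|>\sqrt{x})$ and invoking Lemma~2.2.1 of van der Vaart--Wellner rather than your $\psi_2$-product rule (both avoid any independence requirement), and it iterates the maximal inequality over the four indices separately to produce $\ln(1+T)\ln(1+k)^2\ln(1+p)$ directly, whereas you take a single maximum over $Tk^2p$ terms and then bound $\ln(1+Tk^2p)$ by that product -- which is fine, though note the individual factors are only bounded below by $\ln 2$ rather than $1$, so the constant $A'$ must absorb a factor of $(\ln 2)^{-3}$; also, the uniform variance bound you worry about is automatic here since $\sigma_T$ is defined in the paper as $\max_{1\leq i\leq k}(\sigma_{i,y}\vee\sigma_{i,\epsilon})$ and the process is stationary within each row of the array.
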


In order to prove Lemma \ref{Orlicz} Orlicz norms turn out to be useful since random variables with bounded Orlicz norms obey useful maximal inequalities. Let $\psi$ be a non-decreasing convex function with $\psi(0)=0$. Then, the Orlicz norm of a random variable $X$ is given by

\begin{align*}
\enVert{X}_\psi=\inf\left\{C>0:E\psi\left(|X|/C\right)\leq 1\right\}
\end{align*}

where, as usual, $\inf \emptyset =\infty$. By choosing $\psi(x)=x^p$ the Orlicz norm reduces to the usual $L^p$-norm since for $X\in L^p$, $C$ equals $E(|X|^p)^{1/p}$. However, for our purpose $\psi(x)=e^x-1$. One has the following maximal inequality:

\begin{lemma}[Lemma 2.2.2 from \cite{vdVW96}]\label{OLemma}
Let $\psi(x)$ be a convex, non-decreasing, non-zero function with $\psi(0)=0$ and $\lim\sup_{x,y\rightarrow\infty}\psi(x)\psi(y)/\psi(cxy)<\infty$ for some constant $c$. Then for any random variables, $X_1,...,X_m$,
\begin{align*}
\enVert[3]{\max_{1\leq i\leq m}X_i}_{\psi}\leq K\psi^{-1}(m)\max_{1\leq i\leq m}\enVert{X_i}_\psi
\end{align*}
for a constant K depending only on $\psi$.
\end{lemma}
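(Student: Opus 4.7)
The plan is to prove the maximal inequality via a direct Orlicz-norm argument in three steps: first, upgrade the asymptotic condition $\limsup\psi(x)\psi(y)/\psi(cxy)<\infty$ to a global ``quasi-submultiplicative'' inequality $\psi(u)\psi(v)\leq K_1\psi(cuv)$ valid for all $u,v\geq 0$; second, use this with $v=\psi^{-1}(m)$ to trade an extra factor of $m$ for a rescaling of the argument by $c\psi^{-1}(m)$; third, bound the expectation of $\psi$ applied to the maximum by a sum, using monotonicity and convexity of $\psi$.

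For step one, the hypothesis gives constants $K_0,M$ with $\psi(u)\psi(v)\leq K_0\psi(cuv)$ whenever $u\wedge v\geq M$. Because $\psi$ is continuous with $\psi(0)=0$ and non-decreasing, $\psi(u)\psi(v)$ is bounded on the region where $u$ or $v$ is small, while $\psi(cuv)$ is bounded below away from the origin and controlled linearly by convexity near the origin; after adjusting constants one obtains the global bound $\psi(u)\psi(v)\leq K_1\psi(cuv)$ for all $u,v\geq 0$, with $K_1\geq 1$ depending only on $\psi$. For step two, set $v=\psi^{-1}(m)$ in the global inequality, giving $\psi(u)\leq (K_1/m)\,\psi(cu\psi^{-1}(m))$ for every $u\geq 0$. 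Write $\sigma=\max_{i\leq m}\|X_i\|_\psi$, so $E\psi(|X_i|/\sigma)\leq 1$ for each $i$; substituting $u=|X_i|/(c\sigma\psi^{-1}(m))$ and taking expectations yields
\[
E\,\psi\!\left(\frac{|X_i|}{c\sigma\psi^{-1}(m)}\right)\leq \frac{K_1}{m}\,E\,\psi\!\left(\frac{|X_i|}{\sigma}\right)\leq \frac{K_1}{m}.
\]

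For step three, since $\psi$ is non-decreasing and non-negative, $\psi(\max_i|X_i|/b)=\max_i\psi(|X_i|/b)\leq\sum_i\psi(|X_i|/b)$ for any $b>0$. Choosing $b=K_1 c\sigma\psi^{-1}(m)$ and using convexity plus $\psi(0)=0$ (which gives $\psi(x/K_1)\leq \psi(x)/K_1$ for $K_1\geq 1$) produces
\[
E\,\psi\!\left(\frac{\max_{i\leq m}|X_i|}{K_1 c\sigma\psi^{-1}(m)}\right)\leq\frac{1}{K_1}\sum_{i=1}^m E\,\psi\!\left(\frac{|X_i|}{c\sigma\psi^{-1}(m)}\right)\leq\frac{1}{K_1}\cdot m\cdot\frac{K_1}{m}=1.
\]
By the definition of the Orlicz norm this gives $\|\max_{i\leq m}|X_i|\|_\psi\leq K_1 c\,\sigma\,\psi^{-1}(m)$, and since $|\max_i X_i|\leq\max_i|X_i|$ this yields the stated inequality with $K=K_1 c$ depending only on $\psi$. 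The main obstacle is step one: the passage from the asymptotic $\limsup$ assumption to a clean global inequality on $[0,\infty)^2$ requires care near the origin, where the ratio $\psi(u)\psi(v)/\psi(cuv)$ is least well behaved and where the absorption into the constant $K_1$ must be tracked explicitly; once that extension is in place, steps two and three reduce to the substitutions displayed above.
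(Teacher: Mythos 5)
The paper does not prove this lemma; it is quoted verbatim from van der Vaart and Wellner (1996, Lemma 2.2.2), so the only meaningful comparison is with the standard proof there. Your proposal follows the same broad strategy (use quasi-submultiplicativity of $\psi$ to trade the factor $m$ for a rescaling by $\psi^{-1}(m)$, then bound the max by a sum), but Step one contains a genuine error that the rest of the argument inherits. The global inequality $\psi(u)\psi(v)\leq K_1\psi(cuv)$ for \emph{all} $u,v\geq 0$ is false, and no adjustment of $c$ or $K_1$ can save it: for the very function used in this paper, $\psi(x)=e^x-1$, take $u=1/v$ and let $v\to\infty$; then $\psi(u)\psi(v)\sim e^v/v\to\infty$ while $\psi(cuv)=\psi(c)$ is a fixed constant. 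The hypothesis $\limsup_{x,y\to\infty}\psi(x)\psi(y)/\psi(cxy)<\infty$ only controls the regime where \emph{both} arguments are large; the mixed regime (one argument small or bounded, the other large) is exactly where the ratio blows up, and your sketch of how "the constants absorb" the near-origin behaviour does not address it. The failure propagates: the inequality you actually use in Step two, $m\,\psi\bigl(x/(c\psi^{-1}(m))\bigr)\leq K_1\psi(x)$ for all $x\geq 0$, fails for small $x$ — linearizing at $0$ for $\psi=e^x-1$ forces $K_1\geq m/(c\ln(1+m))$, so $K_1$ cannot depend only on $\psi$.

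The structural reason your route cannot work is that after bounding the max by a sum, the $m$ small-argument terms each contribute an amount that is only $O(\psi(1))$ individually but $O(m\psi(1))$ in total, which is not absorbed by the normalization. The standard proof avoids this by treating the small-argument case \emph{additively after taking the maximum}: for each $i$, either $|X_i|/(Cy)\geq 1$, in which case the submultiplicative bound (valid for arguments $\geq 1$ after a preliminary rescaling $\psi\mapsto\sigma\psi(\tau\,\cdot)$, which changes the Orlicz norm only by constants depending on $\psi$) gives $\psi(|X_i|/(Cy))\leq\psi(c|X_i|/C)/\psi(y)$, or else $\psi(|X_i|/(Cy))\leq\psi(1)$. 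Hence
\begin{align*}
\psi\Bigl(\max_{i\leq m}\frac{|X_i|}{Cy}\Bigr)\leq\sum_{i=1}^m\frac{\psi(c|X_i|/C)}{\psi(y)}+\psi(1),
\end{align*}
where the $\psi(1)$ appears only once because the left side is a maximum, not a sum. Taking expectations with $C=c\max_i\enVert{X_i}_\psi$ and $y=\psi^{-1}(2m)$, and arranging $\psi(1)\leq 1/2$ via the rescaling, bounds the expectation by $1$ and yields the claim. Your Steps two and three are fine mechanically, but they rest on a premise that is false; you need to replace the global multiplicative upgrade with this case split after the max.
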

Notice that this result is particularly useful if $\psi^{-1}(x)$ only increases slowly which is the case when $\psi(x)$ increases very fast as in our case.

\begin{proof}[Proof of Lemma \ref{Orlicz}]
Let $\psi(x)=e^x-1$. First we show that\\ $\enVert{\max_{1\leq t\leq T}\max_{1\leq i\leq k}\max_{1\leq l\leq p}\max_{1\leq j\leq k}y_{t-l,i}\epsilon_{t,j}}_\psi<\infty$. Repeated application of Lemma \ref{OLemma} yields
\begin{align}
&\enVert{\max_{1\leq t\leq T}\max_{1\leq i\leq k}\max_{1\leq l\leq p}\max_{1\leq j\leq k}y_{t-l,i}\epsilon_{t,j}}_\psi\notag\\
&\leq 
K^4 \ln(1+T)\ln(1+k)^2\ln(1+p) \max_{1\leq t\leq T}\max_{1\leq i\leq k}\max_{1\leq l\leq p}\max_{1\leq j\leq k} \enVert{y_{t-l,i}\epsilon_{t,j}}_{\psi}\label{ROrlicz}
\end{align}
Next, we turn to bounding $\enVert{y_{t-l,i}\epsilon_{t,j}}_{\psi}$ uniformly in $1\leq i,j\leq k,\ 1\leq l \leq p$ and $1\leq t\leq T$. Since $y_{t-l,i}$ and $\epsilon_{t,j}$ are both gaussian with mean 0 and variances $\sigma^2_{i,y}$ and $\sigma^2_{j,\epsilon}$ respectively it follows by a standard estimate on gaussian tails (see e.g. \cite{bill99}, page 263) that for any $x>0$

\begin{align*}
P\left(|y_{t-l,i}\epsilon_{t,j}|>x\right)&
\leq P\left(|y_{t-l,i}|>\sqrt{x}\right)+P\left(|\epsilon_{t,i}|>\sqrt{x}\right)\leq 2e^{-x/2\sigma_{i,y}^2}+2e^{-x/2\sigma_{j,\epsilon}^2}\\
&\leq 4e^{\frac{-x}{2\sigma_T^2}}
\end{align*} 

Hence, $\left\{y_{t-l,i}\epsilon_{t,j}\right\}$ has subexponential tails\footnote{A random variable $X$ is said to have subexponential tails if there exists constants $K$ and $C$ such that for every $x>0$, $P(|X|>x)\leq Ke^{-Cx}$.} and it follows from Lemma 2.2.1 in \cite{vdVW96} that $\enVert[0]{y_{t-l,i}\epsilon_{t,j}}_\psi\leq 10\sigma_T^2$. Using this in (\ref{ROrlicz}) yields
\begin{align*}
\enVert{\max_{1\leq t\leq T}\max_{1\leq i\leq k}\max_{1\leq l\leq p}\max_{1\leq j\leq k}y_{t-l,i}\epsilon_{t,j}}_\psi
&\leq K^4 \ln(1+T)\ln(1+k)^2\ln(1+p) 10\sigma_T^2\\
&=A\ln(1+T)\ln(1+k)^2\ln(1+p)\sigma_T^2:=f(T)
\end{align*}
where $A:=10K^4$. Finally, by Markov's inequality, the definition of the Orlicz norm, and the fact that $1\wedge \psi(x)^{-1}=1\wedge (e^x-1)^{-1}\leq 2e^{-x}$,

\begin{align*}
&P\left(\max_{1\leq t\leq T}\max_{1\leq i\leq k}\max_{1\leq l\leq p}\max_{1\leq j\leq k}|y_{t-l,i}\epsilon_{t,j}|\geq L_T\right)\\
&=P\left(\psi\left(\max_{1\leq t\leq T}\max_{1\leq i\leq k}\max_{1\leq l\leq p}\max_{1\leq j\leq k}|y_{t-l,i}\epsilon_{t,j}|/f(T)\right)\geq \psi\left(L_T/f(T)\right)\right)\\
& \leq
1\wedge\frac{E\psi\left(\max_{1\leq t\leq T}\max_{1\leq i\leq k}\max_{1\leq l\leq p}\max_{1\leq j\leq k}|y_{t-l,i}\epsilon_{t,j}|/f(T)\right)}{\psi\left[L_T/f(T)\right]}\\
&\leq
1\wedge\frac{1}{\psi\left[L_T/f(T)\right]}\\
&\leq 2\exp\left(-L_T/f(T)\right)\\
&=2\exp(-L_T/[A\ln(1+T)\ln(1+k)^2\ln(1+p)\sigma_T^2])
\end{align*}
\end{proof}

\begin{lemma}\label{Bound}
Let Assumption \ref{Ass1} be satisfied and define
\begin{align}
\mathcal{B}_T=\cbr[3]{\max_{1\leq i\leq k}\max_{1\leq l\leq p}\max_{1\leq j\leq k}\envert[3]{\frac{1}{T}\sum_{t=1}^Ty_{t-l,i}\epsilon_{t,j}}< \frac{\lambda_T}{2}}\label{BT}
\end{align}
Then,

\begin{align}
P(\mathcal{B}_T)\geq 1-2(k^2p)^{1-\ln(1+T)}-2(1+T)^{-A}\label{BTprob}
\end{align}
for $\lambda_T=\sqrt{8\ln(1+T)^5\ln(1+k)^4\ln(1+p)^2\ln(k^2p)\sigma_T^4/T}$ and $A$ a positive constant.

\end{lemma}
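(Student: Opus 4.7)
The plan is to combine Lemma \ref{Orlicz} (which controls the magnitudes of the individual products uniformly) with a martingale concentration inequality applied to truncated summands, and then conclude by a union bound over the $k^2 p$ triples $(i,l,j)$.

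Fix $(i,l,j)$ and let $W_t = y_{t-l,i}\epsilon_{t,j}$. With respect to the filtration $\mathcal{F}_t = \sigma(\epsilon_s : s\leq t)$, the variable $y_{t-l,i}$ is $\mathcal{F}_{t-1}$-measurable (since $l\geq 1$) while $\epsilon_{t,j}$ is independent of $\mathcal{F}_{t-1}$ with mean zero, so $\{W_t\}_{t=1}^T$ is a martingale difference sequence. To apply a classical concentration bound I would truncate: set $\tilde W_t = W_t \mathbf{1}\{|W_t|\leq L_T\}$ for a level $L_T$ to be chosen. Because $\epsilon_{t,j}$ is Gaussian, hence symmetric around zero, and the truncation event conditional on $\mathcal{F}_{t-1}$ reduces to the symmetric set $\{|\epsilon_{t,j}|\leq L_T/|y_{t-l,i}|\}$, the truncated variable is still centered, $E[\tilde W_t\mid \mathcal{F}_{t-1}] = 0$. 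Since $|\tilde W_t|\leq L_T$ almost surely, the Azuma--Hoeffding inequality gives
\begin{equation*}
P\!\left(\Bigl|\tfrac{1}{T}\sum_{t=1}^T \tilde W_t\Bigr|\geq \tfrac{\lambda_T}{2}\right)\leq 2\exp\!\left(-\tfrac{T\lambda_T^2}{8L_T^2}\right).
\end{equation*}
On the event $\mathcal{A}_T = \{\max_{t,i,l,j}|y_{t-l,i}\epsilon_{t,j}|\leq L_T\}$ the truncated and original sums coincide simultaneously for all triples, so $P(\mathcal{B}_T^c)$ is controlled by the sum of the above Azuma tail (union-bounded over $(i,l,j)$) and $P(\mathcal{A}_T^c)$, the latter bounded by Lemma \ref{Orlicz}.

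The next step is to choose $L_T$ so that the two resulting probabilities match the two terms in the claimed bound. Setting $L_T = \ln(1+T)^2\ln(1+k)^2\ln(1+p)\sigma_T^2$, Lemma \ref{Orlicz} yields $P(\mathcal{A}_T^c)\leq 2(1+T)^{-1/A}$, while a direct substitution of the stated $\lambda_T$ shows $T\lambda_T^2/(8L_T^2) = \ln(1+T)\ln(k^2p)$, so the Azuma bound equals $2(k^2p)^{-\ln(1+T)}$ for each triple. A union bound over the $k^2p$ triples then gives $2(k^2p)^{1-\ln(1+T)}$, and summing the two contributions yields the claimed lower bound on $P(\mathcal{B}_T)$.

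The main obstacle is justifying that the truncated sequence $\{\tilde W_t\}$ is still a martingale difference despite the data-dependent truncation event $\{|y_{t-l,i}\epsilon_{t,j}|\leq L_T\}$; this step is what forces us to use Gaussianity of $\epsilon_t$ rather than just sub-Gaussian tails, since without symmetry one would have to subtract a nonzero conditional mean and separately control the induced drift. Once symmetry is invoked the remainder is routine: match constants so that the Azuma and Orlicz tails both decay at the desired rate, and apply the union bound. Writing Azuma over a martingale (rather than an i.i.d.\ sum) is essential because the covariates $y_{t-l,i}$ are functionals of the entire past of $\epsilon_t$, but this dependence is exactly what the martingale structure absorbs.
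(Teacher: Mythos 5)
Your proposal is correct and follows essentially the same route as the paper's proof: truncate the martingale differences $y_{t-l,i}\epsilon_{t,j}$ at the level $L_T=\ln(1+T)^2\ln(1+k)^2\ln(1+p)\sigma_T^2$, use the symmetry of the Gaussian errors to keep the truncated sequence conditionally centered, apply Azuma--Hoeffding plus a union bound over the $k^2p$ triples, and control the truncation event with Lemma \ref{Orlicz}. The constant matching $T\lambda_T^2/(8L_T^2)=\ln(1+T)\ln(k^2p)$ is exactly the computation the paper performs.
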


In order to prove Lemma \ref{Bound} we need the following result, the so-called \textit{useful rule}, on conditional expectations adapted from \cite{hoffmann94} for our purpose.

\begin{lemma}[(6.8.14) in \cite{hoffmann94}]\label{hoffmann94}
Let $f:\mathbb{R}\times\mathbb{R}\to\mathbb{R}$ be measurable such that $|f(U,V)|$ is integrable and $f(U,v)$ is integrable for $P_V$ almost all $v\in \mathbb{R}$ (here $P_V$ denotes the distribution of $V$), and let $\phi(v)=E(f(U,v))$. If, for a sigma field $\mathcal{G}$, $V$ is measurable with respect to $\mathcal{G}$ and U is independent of $\mathcal{G}$, then we have
\begin{align*}
E(f(U,V)|\mathcal{G})=\phi(V)\text{ $P$-almost surely}
\end{align*}  
\end{lemma}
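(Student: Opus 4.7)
The plan is to prove this by the standard measure-theoretic machine: verify the identity first for indicators of measurable rectangles, extend to indicators of arbitrary Borel sets via a $\pi$-$\lambda$ argument, and then pass to simple, non-negative, and finally integrable functions.

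\textbf{Step 1 (rectangles).} Take $f(u,v)=\mathbf{1}_A(u)\mathbf{1}_B(v)$ for $A,B\in\mathcal{B}(\mathbb{R})$. Because $V$ is $\mathcal{G}$-measurable, $\mathbf{1}_B(V)$ can be pulled out of the conditional expectation. Because $U$ is independent of $\mathcal{G}$, we have $E(\mathbf{1}_A(U)\mid\mathcal{G})=P(U\in A)$ almost surely. Hence
\begin{equation*}
E(f(U,V)\mid\mathcal{G})=\mathbf{1}_B(V)\,P(U\in A).
\end{equation*}
On the other hand $\phi(v)=E(\mathbf{1}_A(U)\mathbf{1}_B(v))=\mathbf{1}_B(v)P(U\in A)$, so $\phi(V)$ agrees with the displayed expression almost surely.

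\textbf{Step 2 (arbitrary Borel sets in $\mathbb{R}^2$).} Let $\mathcal{D}$ be the collection of $C\in\mathcal{B}(\mathbb{R}^2)$ for which the identity holds with $f=\mathbf{1}_C$. The measurable rectangles form a $\pi$-system generating $\mathcal{B}(\mathbb{R}^2)$ and, by Step 1, lie in $\mathcal{D}$. Linearity of conditional expectation and countable additivity (together with dominated convergence, using the bound $\mathbf{1}$) show that $\mathcal{D}$ is closed under proper differences and countable increasing unions, hence is a $\lambda$-system. Dynkin's theorem gives $\mathcal{D}=\mathcal{B}(\mathbb{R}^2)$. Note that $\phi$ is measurable by Tonelli's theorem, so $\phi(V)$ is a well-defined random variable in each of the steps.

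\textbf{Step 3 (non-negative measurable $f$).} Linearity in $f$ extends the identity from indicators to non-negative simple functions. For a general non-negative measurable $f$, choose simple functions $f_n\uparrow f$. Conditional monotone convergence gives $E(f_n(U,V)\mid\mathcal{G})\uparrow E(f(U,V)\mid\mathcal{G})$ almost surely, while Tonelli applied to $E(f_n(U,v))$ yields $\phi_n(v)\uparrow\phi(v)$ pointwise, whence $\phi_n(V)\uparrow\phi(V)$ almost surely. Combining the two monotone limits preserves the identity.

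\textbf{Step 4 (integrable $f$).} The hypothesis $E|f(U,V)|<\infty$ allows the non-negative case to be applied to $|f|$, giving $E(|f(U,v)|)<\infty$ for $P_V$-almost every $v$ (matching the stated hypothesis on slices) and hence $\phi(v)$ is finite and measurable $P_V$-almost surely. Decompose $f=f^+-f^-$; apply Step 3 to each and subtract. The only point that requires a word of care is that the exceptional set where $\phi$ is not defined has $P_V$-measure zero, so $\phi(V)$ is defined $P$-almost surely, and the identity in Step 3 is unaffected by modification on a null set.

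The argument is entirely standard; the only delicate point is bookkeeping at Step 4, making sure that the integrability hypothesis on slices is exactly what is needed for $\phi(V)$ to be a bona fide random variable, so the main obstacle is not technical depth but rather ensuring the measurability and almost-sure-definition issues are handled cleanly rather than any nontrivial estimation.
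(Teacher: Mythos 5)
Your proof is correct. Note, however, that the paper itself offers no proof of this lemma: it is imported verbatim as result (6.8.14) from Hoffmann-J\o{}rgensen's textbook, so there is no in-paper argument to compare against. What you have written is the standard monotone-class (``freezing lemma'') proof, and the steps are all sound: the rectangle case correctly exploits that $\mathbf{1}_B(V)$ is $\mathcal{G}$-measurable while $E(\mathbf{1}_A(U)\mid\mathcal{G})=P(U\in A)$ by independence; the Dynkin argument extends this to all of $\mathcal{B}(\mathbb{R}^2)$ (you implicitly get $\mathbb{R}^2\in\mathcal{D}$ from Step 1 with $A=B=\mathbb{R}$, which you might state explicitly); and the passage through nonnegative and then integrable $f$ via conditional monotone convergence is routine. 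Your handling of the slice-integrability in Step 4 is the one genuinely delicate point and you get it right: applying the nonnegative case to $|f|$ and taking expectations shows $\int E|f(U,v)|\,dP_V(v)=E|f(U,V)|<\infty$, so $\phi$ is finite $P_V$-almost everywhere and $\phi(V)$ is a well-defined random variable $P$-almost surely. (Equivalently, since $U$ is independent of $\mathcal{G}$ and $V$ is $\mathcal{G}$-measurable, $U$ and $V$ are independent, so the joint law is the product measure and Fubini gives the same conclusion.) The only cosmetic quibbles are that in Step 2 conditional monotone convergence suffices in place of dominated convergence, and in Step 3 the pointwise convergence $\phi_n(v)\uparrow\phi(v)$ is plain monotone convergence rather than Tonelli; neither affects correctness.
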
 

\begin{proof}[Proof of Lemma \ref{Bound}]
By subadditivity of the probability measure it follows that for any $L_T>0$,

\begin{align*}
&P\left(\max_{1\leq i\leq k}\max_{1\leq l\leq p}\max_{1\leq j\leq k}\envert[3]{\frac{1}{T}\sum_{t=1}^Ty_{t-l,i}\epsilon_{t,j}}\geq \frac{\lambda_T}{2}\right)\\
&=P\left(\bigcup_{i=1}^k\bigcup_{l=1}^p\bigcup_{j=1}^k\left\{ \envert[3]{\frac{1}{T}\sum_{t=1}^Ty_{t-l,i}\epsilon_{t,j}}\geq \frac{\lambda_T}{2}\right\}\right)\\
&\leq 
P\left(\bigcup_{i=1}^k\bigcup_{l=1}^p\bigcup_{j=1}^k\left\{ \envert[3]{\frac{1}{T}\sum_{t=1}^Ty_{t-l,i}\epsilon_{t,j}}\geq \frac{\lambda_T}{2}\right\}\cap \bigcap_{t=1}^T \bigcap_{i=1}^k\bigcap_{l=1}^p\bigcap_{j=1}^k\left\{|y_{t-l,i}\epsilon_{t,j}|<L_T\right\}\right)\\
&+P\left(\left\{\bigcap_{t=1}^T \bigcap_{i=1}^k\bigcap_{l=1}^p\bigcap_{j=1}^k\left\{|y_{t-l,i}\epsilon_{t,j}|<L_T\right\}\right\}^c\right)\\
&\leq \sum_{i=1}^k\sum_{l=1}^p\sum_{j=1}^kP\left(\envert[3]{\frac{1}{T}\sum_{t=1}^Ty_{t-l,i}\epsilon_{t,j}}\geq \frac{\lambda_T}{2}, \bigcap_{t=1}^T\left\{|y_{t-l,i}\epsilon_{t,j}|<L_T\right\}\right)\\
&+P\left(\max_{1\leq t\leq T}\max_{1\leq i\leq k}\max_{1\leq l\leq p}\max_{1\leq j\leq k}|y_{t-l,i}\epsilon_{t,j}|\geq L_T\right)
\end{align*} 
Observe that  for $\mathcal{F}_t=\sigma\left(\left\{\epsilon_s,\ s=1,...,t; \ y_s,\ s=1,...,t\right\}\right)$ being the natural filtration $\left\{y_{t-l,i}\epsilon_{t,j}I_{\cbr[0]{|y_{t-l,i}\epsilon_{t,j}|< L_T}}, \mathcal{F}_t\right\}_{t=1}^\infty$ defines a martingale difference sequence for every $1\leq i,j\leq k$ and $1\leq l\leq p$ since
\begin{align*}
E(y_{t-l,i}\epsilon_{t,j}I_{\cbr[0]{|y_{t-l,i}\epsilon_{t,j}|< L_T}}|\mathcal{F}_{t-1})
=
y_{t-l,i}E(\epsilon_{t,j}I_{\cbr[0]{|y_{t-l,i}\epsilon_{t,j}|< L_T}}|\mathcal{F}_{t-1})
=
0
\end{align*}
where the second equality follows from Lemma \ref{hoffmann94} with $f(\epsilon_{t,j}, y_{t-l,i})=\epsilon_{t,j}I_{\cbr[0]{|y_{t-l,i}\epsilon_{t,j}|\leq L_T}}$ such that for all $v\in \mathbb{R}$ \footnote{The argument handles the case $v\neq 0$. The case $v=0$ follows from omitting the second to last equality and using $E(\epsilon_{t,j})=0$.}
\begin{align*}
\phi(v)=E(f(\epsilon_{t,j},v))
=
E(\epsilon_{t,j}I_{\cbr[0]{|v\epsilon_{t,j}|< L_T}})
=
E(\epsilon_{t,j}I_{\cbr[0]{|\epsilon_{t,j}|< L_T/|v|}})
=
0
\end{align*}
where the last equality follows from the gaussianity of the mean zero $\epsilon_{t,j}$.\footnote{More precisely, symmetrically truncating a symmetric mean zero variable yields a new variable with mean zero.} Hence, $E(\epsilon_{t,j}I_{\cbr[0]{|y_{t-l,i}\epsilon_{t,j}|\leq L_T}}|\mathcal{F}_{t-1})=\phi(y_{t-l,i})=0$ $P$-almost surely.  
Next, using the Azuma-Hoeffding inequality\footnote{The Azuma-Hoeffding inequality is now applicable since we apply it on the set where the summands are bounded by $L_T$.} on the first term and Lemma \ref{Orlicz} on the second term with $L_T=\ln(1+T)^2\ln(1+k)^2\ln(1+p)\sigma_T^2$ yields,

\begin{align*}
&P\left(\max_{1\leq i\leq k}\max_{1\leq l\leq p}\max_{1\leq j\leq k}\envert[3]{\frac{1}{T}\sum_{t=1}^Ty_{t-l,i}\epsilon_{t,j}}\geq \frac{\lambda_T}{2}\right)\\
&\leq 
k^2p\cdot2\exp\left(-\frac{T\lambda_T^2}{8L_T^2}\right)+2\exp\left(-\frac{\ln(1+T)}{A}\right)\\
&=
2k^2p\cdot \exp\left(-\ln(1+T)\ln(k^2p)\right)+2(1+T)^{-1/A}\\
&=2(k^2p)^{1-\ln(1+T)}+2(1+T)^{-1/A}
\end{align*}
\end{proof}

\begin{proof}[Proof of Theorem \ref{Thm1}]
As the results are equation by equation we shall focus on equation $i$ here but omit the subscript $i$ for brevity.
By the minimizing property of $\hat{\beta}$ it follows that
\begin{align*}
\frac{1}{T}\enVert[1]{y-X\hat{\beta}}^2+2\lambda_T\enVert[1]{\hat{\beta}}_{\ell_1}\leq 
\frac{1}{T}\enVert[1]{y-X\beta^*}^2+2\lambda_T\enVert[1]{\beta^*}_{\ell_1}
\end{align*}
which using that $y=X\beta^*+\epsilon$ yields
\begin{align*}
\frac{1}{T}\enVert{\epsilon}^2+\frac{1}{T}\enVert[1]{X(\hat{\beta}-\beta^*)}^2-\frac{2}{T}\epsilon'X(\hat{\beta}-\beta^*)+2\lambda_T\enVert[1]{\hat{\beta}}_{\ell_1}
\leq 
\frac{1}{T}\enVert{\epsilon}^2+2\lambda_T\enVert[1]{\beta^*}_{\ell_1}
\end{align*}
Or, equivalently
\begin{align}
\frac{1}{T}\enVert[1]{X(\hat{\beta}-\beta^*)}^2
\leq 
\frac{2}{T}\epsilon'X(\hat{\beta}-\beta^*)+2\lambda_T\left(\enVert[1]{\beta^*}_{\ell_1}-\enVert[1]{\hat{\beta}}_{\ell_1}\right)\label{start}
\end{align}
To bound $\frac{1}{T}\enVert[1]{X(\hat{\beta}-\beta^*)}^2$ one must bound $\frac{2}{T}\epsilon'X(\hat{\beta}-\beta^*)$. Note that on the set $\mathcal{B}_T$ defined in (\ref{BT}) one has
\begin{align*}
\frac{2}{T}\epsilon'X(\hat{\beta}-\beta^*)
\leq
2\enVert[2]{\frac{1}{T}\epsilon'X}_{\ell_\infty}\enVert[0]{\hat{\beta}-\beta^*}_{\ell_1}
\leq
\lambda_T\enVert[1]{\hat{\beta}-\beta^*}_{\ell_1}
\end{align*}
Putting things together, on $\mathcal{B}_T$,
\begin{align*}
\frac{1}{T}\enVert[1]{X(\hat{\beta}-\beta^*)}^2
\leq
\lambda_T\enVert[1]{\hat{\beta}-\beta^*}_{\ell_1}+2\lambda_T\left(\enVert[1]{\beta^*}_{\ell_1}-\enVert[1]{\hat{\beta}}_{\ell_1}\right)
\end{align*}
Adding $\lambda_T\enVert[1]{\hat{\beta}-\beta^*}_{\ell_1}$ yields
\begin{align}
\frac{1}{T}\enVert[1]{X(\hat{\beta}-\beta^*)}^2+\lambda_T\enVert[1]{\hat{\beta}-\beta^*}_{\ell_1}
\leq
2\lambda_T\left(\enVert[1]{\hat{\beta}-\beta^*}_{\ell_1}+\enVert[1]{\beta^*}_{\ell_1}-\enVert[1]{\hat{\beta}}_{\ell_1}\right)\label{A1}
\end{align}
which is inequality (\ref{IQ1}). To obtain inequality (\ref{IQ2}) notice that 
\begin{align*}
\enVert[1]{\hat{\beta}-\beta^*}_{\ell_1}+\enVert[1]{\beta^*}_{\ell_1}-\enVert[1]{\hat{\beta}}_{\ell_1}=\enVert[1]{\hat{\beta}_{J}-\beta^*_{J}}_{\ell_1}+\enVert[1]{\beta^*_{J}}_{\ell_1}-\enVert[1]{\hat{\beta}_{J}}_{\ell_1}
\end{align*}
In addition,  
\begin{align*}
\enVert[1]{\hat{\beta}_{J}-\beta^*_{J}}_{\ell_1}+\enVert[1]{\beta^*_{J}}_{\ell_1}-\enVert[1]{\hat{\beta}_{J}}_{\ell_1}
\leq 
2\enVert[1]{\hat{\beta}_{J}-\beta^*_{J}}_{\ell_1}
\end{align*}
by continuity of the norm. Furthermore,
\begin{align*}
\enVert[1]{\hat{\beta}_{J}-\beta^*_{J}}_{\ell_1}+\enVert[1]{\beta^*_{J}}_{\ell_1}-\enVert[1]{\hat{\beta}_{J}}_{\ell_1}\leq 2\enVert[1]{\beta^*_{J}}_{\ell_1}
\end{align*}
by subadditivity of the norm. Using the above two estimates in (\ref{A1}) yields inequality (\ref{IQ2}).
Next notice that (\ref{IQ2}) gives
\begin{align*}
\lambda_T\enVert[1]{\hat{\beta}-\beta^*}_{\ell_1}\leq 4\lambda_T \enVert[1]{\hat{\beta}_{J}-\beta^*_{J}}_{\ell_1}
\end{align*}
which is equivalent to
\begin{align*}
\enVert[1]{\hat{\beta}_{{J}^c}-\beta^*_{{J}^c}}_{\ell_1}\leq 3\enVert[1]{\hat{\beta}_{J}-\beta^*_{J}}_{\ell_1}
\end{align*}
and establishes (\ref{IQ3}). 
The lower bound on the probability with which (\ref{IQ1})-(\ref{IQ3}) hold follows from the fact that $P(\mathcal{B}_T)\geq 1-2(k^2p)^{1-\ln(1+T)}-2(1+T)^{-1/A}$ by Lemma \ref{Bound}. 
\end{proof}

The following lemma shows that in order to verify the restricted eigenvalue condition for a matrix it suffices that this matrix is close (in terms of maximum entrywise distance) to a matrix which does satisfy the restricted eigenvalue condition.

\begin{lemma}\label{vdGB}
Let $A$ and $B$ be two positive semi-definite $n\times n$ matrices and assume that $A$ satisfies the restricted eigenvalue condition RE($s$) for some $\kappa_A$. Then, for $\delta=\max_{1\leq i,j\leq n}\envert[0]{A_{i,j}-B_{i,j}}$, one also has $\kappa_B^2\geq \kappa_A^2-16s\delta$.
\end{lemma}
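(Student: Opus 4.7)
The plan is to work directly from the definition of the restricted eigenvalue condition. Fix any $R\subseteq\{1,\dots,n\}$ with $|R|\le s$ and any $v\in\mathbb{R}^n\setminus\{0\}$ satisfying the cone constraint $\enVert{v_{R^c}}_{\ell_1}\le 3\enVert{v_R}_{\ell_1}$. I will lower bound $v'Bv/\enVert{v_R}^2$ uniformly over such $(R,v)$, which by the definition of $\kappa_B^2(s)$ will give the claim.

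The first step is the trivial decomposition $v'Bv = v'Av + v'(B-A)v$. Since $A$ satisfies RE($s$) with constant $\kappa_A$, the first term is bounded below by $\kappa_A^2\enVert{v_R}^2$, so the entire task reduces to controlling $|v'(B-A)v|$ from above in terms of $\enVert{v_R}^2$.

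For the perturbation term, the key inequality is the bilinear-form bound
\begin{align*}
|v'(B-A)v|\le\max_{i,j}|B_{ij}-A_{ij}|\cdot\enVert{v}_{\ell_1}^2=\delta\enVert{v}_{\ell_1}^2.
\end{align*}
Now I convert the $\ell_1$ norm of the whole vector into something controlled by $\enVert{v_R}$. The cone condition gives $\enVert{v}_{\ell_1}=\enVert{v_R}_{\ell_1}+\enVert{v_{R^c}}_{\ell_1}\le 4\enVert{v_R}_{\ell_1}$, and Cauchy--Schwarz together with $|R|\le s$ yields $\enVert{v_R}_{\ell_1}\le\sqrt{s}\enVert{v_R}$. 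Combining these two estimates gives $\enVert{v}_{\ell_1}^2\le 16s\enVert{v_R}^2$, so $|v'(B-A)v|\le 16s\delta\enVert{v_R}^2$.

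Putting the two bounds together,
\begin{align*}
v'Bv\ge\kappa_A^2\enVert{v_R}^2-16s\delta\enVert{v_R}^2=(\kappa_A^2-16s\delta)\enVert{v_R}^2,
\end{align*}
and dividing by $\enVert{v_R}^2$ and minimizing over admissible $(R,v)$ proves the lemma. There is no real obstacle here: the only thing to be careful about is the correct conversion $\enVert{v}_{\ell_1}\le 4\sqrt{s}\enVert{v_R}$, which uses both the cone constraint and the bound $|R|\le s$, and explains the factor $16s$ in the statement.
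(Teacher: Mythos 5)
Your proof is correct and follows essentially the same route as the paper's: bound $|v'(A-B)v|\le\delta\enVert{v}_{\ell_1}^2$, use the cone constraint and Cauchy--Schwarz to get $\enVert{v}_{\ell_1}^2\le 16s\enVert{v_R}^2$, and rearrange before minimizing over admissible $(R,v)$. The only cosmetic difference is that the paper factors the bilinear bound through $\enVert{v}_{\ell_1}\enVert{(A-B)v}_{\ell_\infty}$, which is the same estimate.
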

 
\begin{proof}
The proof is similar to Lemma 10.1 in \cite{vdGB09}. For any (non-zero) $n\times 1$ vector $v$ such that $\enVert[0]{v_{J^c}}_{\ell_1}\leq 3\enVert[0]{v_J}_{\ell_1}$ one has
\begin{align*}
v'Av-v'Bv
&\leq
\envert[0]{v'Av-v'Bv}
=
\envert[0]{v'(A-B)v}
\leq 
\enVert[0]{v}_{\ell_1}\enVert[0]{(A-B)v}_{\ell_\infty}
\leq
\delta\enVert[0]{v}_{\ell_1}^2\\
&\leq 
\delta 16\enVert[0]{v_J}_{\ell_1}^2
\leq
\delta 16s\enVert[0]{v_J}^2
\end{align*} 
Hence, rearranging the above, yields
\begin{align*}
v'Bv
\geq
v'Av - 16s\delta\enVert[0]{v_J}^2 
\end{align*}
or equivalently,
\begin{align*}
\frac{v'Bv}{v_J'v_J}
\geq
\frac{v'Av}{v_J'v_J}-16s\delta 
\geq
\kappa_A^2-16s\delta 
\end{align*}
Minimizing the left hand side over $\cbr[0]{v\in \mathbb{R}^n\setminus \{0\}: \enVert[0]{v_{J^c}}_{\ell_1}\leq 3\enVert[0]{v_J}_{\ell_1}}$ yields the claim.
\end{proof}

\begin{lemma}\label{Wain}
Let $V$ be an $n\times 1$ vector with $V\sim N(0,Q)$. Then, for any $\epsilon,M>0$, $P\del[2]{\envert[1]{\enVert[0]{V}^2-E\enVert[0]{V}^2}>\epsilon}\leq 2\exp\del[1]{\frac{-\epsilon^2}{8n\enVert{Q}_{\ell_\infty}^2M^2}}+n\exp\del[0]{-M^2/2}$ 
\end{lemma}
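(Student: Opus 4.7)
The plan is a truncation argument combined with an Azuma--Hoeffding bound applied to the Doob martingale of $\enVert[0]{V}^2$. First, I would introduce the truncation event $A_M=\cbr[0]{\max_{1\leq i\leq n}\envert[0]{V_i}\leq M}$ and write
\begin{align*}
P\del[2]{\envert[1]{\enVert[0]{V}^2-E\enVert[0]{V}^2}>\epsilon}
\leq
P\del[2]{\envert[1]{\enVert[0]{V}^2-E\enVert[0]{V}^2}>\epsilon,\,A_M}+P(A_M^c).
\end{align*}
For the second term, the standard Gaussian tail bound applied coordinate-by-coordinate (using $V_i\sim N(0,Q_{ii})$ with $Q_{ii}\leq \enVert[0]{Q}_{\ell_\infty}$) combined with a union bound over $i=1,\ldots,n$ gives $P(A_M^c)\leq n\exp(-M^2/2)$, which matches the second term in the claimed bound.

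For the first term, I would introduce the Doob martingale $N_j:=E\del[0]{\enVert[0]{V}^2\mid\mathcal{F}_j}$ with respect to $\mathcal{F}_j=\sigma(V_1,\ldots,V_j)$, so that $\enVert[0]{V}^2-E\enVert[0]{V}^2=\sum_{j=1}^{n}D_j$ with $D_j=N_j-N_{j-1}$. Because $V$ is jointly Gaussian, the conditional distribution of each $V_i$ given $\mathcal{F}_j$ is Gaussian with mean linear in $(V_1,\ldots,V_j)$ and variance expressible through submatrices of $Q$. Unwinding this conditional structure and combining the truncation bound $\envert[0]{V_i}\leq M$ on $A_M$ with the entrywise estimate $\envert[0]{Q_{rs}}\leq \enVert[0]{Q}_{\ell_\infty}$ (which is attained on the diagonal because $Q$ is positive semi-definite) yields the almost-sure estimate $\envert[0]{D_j}\leq 2M\enVert[0]{Q}_{\ell_\infty}$ on $A_M$. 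To make this bound hold everywhere, I would work with the martingale stopped at the first index where some coordinate exceeds $M$.

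Applying the Azuma--Hoeffding inequality to the resulting bounded-increment martingale then produces
\begin{align*}
P\del[3]{\envert[2]{\sum_{j=1}^{n}D_j}>\epsilon,\,A_M}
\leq
2\exp\del[3]{\frac{-\epsilon^2}{2\sum_{j=1}^{n}(2M\enVert[0]{Q}_{\ell_\infty})^2}}
=
2\exp\del[3]{\frac{-\epsilon^2}{8n\enVert[0]{Q}_{\ell_\infty}^{2}M^2}},
\end{align*}
which is exactly the first term of the claim. Summing with $P(A_M^c)\leq n\exp(-M^2/2)$ finishes the proof.

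The main obstacle will be verifying the martingale-increment bound: $D_j$ decomposes into a cross-term that is linear in $(V_1,\ldots,V_j)$ plus a conditional-variance correction, and both pieces must be controlled simultaneously using the truncation $\envert[0]{V_i}\leq M$ and the entrywise bound on $Q$. A secondary technical point is that Azuma--Hoeffding requires the increment bound almost surely; this is handled cleanly by passing to the stopped martingale, or equivalently by running the entire argument on coordinate-clipped versions of $V$.
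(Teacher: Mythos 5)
Your overall architecture (truncate, bound the tail of the truncation event, get concentration on the truncated part) matches the paper's, but the two concrete ingredients you propose both fail, and the first failure is fatal to the approach. The martingale-increment bound $\envert[0]{D_j}\leq 2M\enVert[0]{Q}_{\ell_\infty}$ is false: a quadratic form in variables clipped at $M$ has Doob-martingale increments of order $M^2$, not $M$. Already for $n=1$ and $Q=1$ the single increment is $D_1=V_1^2-1$, which on $\cbr[0]{\envert[0]{V_1}\leq M}$ can be as large as $M^2-1$, exceeding $2M\enVert[0]{Q}_{\ell_\infty}=2M$ once $M>1+\sqrt{2}$; for correlated coordinates the increments additionally involve conditional means $E(V_i\mid V_1,\dots,V_j)$ whose regression coefficients are controlled by $(Q_{1:j,1:j})^{-1}$ and are not bounded by any entrywise or row-sum norm of $Q$. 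Consequently Azuma--Hoeffding can deliver at best an exponent of order $-\epsilon^2/(nM^4\enVert[0]{Q}^2)$, not the claimed $-\epsilon^2/(8nM^2\enVert[0]{Q}_{\ell_\infty}^2)$. The linear-in-$M$ constant in the lemma is intrinsically a \emph{gradient} bound, not a bounded-differences bound: the paper writes $V=\sqrt{Q}\tilde{V}$ with $\tilde{V}\sim N(0,I)$, clips $\tilde V$ at $M$, shows the clipped map $x\mapsto\enVert[0]{\sqrt{Q}x}^2$ is Lipschitz (in the Euclidean norm) with constant $2\sqrt{n}\enVert[0]{Q}_{\ell_\infty}M$ via the mean value theorem, and then invokes the Borell--Cirelson--Sudakov Gaussian concentration inequality for Lipschitz functions --- a strictly stronger tool than Azuma here, and the step your argument cannot replicate.

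The second, smaller gap is in the truncation event itself. You truncate the coordinates of $V$ directly, but $V_i\sim N(0,Q_{ii})$, so the union bound gives $P(A_M^c)\leq n\exp\del[0]{-M^2/(2\max_iQ_{ii})}$, which is weaker than the claimed $n\exp(-M^2/2)$ whenever some $Q_{ii}>1$. The paper avoids this by truncating the standardized vector $\tilde{V}=Q^{-1/2}V$, whose coordinates are standard normal, so the $n\exp(-M^2/2)$ term comes out exactly. (Relatedly, note that $\enVert[0]{Q}_{\ell_\infty}$ in this paper is the induced operator norm $\max_t\sum_s\envert[0]{Q_{t,s}}$, not the maximal entry, which is how the bound $\enVert[0]{2Qc}_{\ell_\infty}\leq 2\enVert[0]{Q}_{\ell_\infty}\enVert[0]{c}_{\ell_\infty}$ is used.)
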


\begin{proof}
The statement of the lemma only depends on the distribution of $V$ and so we may equivalently consider $\sqrt{Q}\tilde{V}$ with $\tilde{V}\sim N(0,I)$ where $\sqrt{Q}$ is the matrix square root of $Q$. Hence, 
\begin{align}
&P\del[2]{\envert[1]{\enVert[1]{\sqrt{Q}\tilde{V}}^2-E\enVert[1]{\sqrt{Q}\tilde{V}}^2}>\epsilon}\notag\\
&\leq
P\del[2]{\envert[1]{\enVert[1]{\sqrt{Q}\tilde{V}}^2-E\enVert[1]{\sqrt{Q}\tilde{V}}^2}>\epsilon,\ \enVert[0]{\tilde{V}}_{\ell_\infty}\leq M}+P(\enVert[0]{\tilde{V}}_{\ell_\infty}>M)\label{Lip}
\end{align}
To get an estimate on the first probability we show that on the set $\cbr[0]{\enVert[0]{x}_{\ell_\infty}\leq M}$ the function $f(x)=\enVert[0]{\sqrt{Q}x}^2=\enVert[0]{\sqrt{Q}(-M\vee x\wedge M)}^2$ (the minimum and maximum are understood entrywise in the vector $x$) is Lipschitz continuous. Note that with $g(x)=(-M\vee x\wedge M)$ we can write $f(x)=f(g(x))$ on $\cbr[0]{\enVert[0]{x}_{\ell_\infty}\leq M}$. To obtain a bound on the Lipschitz constant note that by the mean value theorem, on $\cbr[0]{\enVert[0]{x}_{\ell_\infty}\leq M}$,
\begin{align*}
\envert{f(x)-f(y)}
&=
\envert{f(g(x))-f(g(y))}
=
\envert{f'(c)'(g(x)-g(y))}\\
&\leq
\enVert{f'(c)}_{\ell_\infty}\enVert{g(x)-g(y)}_{\ell_1}
\leq
\enVert{f'(c)}_{\ell_\infty}\enVert{x-y}_{\ell_1}
\leq
\enVert{f'(c)}_{\ell_\infty}\sqrt{n}\enVert{x-y}
\end{align*}
for a point $c$ on the line segment joining $g(x)$ and $g(y)$. Since $c=\mu g(x)+(1-\mu)g(y)$ for some $0<\mu<1$ one has $\enVert{c}_{\ell_\infty}\leq \mu \enVert{g(x)}_{\ell_\infty}+(1-\mu)\enVert{g(y)}_{\ell_\infty}\leq M$ and so
\begin{align*}
\enVert{f'(c)}_{\ell_\infty}\sqrt{n}
=\enVert{2Qc}_{\ell_\infty}\sqrt{n}
\leq
2\sqrt{n}\enVert{Q}_{\ell_\infty}\enVert{c}_{\ell_\infty}
\leq
2\sqrt{n}\enVert{Q}_{\ell_\infty}M
\end{align*}
Hence, $f(x)$ is Lipschitz with Lipschitz constant bounded by $2\sqrt{n}\enVert{Q}_{\ell_\infty}M$. The Borell-Cirelson-Sudakov inequality (see e.g. \cite{massart07}, Theorem 3.4) then yields that the first probability in (\ref{Lip}) can be be bounded by $2\exp\del[1]{\frac{-\epsilon^2}{2(2\sqrt{n}\enVert{Q}_{\ell_\infty}M)^2}}$. Regarding the second probability in (\ref{Lip}) note that by the union bound and standard tail probabilities for gaussian variables (see e.g. \cite{bill99}, page 263) one has $P(\enVert[0]{\tilde{V}}_{\ell_\infty}>M)\leq ne^{-M^2/2}$. This yields the lemma.
\end{proof}

\begin{lemma}\label{CovBound}
Let Assumption \ref{Ass1} be satisfied. Then, for any $t,M>0$, one has\\ $P\del[2]{\max_{1\leq i,j\leq kp}\envert[0]{\Psi_{T,{i,j}}-\Gamma_{i,j}}>t}
\leq
2k^2p^2\del[2]{2\exp\del[1]{\frac{-t^2T}{8\enVert{Q}_{\ell_\infty}^2M^2}}+T\exp\del[0]{-M^2/2}}$ where $\enVert[0]{Q}_{\infty}\leq 2\enVert[0]{\Gamma}\sum_{i=0}^T\enVert[0]{F^{i}}.
$
\end{lemma}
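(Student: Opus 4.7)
The plan is to bound each entry of $\Psi_T-\Gamma$ separately via Lemma \ref{Wain} and then take a union bound over the $(kp)^2$ entries. A generic entry has the form $\Psi_{T,ij}-\Gamma_{ij}=\frac{1}{T}\sum_{t=1}^T\del[0]{a_tb_t-E(a_tb_t)}$ where $a_t=e_r'Z_t$ and $b_t=e_s'Z_t$ are coordinates of the stationary Gaussian vector $Z_t$ indexed by the $(i,j)$-pair. Since products of Gaussians are not Gaussian but Lemma \ref{Wain} is phrased for squared norms of Gaussian vectors, I would polarize: $a_tb_t=\frac{1}{4}\sbr[0]{(a_t+b_t)^2-(a_t-b_t)^2}$. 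Setting $U=(a_t+b_t)_{t=1}^T$ and $V=(a_t-b_t)_{t=1}^T$ (both mean-zero Gaussian vectors in $\mathbb{R}^T$), this gives $\frac{1}{T}\sum_t a_tb_t=\frac{1}{4T}(\enVert[0]{U}^2-\enVert[0]{V}^2)$, and subtracting expectations yields
\begin{align*}
\envert[1]{\Psi_{T,ij}-\Gamma_{ij}}\le \frac{1}{4T}\del[1]{\envert[0]{\enVert[0]{U}^2-E\enVert[0]{U}^2}+\envert[0]{\enVert[0]{V}^2-E\enVert[0]{V}^2}}.
\end{align*}
Hence the event $\envert[0]{\Psi_{T,ij}-\Gamma_{ij}}>t$ forces one of the two deviations on the right to exceed $2tT$, and Lemma \ref{Wain} with $n=T$ and $\epsilon=2tT$ controls each probability by an expression of the stated form, with $Q$ being $\mathrm{Cov}(U)$ or $\mathrm{Cov}(V)$.

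The main work, and the step I expect to be the actual obstacle, is to bound the operator norm $\enVert[0]{Q}_{\ell_\infty}$ (max absolute row sum) of these $T\times T$ covariance matrices uniformly in the index pair $(i,j)$. Observe that both $U_t$ and $V_t$ can be written as $c'Z_t$ for some $c\in\cbr[0]{-1,0,1}^{kp}$ with at most two non-zero entries, so $\enVert[0]{c}^2\le 2$. Consequently $Q_{t,s}=c'\mathrm{Cov}(Z_t,Z_s)c$. This is where the VAR dependence structure enters: writing the model in companion form $Z_{t+1}=FZ_t+\tilde\epsilon_{t+1}$ (with $\tilde\epsilon$ the appropriately stacked innovation), one gets $\mathrm{Cov}(Z_{t+h},Z_t)=F^h\Gamma$ for $h\ge 0$, and therefore $\envert[0]{Q_{t,s}}\le \enVert[0]{c}^2\enVert[0]{F^{\envert[0]{t-s}}}\enVert[0]{\Gamma}\le 2\enVert[0]{\Gamma}\enVert[0]{F^{\envert[0]{t-s}}}$. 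Summing the $t$-th row over $s=1,\ldots,T$ and using stationarity of the exponent $\envert[0]{t-s}$ gives $\enVert[0]{Q}_{\ell_\infty}\le 2\enVert[0]{\Gamma}\sum_{h=0}^T\enVert[0]{F^h}$ (up to an immaterial absorbed constant), which is exactly the stated bound. Here the fact that $\rho<1$ (Assumption \ref{Ass1}) ensures $\enVert[0]{F^h}$ decays geometrically, so this sum is $O(1)$ in practically relevant stationary regimes.

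Finally, taking a union bound over all $(kp)^2=k^2p^2$ index pairs $(i,j)$, each contributing two Gaussian concentration bounds (one for $U$, one for $V$) of the form in Lemma \ref{Wain}, produces the announced bound with the factor $2k^2p^2$ in front. The constants inside the exponent are absorbed into the $8$ and the $\enVert[0]{Q}_{\ell_\infty}^2$ factor, exploiting that $\epsilon=2tT$ contributes $\epsilon^2/(8n)=4t^2T^2/(8T)=t^2T/2$ and the polarization inflates the covariance norm by a bounded factor. In summary: the routine steps are the polarization identity, the application of Lemma \ref{Wain}, and the union bound; the technical heart of the proof is the clean operator-norm bound on the covariance matrix of the polarized sequences, which is made possible by rewriting $Z_t$ as a Gaussian VAR(1) and using the geometric decay induced by the companion matrix $F$.
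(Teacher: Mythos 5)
Your proposal is correct and follows essentially the same route as the paper: a union bound over the $k^2p^2$ entries, a polarization identity reducing each off-diagonal entry to a difference of (deviations of) squared norms of Gaussian vectors so that Lemma \ref{Wain} applies, and a bound on $\enVert[0]{Q}_{\ell_\infty}$ via the companion-form identity $\mathrm{Cov}(Z_{t+h},Z_t)=F^h\Gamma$ combined with Cauchy--Schwarz and the sum $\sum_{h=0}^T\enVert[0]{F^h}$. The only cosmetic difference is your use of unnormalized $\pm1$ contrast vectors where the paper uses unit vectors with entries $\pm1/\sqrt{2}$, which merely reshuffles constants between the exponent and $\enVert[0]{Q}_{\ell_\infty}$.
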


\begin{proof}
For any $t>0$, it follows from a union bound
\begin{align}
P\del[2]{\max_{1\leq i,j\leq kp}\envert[0]{\Psi_{T,{i,j}}-\Gamma_{i,j}}>t}
\leq 
k^2p^2\max_{1\leq i,j \leq kp}P\del[2]{\envert[0]{\Psi_{T,{i,j}}-\Gamma_{i,j}}>t}\label{dis}
\end{align}
Hence, it suffices to bound $P\del[2]{\envert[0]{\Psi_{T,{i,j}}-\Gamma_{i,j}}>t}$ appropriately for all $1\leq i,j\leq kp$. To this end note that by the stationarity of $y_t$
\begin{align*}
P\del[1]{\envert[0]{\Psi_{T,{i,j}}-\Gamma_{i,j}}>t}
=
P\del[1]{\envert[0]{(X'X)_{i,j}-E(X'X)_{i,j}}>tT}
\end{align*}
This further implies that it is enough to bound $P\del[1]{\envert[0]{v'X'Xv-E(v'X'Xv)}>tT}$ for any $t>0$ and column vector $v$ with $\enVert[0]{v}=1$ \footnote{Letting $v$ run over the standard basis vectors of $\mathbb{R}^{kp}$ yields the result for all diagonal elements. Choosing $v$ to contain only zeros except for $1/\sqrt{2}$ in the $i$th and $j$th position and thereafter only zeros except for $1/\sqrt{2}$ in the $i$th position and $-1/\sqrt{2}$ in the $j$th position some elementary calculations (available upon request) show that $P\del[1]{\envert[0]{(X'X)_{i,j}-E(X'X)_{i,j}}>tT}
$ for $i\neq j$ is bounded by two times $P\del[1]{\envert[0]{(X'X)_{i,i}-E(X'X)_{i,i}}>tT}$. The maximum on the right hand side in (\ref{dis}) is obtained for the off-diagonal elements. This explains the first $2$ on the right hand side in (\ref{display}).}. But for $U=Xv$ this probability equals $P\del[2]{\envert[1]{\enVert[0]{U}^2-E\enVert[0]{U}^2}>tT}$. $U$ is a linear transformation of a multivariate gaussian and hence gaussian itself with mean zero and covariance $Q:=E(UU')$. Hence, it follows from Lemma \ref{Wain}
\begin{align}
P\del[2]{\max_{1\leq i,j\leq kp}\envert[0]{\Psi_{T,{i,j}}-\Gamma_{i,j}}>t}
\leq 
2k^2p^2\del[2]{2\exp\del[1]{\frac{-t^2T}{8\enVert{Q}_{\ell_\infty}^2M^2}}+T\exp\del[0]{-M^2/2}}\label{display}
\end{align}

It remains to upper bound $\enVert{Q}_{\ell_\infty}=\max_{1\leq t\leq T}\sum_{s=1}^{T}|Q_{t,s}|$. For any pair of $1\leq s,t\leq T$ letting $\Gamma_{t-s}=E(Z_tZ_s')$ (clearly $\Gamma_0=\Gamma$) and writing $y_t$ in its companion form (as an VAR(1)) with companion matrix $F$: \footnote{Recall that $Z_{t}'=(y_{t-1}',...,y_{t-p}')$ is the $t$th row of $X$.}
\begin{align*}
|Q_{t,s}|
=
\envert{E(Z_t'vv'Z_s)}
=
\envert{E(v'Z_sZ_t'v)}
=
\envert{v'\Gamma_{s-t} v}
=
\begin{cases}
\envert[0]{v'F^{s-t}\Gamma v}&\text{ for }s\geq t\\
\envert[0]{v'\Gamma(F^{t-s})'v}=\envert[0]{v'F^{t-s}\Gamma v}&\text{ for }s<t
\end{cases}
\end{align*}
Hence, $|Q_{t,s}|=\envert[0]{v'F^{|t-s|}\Gamma v}$ for any pair of $1\leq s,t\leq T$. By the Cauchy-Schwarz inequality 
\begin{align*}
\envert[0]{v'F^{|t-s|}\Gamma v}
\leq
\enVert[0]{v'F^{|t-s|}}\enVert[0]{\Gamma v}
\leq
\enVert[0]{F^{|t-s|}}\enVert[0]{\Gamma}
\end{align*}
Putting things together yields (uniformly over $\cbr[0]{v:\enVert[0]{v}=1}$)
\begin{align*}
\enVert{Q}_{\ell_\infty}
=
\max_{1\leq t\leq T}\sum_{s=1}^{T}\enVert[0]{F^{|t-s|}}\enVert[0]{\Gamma}
\leq
2\enVert[0]{\Gamma}\sum_{i=0}^T\enVert[0]{F^{i}}
\end{align*}

\end{proof}

\begin{lemma}\label{REbound}
Let Assumption \ref{Ass1} be satisfied. Then, on 
\begin{align}
\mathcal{C}_T=\cbr[3]{\max_{1\leq i,j\leq T}\envert[0]{\Psi_{i,j}-\Gamma_{i,j}}\leq \frac{(1-q)\kappa_{\Gamma}^2(s)}{16s}}\label{CT}
\end{align}
one has for any $0<q<1$ and $s\in\cbr[0]{1,...,kp}$ that
\begin{itemize}
\item[i)] $\kappa_{\Psi_T}^2(s)\geq q\kappa_{\Gamma}^2(s)$ 
\item[ii)] $\phi_{\min}(\Psi_{J,J})\geq q \phi_{\min}(\Gamma_{J,J})$  
\end{itemize}
Finally, $P(\mathcal{C}_T)\geq 1-4k^2p^2\exp\del[2]{\frac{-\zeta T}{s^2\log(T)(\log(k^2p^2)+1)}}-2(k^2p^2)^{1-\log(T)}=1-\pi_q(s)$ for $\zeta=\frac{(1-q)^2\kappa_{\Gamma}^4}{4\cdot 16^3(\enVert[0]{\Gamma}\sum_{i=0}^T\enVert[0]{F^{i}})^2}$.
\end{lemma}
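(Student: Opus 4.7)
The plan is to split the argument into three pieces: the two deterministic conclusions (i) and (ii), which hold on any realization where the entrywise gap between $\Psi_T$ and $\Gamma$ is small enough, and the probabilistic lower bound on $P(\mathcal{C}_T)$, which comes from plugging a carefully chosen $t$ and auxiliary $M$ into Lemma \ref{CovBound}.

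For (i), I would invoke Lemma \ref{vdGB} directly with $A=\Gamma$ and $B=\Psi_T$: since $\Gamma$ satisfies RE($s$) with constant $\kappa_\Gamma^2(s)$ and, on $\mathcal{C}_T$, $\delta:=\max_{i,j}|\Psi_{T,i,j}-\Gamma_{i,j}|\leq (1-q)\kappa_\Gamma^2(s)/(16s)$, that lemma yields $\kappa_{\Psi_T}^2(s)\geq \kappa_\Gamma^2(s)-16s\delta \geq q\kappa_\Gamma^2(s)$. For (ii), a short direct perturbation bound works: for any unit vector $v$ supported on $J$ with $|J|\leq s$, $|v'(\Psi_T-\Gamma)_{J,J}v|\leq \delta \enVert{v}_{\ell_1}^2\leq s\delta$, so $\phi_{\min}(\Psi_{J,J})\geq \phi_{\min}(\Gamma_{J,J})-s\delta$. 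Combining this with the elementary observation that $\kappa_\Gamma^2(s)\leq \phi_{\min}(\Gamma_{J,J})$ (vectors supported on $J$ trivially satisfy the cone constraint in the definition of the restricted eigenvalue) and $s\delta\leq (1-q)\kappa_\Gamma^2(s)/16$ gives $\phi_{\min}(\Psi_{J,J})\geq \phi_{\min}(\Gamma_{J,J})\bigl(1-(1-q)/16\bigr)\geq q\phi_{\min}(\Gamma_{J,J})$, where the last inequality is a one-line check using $q\leq 1$.

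For the probability statement, I would apply Lemma \ref{CovBound} with $t=(1-q)\kappa_\Gamma^2(s)/(16s)$ and choose $M$ so as to balance the two terms on the right-hand side. Taking $M^2=2\log(T)\bigl(1+\log(k^2p^2)\bigr)$ collapses the second term to $T\exp(-M^2/2)=(k^2p^2)^{-\log T}$, so the contribution $2k^2p^2\cdot T\exp(-M^2/2)$ becomes exactly $2(k^2p^2)^{1-\log T}$. Substituting $t$ and $\enVert{Q}_{\ell_\infty}\leq 2\enVert{\Gamma}\sum_{i=0}^T\enVert{F^i}$ into the first exponential and rearranging identifies the exponent as $-\zeta T/[s^2\log(T)(\log(k^2p^2)+1)]$ with $\zeta=(1-q)^2\kappa_\Gamma^4/(4\cdot 16^3(\enVert{\Gamma}\sum_{i=0}^T\enVert{F^i})^2)$, and the prefactor $2k^2p^2\cdot 2$ gives the stated $4k^2p^2$.

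The main obstacle is really only bookkeeping: verifying that the perturbation bound in (ii) is sharp enough to exchange $\kappa_\Gamma^2(s)$ for $\phi_{\min}(\Gamma_{J,J})$ without losing the factor $q$, and matching the constants in the exponent with the exact $\zeta$ given in the statement. Once $M$ is tuned to equalize the polynomial-in-$T$ factor between the two tail contributions, everything else is a routine substitution.
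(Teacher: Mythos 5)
Your proposal is correct and follows essentially the same route as the paper: Lemma \ref{vdGB} for part (i), a direct perturbation bound on the $J\times J$ block combined with $\kappa_\Gamma^2(s)\leq\phi_{\min}(\Gamma_{J,J})$ for part (ii), and Lemma \ref{CovBound} with $t=(1-q)\kappa_\Gamma^2(s)/(16s)$ and $M^2=2\log(T)(\log(k^2p^2)+1)$ for the probability bound. The constants and the identification of $\zeta$ all check out.
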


\begin{proof}
Define $\kappa_{\Gamma}(s)=\kappa_{\Gamma}$ and $\kappa_{\Psi_T}^2(s)=\kappa_{\Psi_T}^2$. By Lemma \ref{vdGB} one has that $\kappa_{\Psi_T}^2\geq q\kappa_{\Gamma}^2$ if $\max_{1\leq i,j\leq T}\envert[0]{\Psi_{i,j}-\Gamma_{i,j}}\leq \frac{(1-q)\kappa_{\Gamma}^2}{16s}$. Furthermore, an argument similar to the one in Lemma \ref{vdGB} reveals that $\phi_{\min}(\Psi_{J,J})\geq q\phi_{\min}(\Gamma_{J,J})$ if the maximal entry of $\envert[0]{\Psi_{J,J}-\Gamma_{J,J}}$ is less than $\frac{1-q}{s}\phi_{\min}(\Gamma_{J,J})$. But note that (in the display below, the maximum is to be understood entrywisely)
\begin{align*}
\mathcal{C}_T\subseteq \cbr[3]{\max\envert[0]{\Psi_{J,J}-\Gamma_{J,J}}\leq\frac{1-q}{s}\phi_{\min}(\Gamma_{J,J})}
\end{align*}
such that  $\phi_{\min}(\Psi_{J,J})\geq q\phi_{\min}(\Gamma_{J,J})$ on $\mathcal{C}_T$. It remains to lower bound the measure of $\mathcal{C}_T$. Using $M^2=2\log(k^2p^2)\log(T)+2\log(T)$ in Lemma \ref{CovBound} yields
\begin{align}
&P\del[2]{\max_{1\leq i,j\leq kp}\envert[0]{\Psi_{T,{i,j}}-\Gamma_{i,j}}>\frac{(1-q)\kappa_{\Gamma}^2}{16s}}\notag\\
\leq
&2k^2p^22\exp\del[2]{\frac{-(1-q)^2\kappa_{\Gamma}^4T}{16^2s^28\enVert{Q}_{\ell_\infty}^2(2\log(k^2p^2)\log(T)+2\log(T))}}+2(k^2p^2)^{1-\log(T)}\notag\\
=
&4k^2p^2\exp\del[2]{\frac{-\zeta T}{s^2\log(T)(\log(k^2p^2)+1)}}+2(k^2p^2)^{1-\log(T)}:=\pi_q(s)\label{lemmabound}
\end{align}
Inserting the upper bound on $\enVert{Q}_{\ell_\infty}$ from Lemma \ref{CovBound} yields the lemma upon taking the complement.
\end{proof}

\begin{proof}[Proof of Theorem \ref{Thm3}]
As the results are equation by equation we shall focus on equation $i$ here but omit the subscript $i$ for brevity when no confusion arises. We will work on $\mathcal{B}_T\cap \mathcal{C}_T$ as defined in (\ref{BT}) and (\ref{CT}). By (\ref{IQ2}), Jensen's inequality and the restricted eigenvalue condition (which is applicable due to (\ref{IQ3})) 
\begin{align*}
\frac{1}{T}\enVert[1]{X(\hat{\beta}-\beta^*)}^2
\leq
4\lambda_T\enVert[1]{\hat{\beta}_{J}-\beta^*_{J}}_{\ell_1}
\leq 
4\lambda_T\sqrt{s}\enVert[1]{\hat{\beta}_{J}-\beta^*_{J}}
\leq
4\lambda_T\sqrt{s}\frac{\enVert[1]{X(\hat{\beta}-\beta^*)}}{\kappa_{\Psi_T}\sqrt{T}}
\end{align*}
Rearranging and using $\kappa_{\Psi_T}^2\geq q\kappa^2$, yields (\ref{REIQ1}). 
To establish (\ref{REIQ2}) use (\ref{IQ3}), Jensen's inequality, $\kappa_{\Psi_T}^2\geq q\kappa^2$, and (\ref{REIQ1}):
\begin{align*}
\enVert[1]{\hat{\beta}-\beta^*}_{\ell_1}
\leq
4\enVert[1]{\hat{\beta}_{J}-\beta^*_{J}}_{\ell_1}
\leq
4\sqrt{s}\enVert[1]{\hat{\beta}_{J}-\beta^*_{J}}
\leq
4\sqrt{s}\frac{\enVert[1]{X(\hat{\beta}-\beta^*)}}{\kappa_{\Psi_T}\sqrt{T}}
\leq
\frac{16}{q\kappa^2}s\lambda_T
\end{align*}
Regarding retaining all relevant variables, let $\beta_{\min}>\frac{16}{q\kappa_{i}^2}s_{i}\lambda_T$. If there exists a $j\in J$ such that $\hat{\beta}_j=0$, then $\enVert[0]{\hat{\beta}-\beta^*}_{\ell_1}\geq \beta_{\min}> \frac{16}{q\kappa_{i}^2}s_{i}\lambda_T$ which contradicts (\ref{REIQ2}). Hence, $\hat{\beta}_j\neq 0$ for all $j\in J$. 

Combining Lemmas \ref{Bound} and \ref{REbound}, $\mathcal{B}_T\cap \mathcal{C}_T$ is seen to have at least the stated probability. Regarding the last assertion,
\begin{align}
\mathcal{C}_T(\bar{s}):=\cbr[3]{\max_{1\leq i,j\leq T}\envert[0]{\Psi_{i,j}-\Gamma_{i,j}}\leq \frac{(1-q)\kappa_{\Gamma}^2(\bar{s})}{16\bar{s}}}\subseteq \cbr[3]{\max_{1\leq i,j\leq T}\envert[0]{\Psi_{i,j}-\Gamma_{i,j}}\leq \frac{(1-q)\kappa^2_{\Gamma}(s_i)}{16s_i}}\label{incl}
\end{align}
for all $i=1,...,k$. On $\mathcal{C}_T(\bar{s})$ it follows from Lemma \ref{REbound} that $\kappa_{\Psi_T}^2(s_i)\geq q\kappa_{\Gamma}^2(s_i)$ for all $i=1,...,k$ which is exactly what we used in the arguments above. Hence, (\ref{REIQ1}) and (\ref{REIQ2}) are a valid and no variables are excluded for all $i=1,...,k$ on $\mathcal{B}_T\cap\mathcal{C}_T(\bar{s})$ which has probability at least $1-2(k^2p)^{1-\ln(1+T)}-2(1+T)^{-1/A}-\pi_q(\bar{s})$ by Lemmas \ref{Bound} and \ref{REbound}. 
\end{proof}

\begin{proof}[Proof of Lemma \ref{OLSO}]
As the results are equation by equation we shall focus on equation $i$ here but omit the subscript $i$ for brevity. Let $X_{J}$ denote the matrix consisting of the columns of $X$ indexed by $J$. Then for $1\leq i_h,j_h\leq k$ and $1\leq l_h\leq p$ on
\begin{align*}
\tilde{\mathcal{B}}_T=\left\{\max_{1\leq h\leq s}\envert{\frac{1}{T}\sum_{t=1}^Ty_{t-l_h,i_h}\epsilon_{t,j_h}}<\frac{\tilde{\lambda}_T}{2}\right\}
\end{align*}
one has, regarding $\left(\frac{1}{T}X_{J}'X_{J}\right)^{-1}$ as a bounded linear operator from $\ell_2(\mathbb{R}^s)$ to $\ell_2(\mathbb{R}^s)$ with induced operator norm given by $\phi_{\max}\left((\frac{1}{T}X_{J}'X_{J})^{-1}\right)=1/\phi_{\min}(\frac{1}{T}X_{J}'X_{J})$,
\begin{align*}
\enVert[1]{\hat{\beta}_{OLS}-\beta^*_J}_{\ell_1}
&\leq
\sqrt{s}\enVert{\left(\frac{1}{T}X_{J}'X_{J}\right)^{-1}\frac{1}{T}X_J'\epsilon}_{\ell_2}\\
&\leq 
\sqrt{s}\enVert{\left(\frac{1}{T}X_{J}'X_{J}\right)^{-1}}_{\ell_2}\enVert{\frac{1}{T}X_{J}'\epsilon}_{\ell_2}\\
&\leq
s\enVert{\left(\frac{1}{T}X_{J}'X_{J}\right)^{-1}}_{\ell_2}\enVert{\frac{1}{T}X_{J}'\epsilon}_{\ell_\infty}\\
&\leq
\frac{s}{2\phi_{\min}(\Psi_{J,J})}\tilde{\lambda}_T
\end{align*} 
Hence, it follows by Lemma \ref{REbound} that on $\mathcal{\tilde{B}}_T\cap\mathcal{C}_T$ 
\begin{align*}
\enVert[1]{\hat{\beta}_{OLS}-\beta^*_J}_{\ell_1}
&\leq
\frac{s}{2q\phi_{\min}(\Gamma_{J,J})}\tilde{\lambda}_T
\end{align*}
That the probability of $\tilde{\mathcal{B}}_T$ must be at least $1-2s^{1-\ln(1+T)}-2(1+T)^{-1/A}$ follows from a slight modification of Lemmas \ref{Orlicz} and \ref{Bound} using that one only has to bound terms associated with relevant variables. Hence, $\mathcal{\tilde{B}}_T\cap\mathcal{C}_T$ has at least the stated probability by combination with Lemma \ref{REbound}.
\end{proof}

\begin{proof}[Proof of Corollary \ref{Corthm3}]
Sum (\ref{REIQ2}) over $i=1,...,k$ which are all valid simultaneously with probability at least $1-2(k^2p)^{1-\ln(1+T)}-2(1+T)^{-1/A}-\pi_q(\bar{s})$ by Theorem \ref{Thm3}.
\end{proof}

\begin{lemma}\label{CTasym}
Assume that $k,p\in O(\exp(T^a))$ and $s\in O(T^b)$ for $2a+2b<1$. If $\kappa^2>c$ for some $c>0$ and $\sup_T\enVert[0]{\Gamma}\sum_{i=0}^T\enVert[0]{F^{i}}<\infty$. Then, $P(\mathcal{C}_T)\rightarrow 1$.
\end{lemma}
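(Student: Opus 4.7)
The plan is to invoke the non-asymptotic bound $P(\mathcal{C}_T)\geq 1-\pi_q(s)$ established in Lemma \ref{REbound} and show that $\pi_q(s)\to 0$ under the stated growth conditions. Recall
\begin{align*}
\pi_q(s)=4k^2p^2\exp\!\left(\frac{-\zeta T}{s^2\log(T)(\log(k^2p^2)+1)}\right)+2(k^2p^2)^{1-\log(T)},
\end{align*}
with $\zeta=\frac{(1-q)^2\kappa_{\Gamma}^4}{4\cdot 16^3(\enVert[0]{\Gamma}\sum_{i=0}^T\enVert[0]{F^{i}})^2}$, and it suffices to show that each of the two summands vanishes.

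The first step is to verify that $\zeta$ stays bounded away from zero along the sequence. By assumption $\kappa^2\geq c>0$ and $\sup_T\enVert[0]{\Gamma}\sum_{i=0}^T\enVert[0]{F^{i}}<\infty$, hence there exists $\zeta_0>0$ such that $\zeta\geq\zeta_0$ for all $T$. This uniform lower bound on $\zeta$ is what allows the exponent in the first term of $\pi_q(s)$ to be compared with the growth of the prefactor.

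The main step is to analyze the first summand of $\pi_q(s)$. Using $k,p\in O(\exp(T^a))$ we obtain $\log(k^2p^2)+1=O(T^a)$, and $s\in O(T^b)$ gives $s^2=O(T^{2b})$. Together with $\log T$ in the denominator, the exponent becomes
\begin{align*}
\frac{-\zeta T}{s^2\log(T)(\log(k^2p^2)+1)}\leq -c_1\,\frac{T^{1-2b-a}}{\log T}
\end{align*}
for some $c_1>0$ and all $T$ large. Taking logarithms of the first summand yields $\log 4+O(T^a)-c_1T^{1-2b-a}/\log T$. The assumption $2a+2b<1$ is equivalent to $1-2b-a>a$, so the negative exponential term dominates and the whole first summand tends to $0$. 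For the second summand, $(k^2p^2)^{1-\log T}=\exp\!\bigl((1-\log T)\cdot O(T^a)\bigr)$; since $1-\log T\to -\infty$, this term vanishes for every $a\geq 0$ (when $a=0$ it decays polynomially, when $a>0$ it decays like $\exp(-c_2 T^a\log T)$).

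Combining the two estimates, $\pi_q(s)\to 0$, and therefore $P(\mathcal{C}_T)\to 1$. The only mildly delicate point is the bookkeeping on exponents: one has to notice that it is the double factor $2a$ (coming from $T^a$ in both the prefactor $k^2p^2$ and inside the exponent via $\log(k^2p^2)$) together with $2b$ from $s^2$ that must be beaten by the linear $T$ in the numerator, which is precisely the content of the condition $2a+2b<1$.
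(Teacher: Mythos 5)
Your proposal is correct and follows essentially the same route as the paper: both invoke the lower bound $P(\mathcal{C}_T)\geq 1-\pi_q(s)$ from Lemma \ref{REbound} and reduce the claim to the exponent bookkeeping $2a+2b<1$, the only (cosmetic) difference being that you take logarithms of the first summand directly while the paper rewrites it as a power of $k^2p^2$ and checks that $s^2\log(T)\log(k^2p^2)(\log(k^2p^2)+1)\in o(T)$. Your explicit remark that $\zeta$ is bounded away from zero under the stated assumptions is a point the paper uses only implicitly.
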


\begin{proof}
By Lemma \ref{REbound}, $P(\mathcal{C}_T)\geq 1-4k^2p^2\exp\del[2]{\frac{-\zeta T}{s^2\log(T)(\log(k^2p^2)+1)}}-2(k^2p^2)^{1-\log(T)}=1-\pi_q(s)$ for $\zeta=\frac{(1-q)^2\kappa_{\Gamma}^4}{4\cdot 16^3(\enVert[0]{\Gamma}\sum_{i=0}^T\enVert[0]{F^{i}})^2}$. It remains to be shown that $4k^2p^2\exp\del[2]{\frac{-\zeta T}{s^2\log(T)(\log(k^2p^2)+1)}}\rightarrow 0$. Noting that \begin{align*}
4k^2p^2\exp\del[2]{\frac{-\zeta T}{s^2\log(T)(\log(k^2p^2)+1)}}
=
4(k^2p^2)^{1-{\frac{-\zeta T}{s^2\log(T)\log(k^2p^2)(\log(k^2p^2)+1)}}}
\end{align*}
it suffices to show that $s^2\log(T)\log(k^2p^2)(\log(k^2p^2)+1)\in o(T)$. Now,
\begin{align*}
s^2\log(T)\log(k^2p^2)(\log(k^2p^2)+1)\in O(T^{2b}\log(T)T^a(T^a+1))\subseteq O(T^{2a+2b}\log(T))\subseteq o(T)
\end{align*}
since $2a+2b<1$.
\end{proof}

\begin{proof}[Proof of Theorem \ref{LASSOAsym}]
As the results are equation by equation we shall focus on equation $i$ here but omit the subscript $i$ for brevity. Observe that $s\lambda_T\rightarrow 0$ implies $\lambda_T<1$ from a certain step and onwards and so $s\lambda_T^2\rightarrow 0$ . Hence, i) and ii) follow from (\ref{REIQ1}) and (\ref{REIQ2}) of Theorem \ref{Thm3} if we show that $s\lambda_T\rightarrow 0$ and that the probability with which these hold tends to one.

First we show that $s\lambda_T\rightarrow 0$. $k,p\in O(e^{T^a})$ for some $a\geq 0$ implies that $1+k,1+p\in O(e^{T^a})$ since $e^{T^a}$ is bounded away from 0 (it tends to $\infty$ for $a>0$). Hence\footnote{By the definition of $\lambda_T=\sqrt{8\ln(1+T)^5\ln(1+k)^4\ln(1+p)^2\ln(k^2p)\sigma_T^4/T}$ one has that $\lambda_T\in O\left(\sqrt{\frac{\ln(1+T)^5T^{4a}T^{2a}(2T^a+T^a)}{T}}\right)= O\left(\sqrt{\frac{\ln(1+T)^5T^{7a}}{T}}\right)$.\label{footnotelambda}}, 
\begin{align}
s^2\lambda_T^2\in O\left(T^{2b}\frac{\ln(1+T)^5T^{4a}T^{2a}(2T^a+T^a)}{T}\right)
=
O\left(\ln(1+T)^5T^{7a+2b-1}\right) \subseteq o(1)\label{slambdaorder}
\end{align}
Noting that (\ref{REIQ1}) and (\ref{REIQ2}) hold on $\mathcal{B}_T\cap \mathcal{C}_T$ which is seen to have measure one asymptotically by Lemmas \ref{Bound} and \ref{CTasym} completes the proof of part i) and ii).
iii) follows from the fact that on $\mathcal{B}_T\cap \mathcal{C}_T$ no relevant variables are excluded if $\beta_{\min}>\frac{16}{qc^2}s\lambda_T$ (as argued in the proof of Theorem \ref{Thm3}). Since $\mathcal{B}_T\cap \mathcal{C}_T$ has probability 1 asymptotically this completes the proof.
\end{proof}

\begin{proof}[Proof of Theorem \ref{Lassoloc}]
As the results are equation by equation we shall focus on equation $i$ here but omit the subscript $i$ for brevity. The proof follows the same idea as that of Theorem \ref{LASSOAsym}. Hence, we show that the probability with which (\ref{REIQ2}) is valid tends to one and that the right hand side of the inequality tends to zero. To show the latter it suffices that $s\lambda_T\to 0$ which in turns is implied by $\sigma_T^4/T\to 0$. Since $k$ is fixed $\max_{1\leq i\leq k}\sigma_{i,\epsilon}$ is bounded and to control $\sigma_T^4$ it is enough to control $\max_{1\leq i\leq k}\sigma_{i,y}$ which may increase as $\rho=1-\frac{1}{T^\alpha}$ approaches one. Letting $w_t=(\epsilon_{t,1},...,\epsilon_{t,k},0,...,0)'$ be $kp\times 1$ one may write the VAR($p$) in its companion form $Z_t=FZ_{t-1}+w_t$ such that $\Gamma=E(Z_tZ_t')=\sum_{i=0}^\infty F^i\Omega {F^i}'$ where $\Omega=E(w_tw_t')$. Thus, $\sigma_{i,y}^2$ may be found as the $i$th diagonal element of $\Gamma$. Hence, letting $e_i$ be the $i$th basis vector in $\mathbb{R}^{kp}$ 
\begin{align}
\sigma_{i,y}^2
=
e_i'\Gamma e_i\leq \max_{\enVert{x}=1}x'\Gamma x
=
\enVert{\Gamma}\leq \sum_{i=0}^\infty\enVert[1]{F^i}\enVert{\Omega}\enVert[1]{{F^i}'}
=
\max_{1\leq j\leq k}\sigma_{j,\epsilon}^2\sum_{i=0}^\infty\enVert[1]{F^i}^2\label{Gamma}
\end{align}  
Next, since $F$ is assumed to have $kp$ distinct eigenvalues we can diagonalize it as $F=V^{-1}DV$ (see \cite{hornj90}) where the columns of $V$ are the linearly independent eigenvectors of $F$. Note that, for all $i\geq 1$, $F^i=V^{-1}D^iV$ and so $\enVert[0]{F^i}\leq \enVert[0]{V^{-1}}\enVert[0]{D^i}\enVert[0]{V}$. Clearly $\enVert[0]{D^i}=\rho^i$. Since we are considering a fixed model $\enVert[0]{V}$ is some finite number and because $V$ has linearly independent columns $V^{-1}$ is well defined with some finite $\ell_2$-norm. In total, $\enVert[0]{F^i}\leq C\rho^i$ for some $C>0$. Using this in (\ref{Gamma}) yields
\begin{align}
\sigma_{i,y}^2
\leq 
C\max_{1\leq j\leq k}\sigma_{j,\epsilon}^2\sum_{i=0}^\infty (\rho^2)^i
=
\frac{C}{1-\rho^2}\leq \frac{C}{1-\rho}
=
CT^\alpha \label{rho}
\end{align}
where the second estimate has used $\rho<1$ and the finite maximum $\max_{1\leq j\leq k}\sigma_{j,\epsilon}^2$ has been merged into the constant. Since the above display holds uniformly in $1\leq i\leq k$ we conclude $\sigma_T^4\in O(T^{2\alpha})$. Hence, since $\alpha<1/4$, one has $\sigma_T^4/T\to 0$.

To show that the probability with which (\ref{REIQ2}) is valid tends to one we only need to show that $\pi_q(s)\to 0$ for which we in turn show $4k^2p^2\exp\del[2]{\frac{-\zeta T}{s_i^2\log(T)(\log(k^2p^2)+1)}}\to 0$ with $\zeta=\frac{(1-q)^2\kappa_{i}^4}{4\cdot 16^3(\enVert[0]{\Gamma}\sum_{i=0}^T\enVert[0]{F^{i}})^2}$. $\enVert[0]{F^i}\leq C\rho^i$ for some $C>0$ yields
\begin{align*}
\sum_{i=0}^T\enVert[0]{F^{i}}
\leq 
\sum_{i=0}^\infty\enVert[0]{F^{i}} 
\leq
\frac{C}{1-\rho}=CT^\alpha
\in O(T^\alpha)
\end{align*}
Since (\ref{Gamma}) and (\ref{rho}) also yield $\enVert[0]{\Gamma}\in O(T^\alpha)$ we conclude $(\enVert[0]{\Gamma}\sum_{i=0}^T\enVert[0]{F^{i}})^2\in O(T^{4\alpha})$. Because $k,p$ and $s$ are fixed we conclude that $\pi_q(s)\to 0$ for $\alpha<1/4$.

\end{proof}


\begin{proof}[Proof of Theorem \ref{systemlassoasym}]
By Corollary \ref{Corthm3} it suffices to show that $k\bar{s}\lambda_T\rightarrow 0$ and that the probability with which (\ref{REIQcor2}) holds tends to one. Consider first $k\bar{s}\lambda_T$:
\begin{align}
k^2\bar{s}^2\lambda_T^2\in O\left(T^{2b}T^{2c}\frac{\ln(1+T)^5\ln(T)^4T^{2a}(\ln(T)+T^a)}{T}\right)
 \subseteq o(1)\label{slambdaorder2}
\end{align}

Since (\ref{REIQcor2}) holds on $\mathcal{B}_T\cap \mathcal{C}_T(\bar{s})$ where $\mathcal{C}_T(\bar{s})$ is defined in (\ref{incl}) and $P(\mathcal{B}_T)\rightarrow 1$ it remains to be shown that $P(\mathcal{C}_T(\bar{s}))\rightarrow 1$. This follows from an argument similar to the one in Lemma \ref{CTasym}. 
\end{proof}

\begin{lemma}\label{BoundY}
Let Assumption \ref{Ass1} be satisfied and define 
\begin{align}
\mathcal{D}_T=\left\{\max_{1\leq i,j\leq k}\max_{1\leq l,\tilde{l}\leq p}\envert[4]{\frac{1}{T}\sum_{t=1}^Ty_{t-l,i}y_{t-\tilde{l},j}}< K_T\right\}\label{DT}
\end{align}
for $K_T=\ln(1+k)^2\ln(1+p)^2\ln(T)\sigma_T^2$. Then,
\begin{align}
P(\mathcal{D}_T)\geq 1-2T^{-1/A}\label{DTprob}
\end{align}
for some constant $A>0$.
\end{lemma}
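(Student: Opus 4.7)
The plan is to follow the template of Lemmas \ref{Orlicz} and \ref{Bound}, using Orlicz-norm maximal inequalities adapted from a $y\cdot\epsilon$ cross-term to a $y\cdot y$ cross-term. The central reduction is the trivial bound
$$\left|\frac{1}{T}\sum_{t=1}^{T}y_{t-l,i}y_{t-\tilde{l},j}\right|\;\leq\;\max_{1\leq t\leq T}|y_{t-l,i}y_{t-\tilde{l},j}|,$$
which implies $\mathcal{D}_T^c\subseteq\{\max_{t,i,j,l,\tilde{l}}|y_{t-l,i}y_{t-\tilde{l},j}|\geq K_T\}$. Unlike in Lemma \ref{Bound}, no martingale concentration step is required: $K_T$ carries no vanishing $1/\sqrt{T}$ factor, so a pointwise-maximum bound is already of the right order.

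First I would verify that each individual product has a bounded sub-exponential Orlicz norm. For $\psi(x)=e^x-1$ and any fixed tuple $(t,i,j,l,\tilde{l})$, splitting the event as in Lemma \ref{Orlicz} and applying the standard gaussian tail estimate gives
$$P(|y_{t-l,i}y_{t-\tilde{l},j}|>x)\;\leq\;P(|y_{t-l,i}|>\sqrt{x})+P(|y_{t-\tilde{l},j}|>\sqrt{x})\;\leq\;4\exp\!\left(-\frac{x}{2\sigma_T^2}\right),$$
so by Lemma 2.2.1 of \cite{vdVW96} one has $\|y_{t-l,i}y_{t-\tilde{l},j}\|_\psi\leq 10\sigma_T^2$ uniformly in the five indices. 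Next, I would apply Lemma \ref{OLemma} iteratively (once each for the maxes over $t,i,j,l,\tilde{l}$, of respective cardinalities $T,k,k,p,p$), exactly as in display (\ref{ROrlicz}), to obtain
$$\|\max_{t,i,j,l,\tilde{l}}|y_{t-l,i}y_{t-\tilde{l},j}|\|_\psi\;\leq\;A\ln(1+T)\ln(1+k)^2\ln(1+p)^2\sigma_T^2\;=:\;g(T)$$
for a universal constant $A=10K^5$.

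The final step is the Markov-type conversion of this Orlicz norm into a tail bound, using $1\wedge\psi(x)^{-1}\leq 2e^{-x}$ exactly as at the end of the proof of Lemma \ref{Orlicz}:
$$P\bigl(\max_{t,i,j,l,\tilde{l}}|y_{t-l,i}y_{t-\tilde{l},j}|\geq K_T\bigr)\;\leq\;2\exp\!\bigl(-K_T/g(T)\bigr).$$
The remaining task is to calibrate constants so that $K_T/g(T)\geq\ln(T)/A$, which yields the claimed $2T^{-1/A}$ bound, combined with the inclusion $\mathcal{D}_T^c\subseteq\{\max\geq K_T\}$.

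The main obstacle is precisely this final calibration. The numerator $K_T$ and denominator $g(T)$ agree in their $\ln(1+k)^2\ln(1+p)^2\sigma_T^2$ factors, but $g(T)$ contains an additional $\ln(1+T)$ that must be dominated by the $\ln(T)$ appearing in $K_T$. This is a bookkeeping exercise of the same flavour as the log-tracking performed at the end of the proof of Lemma \ref{Bound}; in particular, it may require applying Lemma \ref{OLemma} in a slightly more refined way (for example, bundling some of the index sets before taking the Orlicz maximum, so that the leading logarithmic factor is $\ln(1+Tk^2p^2)$ rather than $\ln(1+T)\ln(1+k)^2\ln(1+p)^2$), so that the ratio $K_T/g(T)$ is bounded below by a constant multiple of $\ln(T)$ for $T$ large. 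No new probabilistic ideas beyond the Orlicz-norm toolkit already developed in Lemmas \ref{Orlicz} and \ref{Bound} are needed.
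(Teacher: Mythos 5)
Your reduction in the very first step is where the argument breaks down, and the calibration problem you flag at the end is not a bookkeeping issue but a genuine loss of a logarithmic factor that cannot be recovered. By bounding $\envert[1]{\frac{1}{T}\sum_{t=1}^T y_{t-l,i}y_{t-\tilde{l},j}}$ by $\max_{1\leq t\leq T}|y_{t-l,i}y_{t-\tilde{l},j}|$ you introduce a maximum over $T$ sub-exponential variables, whose Orlicz norm necessarily picks up a factor $\psi^{-1}(T)\asymp\ln(1+T)$ from Lemma \ref{OLemma}. Your $g(T)$ therefore contains $\ln(1+T)\ln(1+k)^2\ln(1+p)^2\sigma_T^2$, while $K_T$ contains only $\ln(T)\ln(1+k)^2\ln(1+p)^2\sigma_T^2$, so $K_T/g(T)\to 1/A$ is a \emph{constant} and the Markov step delivers $P(\mathcal{D}_T^c)\leq 2e^{-1/A}$, which is useless. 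The bundling fix does not help: with a single maximum over all $Tk^2p^2$ variables the leading factor becomes $\ln(1+Tk^2p^2)\geq\ln(1+T)$, and for bounded $k,p$ (e.g.\ a univariate AR) the ratio $K_T/g(T)$ is still bounded, so no choice of grouping yields the required $\ln(T)/A$ lower bound. A crude union bound over $t$ combined with the raw sub-exponential tail fails for the same reason. The threshold $K_T$ is simply one factor of $\ln(T)$ too small to control the pointwise maximum over time with probability $1-2T^{-1/A}$.

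The paper's proof avoids taking any maximum over $t$. It first bounds $\enVert[1]{y_{t-l,i}y_{t-\tilde{l},j}}_\psi\leq 10\sigma_T^2$ as you do, but then applies the triangle inequality for the Orlicz norm to the time average itself,
\begin{align*}
\enVert[3]{\frac{1}{T}\sum_{t=1}^T y_{t-l,i}y_{t-\tilde{l},j}}_{\psi}\leq\frac{1}{T}\sum_{t=1}^T\enVert[1]{y_{t-l,i}y_{t-\tilde{l},j}}_{\psi}\leq 10\sigma_T^2,
\end{align*}
so that each of the $k^2p^2$ averages is a single random variable with Orlicz norm free of any $T$-dependence. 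Lemma \ref{OLemma} is then applied only to the maximum over the $k^2p^2$ index combinations, giving the bound $A\ln(1+k)^2\ln(1+p)^2\sigma_T^2$ with no $\ln(1+T)$ factor, and the entire $\ln(T)$ in $K_T$ is then available to drive the exponential tail down to $2T^{-1/A}$. Note that no cancellation or martingale structure is being exploited here (the products $y_{t-l,i}y_{t-\tilde{l},j}$ have non-zero mean); the whole point is that averaging, unlike maximizing, does not inflate the Orlicz norm. Your proof would be repaired by replacing your first display with this triangle-inequality step and deleting the maximum over $t$ throughout.
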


\begin{proof}
The proof is based on the same idea as in Lemma \ref{Orlicz} in its use of Orlicz norms. First bound
\begin{align*}
\enVert[3]{\max_{1\leq i,j\leq k}\max_{1\leq l, \tilde{l}\leq p} \envert[2]{\frac{1}{T}\sum_{t=1}^Ty_{t-l,i}y_{t-\tilde{l},j}}}_{\psi}
\end{align*}
where $\enVert[0]{\cdot}_{\psi}$ denotes the same Orlicz norm as in Lemma \ref{Orlicz}. To this end, notice that by the gaussianity of the $y_{t-l,i}$ for any $x>0$
\begin{align*}
&P\left(|y_{t-l,i}y_{t-\tilde{l},j}|\geq x\right)\leq P\left(|y_{t-l,i}|\geq\sqrt{x}\right)+P\left(|y_{t-\tilde{l},j}|\geq\sqrt{x}\right)\\
&\leq 2\exp(-x/\sigma^2_{i,y})+2\exp(-x/\sigma^2_{j,y})\leq 4\exp(-x/\sigma_T^2)
\end{align*}
Hence, $\cbr[0]{y_{t-l,i}y_{t-\tilde{l},j},\ t=1,...,T}$ has subexponential tails and it follows from Lemma 2.2.1 in \cite{vdVW96} that $\enVert[1]{y_{t-l,i}y_{t-\tilde{l},j}}_{\psi}\leq 10\sigma_T^2$. This implies that
\begin{align*}
\enVert[3]{\frac{1}{T}\sum_{t=1}^Ty_{t-l,i}y_{t-\tilde{l},j}}_{\psi}
\leq
\frac{1}{T}\sum_{t=1}^T\enVert{{y_{t-l,i}y_{t-\tilde{l},j}}}_\psi
\leq
10\sigma_T^2
\end{align*}
By Lemma \ref{OLemma} this implies that
\begin{align*}
\enVert[4]{\max_{1\leq i,j\leq k}\max_{1\leq l, \tilde{l}\leq p} \frac{1}{T}\sum_{t=1}^Ty_{t-l,i}y_{t-\tilde{l},j}}_{\psi} 
&\leq
K^4\ln(1+k)^2\ln(1+p)^210\sigma_T^2\\
&=
A\ln(1+k)^2\ln(1+p)^2\sigma_T^2 
\end{align*}
where $A=10K^4$. By the same trick as in Lemma \ref{Orlicz}
\begin{align*}
P\left(\max_{1\leq i,j\leq k}\max_{1\leq l, \tilde{l}\leq p} \envert[4]{\frac{1}{T}\sum_{t=1}^Ty_{t-l,i}y_{t-\tilde{l},j}}\geq K_T\right)
\leq
2\exp(-\ln(T)/A)=2T^{-1/A}
\end{align*}
\end{proof}

\begin{proof}[Proof of Theorem \ref{AdaLasso}]
As the results are equation by equation we shall focus on equation $i$ here but omit the subscript $i$ for brevity. Set $w=(1/|\hat{\beta}_1|,...,1/|\hat{\beta}_{kp}|)$ and $b=(\sgn(\beta^*_j)w_j)_{j\in J}$.
From \cite{zhou09} $\sgn(\hat{\beta})=\sgn(\beta^*)$ if and only if
\begin{align}
\envert{\Psi_{j,J}(\Psi_{J,J})^{-1}\left(\frac{X_{J}'\epsilon}{T}-\lambda_Tb\right)-\frac{X_{j}'\epsilon}{T}}\label{FOC1}
\leq
\lambda_Tw_{j}
\end{align}
for all $j\in J^c$ and
\begin{align}
\sgn\left(\beta^*_{J}+(\Psi_{J,J})^{-1}\left[\frac{X_{J}'\epsilon}{T}-\lambda_Tb\right]\right)
=
\sgn(\beta^*_{J})\label{FOC2}
\end{align}

We shall be working on $\mathcal{B}_T\cap\mathcal{C}_T\cap \mathcal{D}_T$ where each of the sets are defined in (\ref{BT}), (\ref{CT}), and (\ref{DT}), respectively. Consider (\ref{FOC1}) for a given $j\in {J}^c$. By the triangle inequality it suffices to show that
\begin{align}
\envert{\Psi_{j,J}(\Psi_{J,J})^{-1}\left(\frac{X_{J}'\epsilon}{T}-\lambda_Tb\right)}+\envert{\frac{X_{j}'\epsilon}{T}}\leq \lambda_Tw_{j}\label{FOC1Suff}
\end{align}
Bound the first term on the left hand side as follows:
\begin{align*}
\envert{\Psi_{j,J}(\Psi_{J,J})^{-1}\left(\frac{X_J'\epsilon}{T}-\lambda_Tb\right)}
&\leq
\enVert{\Psi_{j,J}(\Psi_{J,J})^{-1}}_{\ell_1}\enVert{\frac{X_J'\epsilon}{T}-\lambda_Tb}_{\ell_\infty}\\
&\leq
\sqrt{s}\enVert{\Psi_{j,J}(\Psi_{J,J})^{-1}}_{\ell_2}\left(\enVert[3]{\frac{X_{J}'\epsilon}{T}}_{\ell_\infty}+\enVert{\lambda_Tb}_{\ell_\infty}\right)
\end{align*}
Considering $(\Psi_{J,J})^{-1}$ as a bounded linear operator $\ell_2(\mathbb{R}^s)\rightarrow \ell_2(\mathbb{R}^s)$, the induced operator norm is given by $\phi_{\max}((\Psi_{J,J})^{-1})=1/\phi_{\min}(\Psi_{J,J})$ and so
\begin{align*}
\enVert[1]{\Psi_{j,J}(\Psi_{J,J})^{-1}}_{\ell_2}
\leq
\frac{\enVert[1]{\Psi_{j,J}}_{\ell_2}}{\phi_{\min}(\Psi_{J,J})}
\leq
\frac{\sqrt{s}\enVert[1]{\Psi_{j,J}}_{\ell_\infty}}{\phi_{\min}(\Psi_{J,J})}
\leq
\frac{\sqrt{s}K_T}{q\phi_{\min}(\Gamma_{J,J})}
\end{align*}
where the last estimate holds on $\mathcal{C}_T\cap \mathcal{D}_T$. By Lemma \ref{Bound} it follows that on $\mathcal{B}_T$
\begin{align}
\enVert[2]{\frac{X_{J}'\epsilon}{T}}_{\ell_\infty}\leq \frac{\lambda_T}{2}\label{AL1}
\end{align}
Next, since $\beta_{\min}\geq 2\enVert[0]{\hat{\beta}-\beta^*}_{\ell_1}$ one has for all $j\in J$,
\begin{align*}
|\hat{\beta}_j|
\geq 
|\beta_j^*|-|\hat{\beta}_j-\beta_j^*|
\geq
\beta_{\min}-\enVert[0]{\hat{\beta}_j-\beta_j^*}_{\ell_1}
\geq \beta_{\min}/2
\end{align*}
one gets
\begin{align}
&\enVert{\lambda_Tb}_{\ell_\infty}
=
\enVert{\lambda_Tw_{J}}_{\ell_\infty}
=
\lambda_T\max_{j\in J}\envert[3]{\frac{1}{\hat{\beta}_j}}
\leq
\frac{2\lambda_T}{\beta_{\min}}\label{AL2}.
\end{align}
Lastly, on $\mathcal{B}_T$,
\begin{align*}
\envert[3]{\frac{X_j'\epsilon}{T}}\leq \frac{\lambda_T}{2}
\end{align*}
for every $j\in {J}^c$. Hence, uniformly in $j\in {J}^c$,
\begin{align*}
\envert[3]{\Psi_{j,J}(\Psi_{J,J})^{-1}\left(\frac{X_{J}'\epsilon}{T}-\lambda_Tb\right)}+\envert[3]{\frac{X_j'\epsilon}{T}}
\leq
\frac{sK_T}{q\phi_{\min}(\Gamma_{J,J})}\left(\frac{\lambda_T}{2}+\frac{2\lambda_T}{\beta_{\min}}\right)+\frac{\lambda_T}{2}
\end{align*} 
Now bound the right hand side in (\ref{FOC1Suff}) from below. For every $j\in {J}^c$
\begin{align*}
|\lambda_Tw_{j}|=\lambda_T\frac{1}{|\hat{\beta}_j|}
\geq 
\lambda_T\frac{1}{\enVert[0]{\hat{\beta}-\beta^*}_{\ell_1}}
\end{align*}
This implies that (\ref{FOC1Suff}), and hence (\ref{FOC1}), is satisfied if 
\begin{align*}
\frac{sK_T}{q\phi_{\min}(\Gamma_{J,J})}\left(\frac{\lambda_T}{2}+\frac{2\lambda_T}{\beta_{\min}}\right)+\frac{\lambda_T}{2}
\leq
\lambda_T\frac{1}{\enVert[0]{\hat{\beta}-\beta^*}_{\ell_1}}
\end{align*}
or equivalently
\begin{align*}
\frac{sK_T}{q\phi_{\min}(\Gamma_{J,J})}\left(\frac{1}{2}+\frac{2}{\beta_{\min}}\right)\enVert[0]{\hat{\beta}-\beta^*}_{\ell_1}+\frac{\enVert[0]{\hat{\beta}-\beta^*}_{\ell_1}}{2}
\leq
1
\end{align*}
which is (\ref{AdaLasso1}). To verify (\ref{AdaLasso2}) it suffices to show that
\begin{align}
\enVert[4]{(\Psi_{J,J})^{-1}\left(\frac{X_{J}'\epsilon}{T}-\lambda_Tb\right)}_{\ell_\infty}
\leq
\beta_{\min}\label{FOC2Suff}
\end{align}
Considering $(\Psi_{J,J})^{-1}$ as a bounded linear operator $\ell_\infty(\mathbb{R}^s)\rightarrow\ell_\infty(\mathbb{R}^s)$ it follows that:
\begin{align*}
\enVert[3]{(\Psi_{J,J})^{-1}\left(\frac{X_{J}'\epsilon}{T}-\lambda_Tb\right)}_{\ell_\infty}
&\leq
\enVert[1]{(\Psi_{J,J})^{-1}}_{\ell_\infty}\enVert[3]{\frac{X_{J}'\epsilon}{T}-\lambda_Tb}_{\ell_\infty}\\
&\leq
\sqrt{s}\enVert[1]{(\Psi_{J,J})^{-1}}_{\ell_2}\left(\enVert[3]{\frac{X_{J}'\epsilon}{T}}_{\ell_\infty}+\enVert{\lambda_Tb}_{\ell_\infty}\right)\\
&\leq
\frac{\sqrt{s}}{q\phi_{\min}(\Gamma_{J,J})}\left(\frac{\lambda_T}{2}+\frac{2\lambda_T}{\beta_{\min}}\right)
\end{align*}
where the second estimate uses that $\enVert[1]{(\Psi_{J,J})^{-1}}_{\ell_\infty}\leq\sqrt{s}\enVert[1]{(\Psi_{J,J})^{-1}}_{\ell_2}$, c.f. \cite{hornj90} page 314, and the last estimate follows from (\ref{AL1}) and (\ref{AL2}) and the fact that we are working on $\mathcal{C}_T$. Inserting into (\ref{FOC2Suff}) completes the proof since $P(\mathcal{B}_T\cap \mathcal{C}_T\cap \mathcal{D}_T)$ has the desired lower bound by Lemmas \ref{Bound}, \ref{REbound}, and \ref{BoundY}. 
\end{proof}

\begin{proof}[Proof of Theorem \ref{AdaLASSOAsym}]
As the results are equation by equation we shall focus on equation $i$ here but omit the subscript $i$ for brevity. We continue to work on the set $\mathcal{B}_T\cap \mathcal{C}_T\cap \mathcal{D}_T$ from Theorem \ref{AdaLasso}. To show asymptotic sign consistency we verify that the conditions of Theorem \ref{AdaLasso} are valid asymptotically, i.e. $\beta_{\min}\geq 2\enVert[1]{\hat{\beta}-\beta^*}_{\ell_1}$ is valid asymptotically, (\ref{AdaLasso1}) and (\ref{AdaLasso2}) hold asymptotically, and that the probability of $\mathcal{B}_T\cap \mathcal{C}_T\cap \mathcal{D}_T$ tends to one. Now $\enVert[1]{\hat{\beta}-\beta^*}_{\ell_1}\leq \frac{16}{q\kappa^2}s\lambda_T$ by (\ref{REIQ2}) (which holds on $\mathcal{B}_T\cap \mathcal{C}_T$ as seen in the proof of Theorem \ref{Thm3}). Hence, since we have also seen that $\mathcal{B}_T\cap \mathcal{C}_T$ has probability one asymptotically, using that $\kappa$ is bounded away from zero, and $\beta_{\min}\in \Omega(\ln(T)a_T)$
\begin{align*}
\frac{\enVert[1]{\hat{\beta}_{i}-\beta_{i}^*}_{\ell_1}}{\beta_{\min}}
\in
O_p\del[3]{\frac{\ln(1+T)^{5/2}T^{7/2a+b-1/2}}{T^{2b}
T^{(15/2)a-1/2}\ln(T)^{2+5/2}}}
=
o_p(1)
\end{align*}
establishing that $\beta_{\min}\geq 2\enVert[1]{\hat{\beta}-\beta^*}_{\ell_1}$ with probability tending to one.

In order to verify the asymptotic validity of (\ref{AdaLasso1}) on $\mathcal{B}_T\cap \mathcal{C}_T\cap \mathcal{D}_T$ it suffices to show that (using $\enVert[1]{\hat{\beta}-\beta^*}_{\ell_1}\leq \frac{16}{q\kappa^2}s\lambda_T$)
\begin{align}
&\frac{s^2K_T\lambda_T}{\phi_{\min}(\Gamma_{J,J})\kappa^2}+\frac{s^2K_T\lambda_T}{\phi_{\min}(\Gamma_{J,J})\beta_{\min}\kappa^2}+\frac{s\lambda_T}{\kappa^2}
\rightarrow
0 \label{Aux1}
\end{align}
To this end, we show that each of the three terms tends to zero. Using that $\kappa^2$ and hence $\phi_{\min}(\Gamma_{J,J})$ are bounded away from 0  and $\sup_T\sigma_T<\infty$ one gets

\begin{align*}
\frac{s^2K_T\lambda_T}{\phi_{\min}(\Psi_{J,J})\kappa^2}
\in 
O\left(s^2K_T\lambda_T\right)
\subseteq
O\left(T^{2b}T^{(15/2)a-1/2}\ln(T)^{1+5/2}\right)=O(a_T)
\end{align*}
Because 
\begin{align*}
a_T^2=T^{15a+4b-1}\ln(T)^7\rightarrow 0
\end{align*}
it follows that the first term in (\ref{Aux1}) tends to zero. Since $\beta_{\min}\in \Omega(\ln(T)[a_T\vee b_T])$ and the second term equals the first term divided by $\beta_{\min}$ it follows that the second term also tends to zero. The third term tends to zero by (\ref{slambdaorder})
since $7a+2b\leq 15a+4b<1$. Next, it is shown that (\ref{AdaLasso2}) is valid asymptotically. To do so it suffices to show that
\begin{align}
\frac{\sqrt{s}\lambda_T}{\phi_{\min}(\Gamma_{J,J})\beta_{\min}}+\frac{\sqrt{s}\lambda_T}{\phi_{\min}(\Gamma_{J,J})\beta_{\min}^2}\rightarrow 0\label{Aux2}
\end{align}
We show that each of the two terms tends to zero. Note that since $\phi_{\min}(\Gamma_{J,J})$ is bounded away from 0 and $\beta_{\min}\in \Omega(\ln(T)a_T)$
\begin{align*}
\frac{s^{1/2}\lambda_T}{\phi_{\min}(\Psi_{J,J})\beta_{\min}}
\in 
O\left(\frac{s^{1/2}\lambda_T}{\ln(T)a_T}\right)
\subseteq 
O\left(\frac{T^{b/2}T^{(7/2)a-1/2}\ln(T)^{5/2}}{\ln(T)T^{2b}T^{(15/2)a-1/2}\ln(T)^{1+5/2}}\right)\subseteq o(1) 
\end{align*}
Regarding the second term in (\ref{Aux2}) it follows from $\beta_{\min}\in \Omega(\ln(T)b_T)$ that 
\begin{align*}
\frac{\sqrt{s}\lambda_T}{\phi_{\min}(\Gamma_{J,J})\beta_{\min}^2}
\in 
O\del[3]{\frac{T^{b/2}T^{(7/2)a-1/2}\ln(T)^{5/2}}{\ln(T)^2T^{b/2}T^{(7/2)a-1/2}\ln(T)^{5/2}}}\subseteq o(1)
\end{align*}

Finally, we see $\mathcal{B}_T\cap \mathcal{C}_T\cap\mathcal{D}_T$ has asymptotic measure one by Lemma \ref{CTasym} and (\ref{BTprob}) and (\ref{DTprob}).

\end{proof}

\begin{proof}[Proof of Theorem \ref{asymdist}]
As the results are equation by equation we shall focus on equation $i$ here but omit the subscript $i$ for brevity. The notation is as in the statement of Theorem \ref{AdaLASSOAsym}. Under the assumptions of that theorem, the adaptive LASSO is sign consistent. Hence, with probability tending to one, the estimates of the non-zero coefficients satisfy the first order condition
\begin{align*}
\frac{\partial L(\beta)}{\partial \beta_J}=-2X_J'(y-X\tilde{\beta})+2\lambda_Tg=0
\end{align*}
where $g=(\sgn(\beta^*_j)/|\hat{\beta}_j|)_{j\in J}$.
Using that $y=X_J\beta^*_J+\epsilon$ this is equivalent to
\begin{align*}
-2X_J'\left(\epsilon -X_J(\tilde{\beta}_J-\beta^*_J)-X_{J^c}\tilde{\beta}_{J^c}\right)+2\lambda_Tg=0
\end{align*}
and so, with probability tending to one, for any $s\times 1$ vector $\alpha$ with norm 1 one has
\begin{align}
&\sqrt{T}\alpha'(\tilde{\beta}_J-\beta^*_J)\notag\\
&=
\frac{1}{\sqrt{T}}\alpha'(\Psi_{J,J})^{-1}X_J'\epsilon-\sqrt{T}\alpha'(\Psi_{J,J})^{-1}\Psi_{J,J^c}\tilde{\beta}_{J^c}-\frac{\lambda_T}{\sqrt{T}}\alpha'(\Psi_{J,J})^{-1}g \label{OLSaux}
\end{align}
The first term on the right hand side in (\ref{OLSaux}) is recognized as $\sqrt{T}\alpha'(\hat{\beta}_{OLS}-\beta^*_J)$. Hence, to establish the theorem, it suffices to show that the second and the third term on the right hand side tend to zero in probability. Since $P(\tilde{\beta}_{J^c}=0)\rightarrow 1$ the second term vanishes in probability. Regarding the third term, notice that
\begin{align*}
\envert{\frac{\lambda_T}{\sqrt{T}}\alpha'(\Psi_{J,J})^{-1}g}
\leq
\frac{\lambda_T}{\sqrt{T}}\envert{\alpha'(\Psi_{J,J})^{-1}g}
\leq
\sqrt{\alpha'(\Psi_{J,J})^{-2}\alpha}\frac{\lambda_T}{\sqrt{T}}\sqrt{g'g}
\end{align*}
Now, on $\mathcal{C}_T$, which has probability tending to one, $\phi_{\min}(\Psi_{J,J})\geq q\phi_{\min}(\Gamma_{J,J})$ (by Lemma \ref{REbound}), so 
\begin{align*}
\alpha'\Psi_{J,J}^{-2}\alpha \leq \alpha'\alpha \phi_{\max}((\Psi_{J,J})^{-2})
=\alpha'\alpha/\phi_{\min}((\Psi_{J,J})^2)
\leq 
\alpha'\alpha/(q^2\phi_{\min}^2(\Gamma_{J,J}))
\leq 1/(q^2\tilde{c}^2)
\end{align*}
since $\alpha$ has norm one. Note that for all $j\in J$
\begin{align*}
|\hat{\beta}_j|
\geq 
|\beta_j^*|-|\hat{\beta}_j-\beta_j^*|
\geq
|\beta_{\min}|-\enVert[0]{\hat{\beta}-\beta^*}_{\ell_1}
\end{align*}
and so by subadditivity of $x\mapsto \sqrt{x}$
\begin{align*}
\frac{\lambda_T}{\sqrt{T}}\sqrt{g'g}
=
\frac{\lambda_T}{\sqrt{T}}\sqrt{\sum_{j\in J}\frac{1}{\hat{\beta}_j^2}}
\leq
\frac{\lambda_T}{\sqrt{T}}\frac{s}{|\beta_{\min}|-\enVert[0]{\hat{\beta}-\beta^*}_{\ell_1}}
=
\frac{s\lambda_T}{\sqrt{T}|\beta_{\min}|}\frac{1}{1-\enVert[0]{\hat{\beta}-\beta^*}_{\ell_1}/|\beta_{\min}|}
\end{align*}
Since $\kappa \geq \tilde{c}$ it follows from (\ref{REIQ2}) that $\enVert[0]{\hat{\beta}-\beta^*}_{\ell_1}\in O_p(s\lambda_T)$. But $s\lambda_T\in O(\ln(1+T)^{5/2}T^{{(7/2)}a+b-1/2})$ by (\ref{slambdaorder}). Hence, $\enVert[0]{\hat{\beta}-\beta^*}_{\ell_1}\in O_p(\ln(1+T)^{5/2}T^{{(7/2)}a+b-1/2})$ and so, since $\beta_{\min}\in\Omega(\ln(T)[a_T\vee b_T])\subseteq \Omega(\ln(T)a_T)=\Omega(T^{2b}T^{(15/2)a-1/2}\ln(T)^{2+5/2})$, 
\begin{align*}
\frac{\enVert[0]{\hat{\beta}-\beta^*}_{\ell_1}}{|\beta_{\min}|}
\in 
O_p\left(\frac{\ln(1+T)^{5/2}T^{{(7/2)}a+b-1/2}}{T^{2b}T^{(15/2)a-1/2}\ln(T)^{2+5/2}}\right)=O_p\left(\frac{\ln(1+T)^{5/2}}{\ln(T)^{2+5/2}}T^{-4a-b}\right)
\end{align*}
Since $\frac{\ln(1+T)^{5/2}}{\ln(T)^{2+5/2}}T^{-4a-b}\rightarrow 0$ it follows that $\frac{\enVert[0]{\hat{\beta}-\beta^*}_{\ell_1}}{\beta_{\min}}\in o_p(1)$. Also, $15a+4b<1$ is more than sufficient for (\ref{slambdaorder}) to yield that $s\lambda_T\rightarrow 0$ and $\beta_{\min}\in \Omega\left(\ln(T)([b_T\vee c_T]\right)\subseteq \Omega\left(\ln(T)T^{-1/4}\right)$ implies that $\sqrt{T}\beta_{\min}\rightarrow\infty$ and the theorem follows. 
\end{proof}

\begin{proof}[Proof of Corollary \ref{corrate}]
As the results are equation by equation we shall focus on equation $i$ here but omit the subscript $i$ for brevity. Let $\epsilon>0$ be given. Then, 
\begin{align*}
\cbr{\sup_{\alpha:\enVert[0]{\alpha}\leq 1}\sqrt{T}\envert[1]{\alpha'(\tilde{\beta}_J-\beta_{OLS})}<\epsilon}
&\subseteq 
\bigcap_{j\in J}\cbr{\sqrt{T}\envert[1]{\tilde{\beta}_j-\beta_{OLS,j}}<\epsilon}\\
&\subseteq
\cbr{\sqrt{T}\enVert[1]{(\tilde{\beta}_J-\beta_{OLS})}_{\ell_1}<s\epsilon}\\
&=\cbr{\frac{\sqrt{T}}{s}\enVert[1]{(\tilde{\beta}_J-\beta_{OLS})}_{\ell_1}<\epsilon}
\end{align*}
And so
\begin{align*}
P\left(\frac{\sqrt{T}}{s}\enVert[1]{(\tilde{\beta}_J-\beta_{OLS})}_{\ell_1}<\epsilon\right)
\geq
P\left(\sup_{\alpha:\enVert[0]{\alpha}\leq 1}\sqrt{T}\envert[1]{\alpha'(\tilde{\beta}_J-\beta_{OLS})}<\epsilon\right)
\rightarrow 1
\end{align*}
by Theorem \ref{asymdist}. Since $\epsilon>0$ was arbitrary, this proves that $\enVert[1]{(\tilde{\beta}_J-\beta_{OLS})}_{\ell_1}\in o_p\left(\frac{s}{\sqrt{T}}\right)$.
Hence, by the triangle inequality and Lemma \ref{OLSO}
\begin{align*}
\enVert[0]{\tilde{\beta}_J-\beta_{J}^*}_{\ell_1}
\leq
\enVert[0]{\tilde{\beta}_J-\beta_{OLS}}_{\ell_1}+\enVert[0]{\tilde{\beta}_{OLS}-\beta_{J}^*}_{\ell_1} \in O_p(\tilde{\lambda}_Ts)
\end{align*}
since $\tilde{\lambda}_Ts>\frac{s}{\sqrt{T}}$.
\end{proof}

\bibliographystyle{chicagoa}	
\bibliography{references}		

\end{document}